\documentclass[12pt,a4paper,twoside]{article}
\usepackage{cite}
\usepackage{comment}
\usepackage{amsmath,bm}
\usepackage{amscd}
\usepackage{amssymb}
\usepackage[pdftex]{color,graphicx,hyperref}
\usepackage{enumitem}
\usepackage[utf8]{inputenc}
\usepackage{latexsym}
\usepackage{cases}
\usepackage{xfrac}
\usepackage[normalem]{ulem}
\usepackage{environ}
\usepackage{xcolor}
\usepackage{mathtools}
\usepackage{stackengine}
\usepackage{geometry}
\usepackage{esint}
\usepackage[T1]{fontenc}
\usepackage[english]{babel}
\usepackage{tikz}
\usetikzlibrary{positioning}
\usepackage{framed}
\usepackage{fancyhdr}

\usepackage{color}

\newcommand{\Div}{\operatorname{div}}

\newcommand{\Ov}[1]{\overline{#1}}

\newcommand{\vr}{\varrho}

\newcommand{\vu}{\vc{u}}

\newcommand{\vv}{\vc{v}}

\newcommand{\vc}[1]{{\bf #1}}

\newcommand{\Grad}{\nabla}

\newcommand{\dx}{{\,\rm d} {x}}

\newcommand{\dt}{{\,\rm d} t }

\newcommand{\lr}[1]{\left( #1 \right)}

\newcommand{\dd}{\delta}
\newcommand{\R}{\mathbb{R}}
\newcommand{\Erel}{\mathcal{E}_{\mathrm{rel}}}
\newcommand{\Cv}{C_{\vv}}
\newtheorem{thm}{Theorem}
\newtheorem{lemma}[thm]{Lemma}
\newtheorem{prop}[thm]{Proposition}
\newtheorem{df}{Definition}
\newtheorem{rmk}{Remark}

\numberwithin{equation}{section}

\renewcommand{\dd}{{\rm d}}

\newcommand{\T}{\mathbb{T}}

\newcommand{\Om}{\Omega}

\newcommand{\vm}{\vc{m}}

\newcommand{\dz}{{\rm d} z}

\newcommand{\Gradx}{\nabla_x}
\newcommand{\Gradxz}{\nabla_{x,z}}
\newcommand{\Divx}{\operatorname{div}_x}
\newcommand{\Lambdaz}{\Lambda_0}

\newcommand{\tcbb}{\textcolor{black!30!blue}}

\begin{document}
	
	\title{Generalized solution and Weak-Strong uniqueness for a barotropic Euler-Riesz system}

	\author{Nilasis Chaudhuri\footnote{\texttt{nchaudhuri@mimuw.edu.pl}}}
	\date{}     
	
	%
	%
	%
	%
	%
	%
	%
	
	\maketitle
	\vspace{-5mm}
	{
		\small
		\centerline{Institute of Applied Mathematics and Mechanics, University of Warsaw,}
		\centerline{ul. Banacha 2 -- 02-097 Warsaw, Poland}
	}

	\begin{abstract}
We study the Euler--Riesz system on the torus $\mathbb T^d$, $d=2,3$: the compressible Euler
equations with barotropic pressure $p(\varrho)=a\varrho^\gamma$ ($\gamma>1$, $a>0$), coupled to a
repulsive nonlocal force ($\approx \varrho \Grad_x K \ast \varrho$) with Riesz kernel
$K(x)\propto|x|^{\beta-d}$ of order $\beta\in(0,2)$. Since $K$ is the kernel of the inverse
fractional Laplacian $(-\Delta)^{-\beta/2}$, we recast the force through the Caffarelli--Silvestre
extension as the trace of a local stress tensor, replacing the nonlocal interaction by a local
identity in one extra variable. For a repulsive kernel the total energy is coercive, and we use
this to introduce a notion of global-in-time \emph{dissipative solution} for arbitrarily large finite-energy
data. Our main result is weak (measure-valued)--strong uniqueness, for every order
$\beta\in(0,2)$ and every $\gamma>1$ independently: on any interval on which a strong solution
exists, every dissipative solution with the same initial data coincides with it and all defects
vanish. The proof rests on a suitable adaptation of relative energy.
	\end{abstract}

	{\bf Keywords:} Euler-Riesz system, Generalized solution, Defect measures, Weak-Strong uniqueness, Caffarelli-Silvestre extension \\
	
	{\bf AMS classification:} Primary: 35Q31; Secondary: 35A35, 35B30, 35R11, 76N10;
	
	\tableofcontents

	\section{Introduction}
	
We consider the isentropic Euler--Riesz system in the space-time domain
$(0,T)\times\mathbb T^d$, $\mathbb T^d=(\mathbb R/2\pi\mathbb Z)^d$: 
\begin{align}
	& \partial_t \vr + \Div_x( \vr \vu ) = 0, \label{eq:ERc:1}\\
	& \partial_t(\vr \vu) + \Div_x (\vr \vu \otimes \vu ) + \Grad_x p(\vr) + \vr \Grad_x \Phi = 0,\label{eq:ERm:1}
\end{align}
where $\vr(t,x)\ge0$ is the fluid density, $\vu(t,x)$ the velocity, and the pressure obeys
the isentropic law $p(\vr)=a\vr^\gamma$ with $a>0$, $\gamma>1$. The forcing $\vr\Grad_x\Phi$
is driven by a potential $\Phi$ determined by a kernel $K$ via $\Phi=K\ast\vr$ (made precise
below after neutralizing the background).

\medskip 

\noindent\textbf{Riesz kernel.}
In general $K$ is, up to sign, the classical \emph{Riesz kernel of order $\beta$},
\begin{equation}\label{riesz-kernel}
	K(x) = \pm\, C_{d,\beta}\, |x|^{\beta - d}, \qquad 0 < \beta < d,
\end{equation}
for a normalizing constant $C_{d,\beta}>0$, so that $K$ agrees (up to sign) with the kernel of
$(-\Delta)^{-\beta/2}$ on $\mathbb R^d$. We record the singularity exponent
\begin{equation}\label{alpha-beta}
	\alpha := d - \beta,
\end{equation}
so that \eqref{riesz-kernel} reads $K(x)=\pm C_{d,\beta}|x|^{-\alpha}$. The sign encodes the
nature of the interaction: $K=+C_{d,\beta}|x|^{-\alpha}$ is \emph{repulsive},
$K=-C_{d,\beta}|x|^{-\alpha}$ \emph{attractive}. We are interested in the singular regime of
$\alpha$, in which the coupling between \eqref{eq:ERc:1}--\eqref{eq:ERm:1} and $\Phi$ is most
delicate.

On $\mathbb T^d$ the total mass $M=\int_{\mathbb T^d}\vr\,dx$ is conserved; we neutralize the
uniform background so that the convolution is well defined on the torus, setting
\begin{equation*}
	\Phi = K \ast (\vr - \bar{\vr}), \qquad \bar{\vr} = |\mathbb{T}^d|^{-1} M .
\end{equation*}
This convolution is intimately linked to the inverse fractional Laplacian. From now on we
restrict to the fractional regime $\beta = 2s$ with $s\in(0,1)$, i.e.\ $\beta\in(0,2)$, so that
$(-\Delta)^{-\beta/2} = (-\Delta)^{-s}$. Absorbing the normalizing constant $C_{d,\beta}$ into
$\Phi$, we take $K$ to be the kernel of $(-\Delta)^{-s}$,\footnote{On $\mathbb T^d$ the
	normalization is $\widehat{K\ast f}(k)=|k|^{-2s}\hat f(k)$ for $k\neq0$.} so that the Riesz
potential solves the fractional Poisson equation
\begin{equation*}
	(-\Delta)^s \Phi = \vr - \bar{\vr} \quad \text{on } \mathbb{T}^d,
\end{equation*}
where $\alpha = d - 2s$ by \eqref{alpha-beta}. Thus, the system \eqref{eq:ERc}-\eqref{eq:ERr} with repulsive Riesz kernel \eqref{riesz-kernel} now reads as
	\begin{align}
		& \partial_t \vr + \Div_x( \vr \vu ) = 0, \label{eq:ERc}\\
		& \partial_t(\vr \vu) + \Div_x (\vr \vu \otimes \vu ) + \Grad_x p(\vr) + \vr \Grad_x \Phi = 0,\label{eq:ERm} \\
		&(-\Delta)^s \Phi = \vr - \bar{\vr}. \label{eq:ERr}
	\end{align}
	The system is supplemented with initial data 
	\begin{align}\label{id}
		\vr(0,\cdot) = \vr_0 (\cdot) \text{ and } \mathbf{m}(0,\cdot)=(\vr \vu) (0,\cdot) =(\vr \vu)_0 \text{ in } \T^d,
	\end{align}
	where we denote momentum by $\vm=\vr \vu$.
	
	The Euler--Riesz system \eqref{eq:ERc}--\eqref{eq:ERm} describes a compressible fluid whose particles interact through a singular Riesz kernel, and provides a unified framework encompassing several classical models of mathematical physics. The Coulomb case in $d=3$, where $\alpha=d-2=1$ and $K\propto|x|^{-1}$ is the Newtonian
	kernel, corresponds to the endpoint $s=1$ (formal, since our analysis is carried out for
	$s\in(0,1)$) and recovers the Euler--Poisson system,, which models both self-gravitating gaseous stars in the \emph{attractive} case $K = -C_{d}|x|^{-1}$ and charged plasmas or electron gases in the \emph{repulsive} case $K = +C_{d}|x|^{-1}$; the general Riesz exponent further captures mean-field descriptions of large interacting particle systems. It thus serves as a canonical model for the interplay between compressible transport, pressure, and nonlocal interaction forces.
	
	\medskip
	The sign of the kernel governs the variational structure of the system. We put our focus on the \emph{repulsive} case $K = +C_{d,\beta}|x|^{-\alpha}$, for which the system possesses a coercive energy. Indeed, a formal computation shows that smooth solutions conserve the total energy
	\begin{equation}\label{eq:energy}
		\frac{d}{dt}\, \mathcal{E}(t) = 0, \qquad
		\mathcal{E}(t) := \int_{\mathbb{T}^d} \left( \frac{1}{2}\vr|\vu|^2 + P(\vr) \right) dx
		+ \frac{1}{2}\int_{\mathbb{T}^d} (\vr-\bar\vr)\, K\ast(\vr-\bar\vr)\,dx,
	\end{equation}
	where $P(\vr) = \tfrac{a}{\gamma-1}\vr^\gamma$ is the pressure potential. The derivation of this in the formal level is quite standard, but for self containment, we give a derivation in the Appendix \ref{sec:eng-formal}. \par
	 In the repulsive case the interaction term is nonegative, and it is given by
	\begin{equation*}
		\frac{1}{2}\int_{\mathbb{T}^d} (\vr-\bar\vr)\, K\ast(\vr-\bar\vr)\,dx
		= \frac{1}{2}\, \big\| (-\Delta)^{s/2}\Phi \big\|_{L^2(\mathbb{T}^d)}^2 \ge 0.
	\end{equation*}
	Therefore, $\mathcal{E}(t)$ in \eqref{eq:energy} controls all three contributions -- kinetic, internal, and interaction -- as a genuinely positive energy, providing the natural a priori estimate for the analysis.
	\medskip
	By contrast, in the \emph{attractive} (self-gravitating) case $K = -C_{d,\alpha}|x|^{-\alpha}$ the interaction term in \eqref{eq:energy} carries the opposite sign,
	\begin{equation*}
		\frac{1}{2}\int_{\mathbb{T}^d} (\vr-\bar\vr)\, K\ast(\vr-\bar\vr)\,dx
		= -\frac{1}{2}\, \big\| (-\Delta)^{s/2}\Phi \big\|_{L^2(\mathbb{T}^d)}^2 \le 0.
	\end{equation*}
	Keeping our focus on repulsive case, goal is to the study of \emph{generalized solutions} -- weak and measure-valued -- of the Euler--Riesz system \eqref{eq:ERc}--\eqref{eq:ERr}. The main difficulty is already visible in the nonlocal force $\vr\Grad_x\Phi$: it is a product of
	two quantities, and the bounds coming from the energy are, on their own, not enough to make sense
	of this product\footnote{The energy controls $\Phi$ only in $\dot H^{s}$, so the field
		$\Grad_x\Phi$ lies in $\dot H^{s-1}$; for $s<1$ this is a space of distributions, and the pointwise
		product $\vr\,\Grad_x\Phi$ cannot be defined by integrability of $\vr$ alone.}. Any theory of
	generalized solutions therefore has to find some extra structure that turns this product into
	something one can actually control.
	\medskip
	
	The cleanest place to see what such a structure looks like is the Coulomb case
	$\alpha=d-2$ ($s=1$, $\beta=2$), where \eqref{eq:ERr} becomes the classical Poisson equation
	and the system reduces to the Euler--Poisson system
	\begin{align}
		& \partial_t \vr + \Div_x( \vr \vu ) = 0, \label{eq:EPc-final}\\
		& \partial_t(\vr \vu) + \Div_x (\vr \vu \otimes \vu ) + \Grad_x p(\vr) + \vr \Grad_x \Phi = 0,\label{eq:EPm-final} \\
		& -\Delta \Phi = \vr - \bar{\vr}, \label{eq:EPp}
	\end{align}
	with $\Phi=K\ast(\vr-\bar\vr)$ the Newtonian potential: $K=C_{3}\,|x|^{-1}$ in $d=3$, while
	in $d=2$ the kernel is logarithmic, $K=-\tfrac{1}{2\pi}\log|x|$ (up to a smooth periodic
	correction on $\mathbb T^d$). \par 
	For small, smooth, irrotational data the Euler--Poisson
	system is globally well posed in the \emph{repulsive} (plasma) regime, where the electric
	field is dispersive: Guo~\cite{G1998} for electron dynamics and Guo--Pausader~\cite{GP2011}
	for ion dynamics. For large data, by contrast, smooth solutions may form shocks in finite
	time. At the weak level the siutuatoion is bit delicate. Adapting the
	convex-integration method of De Lellis and Sz\'ekelyhidi~\cite{DS2009}, Feireisl~\cite{F2016b}
	showed that the Euler--Poisson system admits infinitely many weak solutions satisfying the energy inequality in $\R^d$ with $d=2,3$. Global finite-energy solutions with large spherically symmetric data, for both gaseous stars and plasmas, were constructed by Chen, He, Wang and Yuan~\cite{CHWY2024} as the inviscid limit of Navier--Stokes--Poisson approximations.

	For Euler--Poison system(\eqref{eq:EPc-final}, \eqref{eq:EPm-final} and \eqref{eq:EPp}), using $-\Delta\Phi = \vr-\bar\vr$ one key feature is the following pointwise identity:
	\begin{equation}\label{ms-P}
		(\vr-\bar\vr)\,\Grad_x\Phi
		= \Div_x\!\left( \tfrac12 |\Grad_x\Phi|^2\,\mathbb{I}_d - \Grad_x\Phi\otimes\Grad_x\Phi \right):=-\Div_x \mathbb{T}_{c}[\Grad_x \Phi],
	\end{equation}
	that makes the nonlocal force is a pure divergence form and at the level of the energy it holds
	\begin{align*}
		\tfrac12\int_{\mathbb T^d}(\vr-\bar\vr)\Phi \dx =\tfrac12\int_{\mathbb T^d}|\Grad_x\Phi|^2 \dx .
	\end{align*}
	The identity \eqref{ms-P}, however, holds in either
	dimension ($d=2 \text{ or } 3$), since it uses only \eqref{eq:EPp}.
	The gain from \eqref{ms-P} is that the force no longer needs joint control of $\vr$ and $\Grad_x\Phi$: as a
	divergence tested against a velocity field it becomes
	$\int_{\mathbb T^d}\mathbb T_c[\Phi]:\Grad_x\pmb\psi\,dx$, bounded by
	$\|\Grad_x\pmb\psi\|_{L^\infty}$ times the field energy, with no derivative on $\Phi$.
	Building on this identity, Carrillo et al. ~\cite{CDGS2024} carried the
	measure-valued framework for pressureless Euler--Poisson.	For $0<s<1$ no such pointwise identity holds on $\mathbb T^d$: the force is genuinely nonlocal
	and is not the divergence of any local stress built from $\Phi$.
	\medskip

For the compressible Euler--Riesz system, strong solutions have been studied by Choi and
Jeong~\cite{CJ2022}, Choi, Jung and Lee~\cite{CJL2025}, Danchin and Ducomet~\cite{DD2022}, and
Danchin and Mucha~\cite{DM2024} for general nonlocal interaction potentials. Following the
compensated-integrability framework of Serre~\cite{S2019}, Alves, Grafakos and
Tzavaras~\cite{AlGT2026} write the nonlocal force as a divergence-form tensor and analyze it
through a bilinear fractional integral operator. In the large-data regime,
Carrillo et al.~\cite{CCCY2025} recently established global existence and nonlinear stability of
finite-energy solutions with spherically symmetric initial data, for both attractive and
repulsive Riesz potentials, via a vanishing-viscosity limit. \par

To our knowledge, existence and uniqueness of measure-valued or dissipative solutions has not yet
been addressed. The obstruction is structural: for the fractional operator $(-\Delta)^s$ with
$s\in(0,1)$ there is no pointwise analogue of the identity \eqref{ms-P} rendering the nonlocal
force a local divergence, as in the Euler--Poisson case. This is precisely the gap the present
work fills: the Caffarelli--Silvestre extension supplies the missing divergence structure, at the
cost of one extra variable. \par

	Instead of the nonlocal stress tensor of \cite{AlGT2026}, we use the Caffarelli--Silvestre
	extension~\cite{CaS2007}\footnote{The realization of $(-\Delta)^s$ as a Dirichlet-to-Neumann
		operator for a degenerate local problem goes back to Molchanov and Ostrovskii~\cite{MO1969} and
		Spitzer~\cite{S1958} in the probabilistic setting; we use the analytic formulation and
		regularity theory of Caffarelli and Silvestre~\cite{CaS2007}.}, by now a standard device for the
	fractional Laplacian. One adds a variable $z\in\mathbb R_+$ and lifts $\Phi$ to $U(t,x,z)$ on the half-cylinder $\Omega=\mathbb T^d\times\mathbb R_+$, where $U$ is the unique decaying solution of
	\begin{equation}\label{CS-ext}
		\begin{cases}
			\Div_{x,z}\!\left( z^{1-2s}\,\Grad_{x,z} U \right) = 0 & \text{in } \Omega,\\[2pt]
			-c_{d,s}\,\displaystyle\lim_{z\to 0} z^{1-2s}\,\partial_z U = \vr - \bar\vr & \text{on } \mathbb{T}^d\times\{0\},
		\end{cases}
		\qquad \lim_{z\to\infty} U(x,z) = 0,
	\end{equation}
	where $c_{d,s}>0$ with explicit value in \eqref{constCDS}.
	The equation in $\Omega$ is local: the nonlocal operator $(-\Delta)^s$ on the boundary is replaced by a weighted divergence-form equation in the extended space $\Omega$, and the datum $\vr-\bar\vr$ becomes the weighted normal flux of $U$ at $z=0$. Since $U|_{z=0}=\Phi$, formally we have $\Grad_x\Phi=\Grad_x U|_{z=0}$, and the momentum equation \eqref{eq:ERm} reads
	\begin{equation}\label{eq:ERm-ext}
		\partial_t(\vr\vu) + \Div_x(\vr\vu\otimes\vu) + \Grad_x p(\vr) + \vr\,\Grad_x U|_{z=0} = 0 .
	\end{equation}
	Testing \eqref{CS-ext} with $U$ itself shows that the Dirichlet energy of the extension is the interaction energy of the system: 
	\begin{align*}
		\tfrac{c_{d,s}}2\iint_\Omega z^{1-2s}|\Grad_{x,z}U|^2 \dx \, \dz = \tfrac12\int_{\mathbb T^d}(\vr-\bar\vr)\Phi \dx  = \tfrac12\|\vr-\bar\vr\|_{\dot H^{-s}}^2. 
	\end{align*}

	As a solution of the extension equation $\Div_{x,z}(z^{1-2s}\Grad_{x,z}U)=0$, $U$ carries a
	stress tensor in $\Omega$, namely the $(d+1)\times(d+1)$ tensor
	\begin{equation}\label{Tfull}
		\mathbb{T}_{\mathrm{ext}}[U] = z^{1-2s}\,\Grad_{x,z} U \otimes \Grad_{x,z} U -
		\tfrac12\, z^{1-2s}\, |\Grad_{x,z} U|^2\, \mathbb{I}_{d+1} .
	\end{equation}
	A detailed computation from \eqref{CS-ext}, carried out in Section~\ref{sec:stress}
	(see \eqref{div-Tfull}), shows that the $d$ horizontal rows of $\mathbb T_{\mathrm{ext}}[U]$
	are divergence-free, while the $z$-row is not unless $s=\tfrac12$; the obstruction is the
	$z$-dependence of the weight.
	
	The momentum equation is posed on $\mathbb T^d$ and we work with generalized solutions, so the test functions are $\pmb\psi=\pmb\psi(t,x)$. Extended to $\Omega$ they are horizontal and
	$z$-independent, $\pmb\Psi_{\mathrm{ext}}(t,x,z)=(\pmb\psi(t,x),0)^T$. Testing \eqref{Tfull} with $\Grad_{x,z}\pmb\Psi_{\mathrm{ext}}$, only the upper-left $d\times d$
	block is paired, and \eqref{Tfull} reduces to
	\begin{equation}\label{T}
		\mathbb{T}_{\rm ER}[U] = z^{1-2s}\left( \Grad_x U \otimes \Grad_x U - \tfrac12\,|\Grad_{x,z} U|^2\,\mathbb{I}_d \right).
	\end{equation}
	Note that $\mathbb{T}_{\rm ER}[U]$ retains the normal derivative through its trace --- the
	off-diagonal part sees only $\Grad_x U$, the trace the full $\Grad_{x,z}U$ --- so it is not
	the stress of a $d$-dimensional problem. Integrating by parts in $\Omega$ against the
	horizontal rows and using the flux condition in \eqref{CS-ext} yields the identity we are
	after
	\begin{equation}\label{eq:nonlocMS-intro}
		c_{d,s} \iint_{\Omega} \mathbb{T}_{\rm ER}[U] : \Grad_x \pmb{\psi} \,dx\,dz
		= \int_{\mathbb{T}^d} (\vr-\bar\vr)\,\Grad_x U|_{z=0} \cdot \pmb{\psi} \,dx ,
	\end{equation}
	the mean $\bar\vr$ contributing the extra term $ -\bar\vr\int_{\mathbb T^d}\Phi\,\Div_x\pmb\psi \dx $, which is harmless. The details are in Section~\ref{sec:stress}. The gain is that the nonlocal force no longer appears in the energy estimates through $\Grad_x\Phi$, but only through $\mathbb T_{\rm ER}[U]$ paired with $\Grad_x\pmb\psi$. Since $|\mathbb T_{\rm ER}[U]|\le z^{1-2s}|\Grad_{x,z}U|^2$ pointwise, such a term is controlled by $\|\Grad_x\pmb\psi\|_{L^\infty}$ times the interaction energy. Equation \eqref{eq:nonlocMS-intro} is the fundamental structural identity underlying the entire paper.

	\medskip
	This strategy is also motivated by the work of Serfaty and collaborators (see, \cite{Serfaty2020}, \cite{NRS2022}), Duerinckx \cite{Du2016}, Choi and Jung \cite{CJ2024b} and references therein, where the \emph{modulated energy} method is used to derive mean-field (and, in some regimes, fluid) limits for large systems of particles interacting through Coulomb and Riesz kernels. A central device there is a stress-tensor reformulation of the singular interaction energy, which for Riesz kernels is handled through the very same Caffarelli--Silvestre extension: the extension localizes the singular energy and supplies the coercivity needed to control the modulated energy along the limit. We borrow this extension viewpoint, and adapt it here to the question of generalized solutions and weak--strong uniqueness for the Euler--Riesz system.\par 
	\medskip

Following the work of DiPerna~\cite{D1985} and DiPerna and Majda~\cite{DM1987}, a theory of
measure-valued solutions has been developed for compressible fluids and related models (see
\cite{BDS11, FjMT2012, BF2018a, Bd2020, FL2018, CGZ2023}). In this this article is to
provide a concept of generalized solution based on \eqref{T} and \eqref{eq:nonlocMS-intro}.
Rather than defining it through Young measures, we work with the notion of a \emph{dissipative
	solution}, introduced for the compressible Euler system by Breit, Feireisl and
Hofmanov\'a~\cite{BeFH2020d} and by Feireisl, Luk\'a\v{c}ov\'a-Medvi\v{d}ov\'a and
Mizerov\'a~\cite{FLM2020i}. For compressible Euler the core idea rests on the convexity and
weak lower semicontinuity of the energy in the conservative variables: $(\vr,\vm)\longmapsto \frac{1}{2}\frac{|\vm|^2}{\vr}+P(\vr). $
Along a sequence bounded only in energy, the weak formulation and the energy inequality then
acquire \emph{defect measures} --- correction terms that absorb the concentrations and oscillations the nonlinear terms may develop in the limit, appearing in both the momentum balance and the energy inequality. Crucially, the defect in the momentum equation is controlled by the defect in the energy inequality that is retained as part of the definition. In our setting the stress \eqref{T} is not sign-definite, so its defect measures carry no sign. What replaces the sign is domination: the momentum defect stays controlled by the energy defect, and this is precisely what makes the dissipative formulation available for the fractional system.
	\medskip
	
Two fundamental properties justify the introduction of a generalized notion of solution,
both of which we establish in this work:
\begin{enumerate}[leftmargin=*]
	\item[(i)] \emph{Compatibility.} Every sufficiently smooth dissipative
	solution is a classical solution; 
	\item[(ii)] \emph{Weak--strong uniqueness.} Given the same initial data, a dissipative
	solution coincides with the strong solution for as long as the latter exists; 
\end{enumerate}
The proof of weak--strong uniqueness relies on the relative energy method, which
originated in the celebrated work of Dafermos~\cite{D1979} on hyperbolic conservation
laws and has since become the standard route to weak--strong uniqueness across fluid
models: for the incompressible Euler system at the measure-valued level by Brenier,
De Lellis and Sz\'ekelyhidi~\cite{BDS11}, and for the compressible Navier--Stokes and
Euler systems and their measure-valued and dissipative formulations by Feireisl and
collaborators~\cite{FJN2012, FNS2011, FPAW2016, C2024, BF2018a}. For a complete picture
we refer to the survey of Wiedemann~\cite{W2018} and the references therein. \par 
Closest to the present setting are the works of Alves and collaborators~\cite{AlCC2024,
		Al2024, AlGT2026} and Lattanzio and Tzavaras~\cite{LT2017}, who establish weak--strong
	uniqueness for Euler--Riesz systems with nonlocal interaction, including the Euler--Poisson
	case, at the level of weak solutions (assuming the existence of a weak solution) and
	$\gamma\ge 2-\tfrac{\beta}{d}$ (respectively $\gamma\ge 2-\tfrac{2}{d}$ in the Poisson case) --- via Hardy--Littlewood--Sobolev inequalities on $\mathbb R^d$ for $0<\beta <d$.
We highlight that, Alves et al.~\cite{AlGT2026} obtain weak--strong uniqueness for over the full Riesz range $\beta\in(0,d)$, where as in this particular extension approach restricts us for $\beta=2s\in(0,2)$, covering the whole range of $\gamma$ (See Remark \ref{compref}). For pressureless Euler--Poisson we further refer to Carrillo et al.~\cite{CDGS2024}; in the
absence of pressure their result yields a (partial) weak--strong uniqueness principle, and it is
their dissipative approach that we adapt to the present fractional setting. \par 


Accordingly, to establish weak--strong uniqueness we work
in the extended variable, with relative energy
\begin{equation}\label{RE:Def}
	\Erel(\tau)
	:= \int_{\mathbb T^d}\Big( \tfrac12\,\vr\, \Big|\frac{\vm}{\vr} - \vv\Big|^2
	+ P(\vr\,|\,r) \Big)(\tau)\,dx
	\;+\; \frac{c_{d,s}}{2}\iint_\Omega z^{1-2s}\,\big|\Gradxz (U-V)\big|^2(\tau)\,dx\,dz,
\end{equation}
where $(\vr,\vm,U)$ denotes the weak (generalized) solution and $(r,\vv,V)$ the strong solution.

\paragraph{Plan of the article.}
After fixing notation, we introduce in Section~\ref{sec:defn} the notion of dissipative weak solution and state the main result. Section~\ref{sec:justification} justifies the definition: we derive the stress tensor through which the nonlocal force is expressed, and show that the defect measures arise as the limits of a consistent approximation. Section~\ref{sec:relative} is devoted to the weak--strong uniqueness principle, via a relative energy inequality. 
 
\subsection{Notation}	\label{sec:notation}

We work on the flat torus $\mathbb T^d$, $d\ge1$. For $1\le p\le\infty$ and $k\in\mathbb N$, $L^p(\mathbb T^d)$, $W^{k,p}(\mathbb T^d)$, $H^k(\mathbb T^d)$ denote the usual Lebesgue and Sobolev spaces, $C^k(\mathbb T^d)$ the $k$-times continuously differentiable functions, and $C_c^\infty$ the smooth compactly supported ones; homogeneous fractional spaces $\dot H^\sigma(\mathbb T^d)$ with $0<\sigma <1$ are defined in \eqref{frac-sobolev} below. For a Banach space $X$  and $1\leq p\leq \infty$, we write $L^p_{t,X}=L^p(0,T;X)$, and abbreviate
$L^p_x=L^p(\mathbb T^d)$, $L^p_{t,x}=L^p((0,T)\times\mathbb T^d)$,
$C^k_{t,x}=C^k([0,T]\times\mathbb T^d)$, with subscripts indicating the variables.

 We write $A:B=A_{jk}B_{jk}$ for the Frobenius product of matrices. For $A\in\mathbb R^{d\times d}$, $\|A\|_{\mathrm{op}}:=\sup_{|\xi|=1}|A\xi|$ is the operator
(spectral) norm; for symmetric $A$ it equals $\max_i|\lambda_i(A)|$,
the largest eigenvalue in modulus.
For $\mathbf{a},\mathbf{b}\in\mathbb R^d$, $\mathbf{a}\otimes \mathbf{b}\in\mathbb R^{d\times d}$ is the matrix $(a\otimes b)_{jk}=a_j b_k$; in particular $A:(\xi\otimes\xi)=A\xi\cdot\xi$ for symmetric $A$ and $\xi \in \R^d$.

\subsubsection*{Scaler and Matrix valued Radon Measures}

$\mathcal M(\mathbb T^d)$ is the space of finite signed Radon measures --- the dual of $C(\mathbb T^d)$ --- with the total variation norm; $\mathcal M(\mathbb T^d;\mathbb R^d)$ and $\mathcal M(\mathbb T^d;\mathbb R^{d\times d}_{\mathrm{sym}})$ are the vector- and symmetric-matrix-valued analogues. We use two notions of non-negativity, and distinguish them by the ordering symbol:
\begin{itemize}
	\item $\mathcal M^+(\mathbb T^d)$ is the cone of non-negative scalar measures, $\mu\ge0$;
	\item $\mathcal M^+(\mathbb T^d;\mathbb R^{d\times d}_{\mathrm{sym}})$ is the cone of \emph{positive semidefinite} matrix-valued measures, i.e.\ $\mathcal N$ with $\mathcal N:(\xi\otimes\xi)\in\mathcal M^+(\mathbb T^d)$ for every $\xi\in\mathbb R^d$; we write $\mathcal N\succeq0$.
\end{itemize}

For $T>0$, $L^\infty_{\mathrm{weak}\text{-}*}(0,T;\mathcal M(\mathbb T^d))$ denotes the essentially bounded weak-$*$ measurable maps $(0,T)\to\mathcal M(\mathbb T^d)$; since $C(\mathbb T^d)$ is separable, this is the dual of $L^1(0,T;C(\mathbb T^d))$. By definition of $L^\infty_{\mathrm{weak}\text{-}*}\big(0,T;\mathcal M(\overline\Omega)\big)$, any
	$\boldsymbol\mu$ in this space is represented by a weakly-$*$ measurable family
	$\{\mu(t)\}_{t\in(0,T)}\subset\mathcal M(\overline\Omega)$ i.e.\ $t\mapsto\langle\mu(t),\varphi\rangle$
	is measurable for every $\varphi\in C(\overline\Omega)$ with $ \operatorname*{ess\,sup}_{t\in(0,T)}\|\mu(t)\|_{\mathcal M(\overline\Omega)}<\infty ,$ and acts on $C_c((0,T)\times\overline\Omega)$ by $ \langle\boldsymbol\mu,\Phi\rangle=\int_0^T\!\!\int_{\overline\Omega}\Phi(t,x,z)\,d\mu(t)(x,z)\,dt $. We abbreviate this as $d\boldsymbol\mu(t,x,z)=d\mu(t)(x,z)\,dt$, and write $\mu(t)$ for the slice at
	time $t$.\par
Let $\mathcal O$ = $\mathbb T^d$ or $\overline\Omega=\T^d\times [0,\infty)$. Fix a finite-dimensional inner-product space $V$ (below $V=\mathbb R$, $\mathbb
R^{d}$, or $\mathbb R^{d\times d}_{\mathrm{sym}}$) and let $\lambda$ be a
$V$-valued Radon measure on $\mathcal O$, with total variation
$|\lambda|\in\mathcal M^+(\mathcal O)$, $ |\lambda|(E):=\sup\Big\{\textstyle\sum_k|\lambda(E_k)|_V:\ \{E_k\}\ \text{a finite
	Borel partition of }E\Big\}$.
Given $\mu\in\mathcal M^+(\mathcal O)$, we write $\lambda\ll\mu$ iff
$|\lambda|\ll\mu$, equivalently $\mu(E)=0\Rightarrow\lambda(E)=0$ for all Borel
$E$. Radon--Nikod\'ym theorem then yields a density
$\tfrac{d\lambda}{d\mu}\in L^1_{\mathrm{loc}}(\mu;V)$, unique $\mu$-a.e., with
$\lambda=\tfrac{d\lambda}{d\mu}\,\mu$.
For $V=\mathbb R$ this is the signed scalar case (and a
positive measure is the sub-case $\lambda\ge0$, where $|\lambda|=\lambda$ and the
density is $\ge0$ a.e.). For $V=\mathbb R^{d\times d}_{\mathrm{sym}}$,
$\lambda\ll\mu$ holds iff $\lambda_{ij}\ll\mu$ for all $i,j$, iff $\lambda{:}(\xi\otimes\xi)\ll\mu$ for all $\xi\in\mathbb R^d$; the
density is symmetric a.e., and if $\lambda$ is positive semidefinite
($\lambda{:}(\xi\otimes\xi)\ge0$ for all $\xi$) then
$\tfrac{d\lambda}{d\mu}\succeq0$ a.e.. The above is collection of statements from Ambrossio, Fusco  and Pallara \cite[Chapter 1]{AFP2000}.

\subsubsection*{Weak limits of nonlinear quantities}
Passing to weak limits in nonlinear terms is the central difficulty of the compactness
argument, and requires some care. Let $Z_n$ be a sequence on $\mathbb T^d$ and
$F:\mathbb R\to\mathbb R$ continuous. Two objects must be distinguished.
\begin{itemize}[leftmargin=*]
	\item If $F(Z_n)$ is bounded in $L^1(\mathbb T^d)$, then along a subsequence it converges
	weakly-$*$ to a finite Radon measure, which we denote $\displaystyle \Ov{F(Z)} := \underset{n\to\infty}{w^*\text{-}\lim}\ F(Z_n)\ \in\ \mathcal M(\mathbb T^d)$.
	This limit exists under the $L^1$ bound \emph{alone}; no convergence of $Z_n$ itself is
	needed, and $\Ov{F(Z)}$ is a measure, not in general a function.
	
	\item Separately, if $Z_n\rightharpoonup Z$ in $L^p(\mathbb T^d)$ for some $1<p<\infty$
	(equivalently, $Z_n\rightharpoonup^{\ast} Z$ in $L^\infty(\mathbb T^d)$), the weak limit $Z$
	is a function and the pointwise composition $F(Z)$ is defined a.e. The two need not agree:
	the \emph{defect} $	\Ov{F(Z)} - F(Z)\ \in\ \mathcal M(\mathbb T^d) $, and encodes both the
	oscillation and the concentration lost in the limit.
	\item When $F$ is convex and non-negative, lower semicontinuity fixes the sign:
	\begin{equation}\label{defect1}
		\Ov{F(Z)}\ \ge\ F(Z)
		\qquad\text{as measures,}\qquad\text{i.e.}\qquad
		\Ov{F(Z)} - F(Z)\ \in\ \mathcal M^+(\mathbb T^d).
	\end{equation}
	The matrix-valued analogue, where for each fixed $\xi$ the scalar map
	$\vr\mapsto F(Z):(\xi\otimes\xi)$ is convex, yields $\Ov{F(Z)}-F(Z)\succeq0$; this is examined
	in detail in \S\ref{sec:stability-defect}.
\end{itemize}

\begin{rmk}[Detailed Description of Defect measures]
	The rigorous analysis of $\Ov{F(Z)}$ for nonegative, convex, weakly lower semicontinuous $F$ and of the sign in \eqref{defect1} is carried out through the theory of Young measures. The sequence $Z_n$ generates a Young measure $\{\nu_y\}$; the weak-$*$ limit is $\Ov{F(Z)} = \langle\nu;F(\lambda)\rangle + \mathfrak R^{\mathrm{cd}}$, splitting into the barycentric value and a concentration defect, and the oscillation defect $\langle\nu;F\rangle - F(\vr)$ together with $\mathfrak R^{\mathrm{cd}}$ is non-negative for convex $F$ by Jensen's inequality. We refer to \cite[Chapter~1]{C2021} for a self-contained treatment. 
\end{rmk}
\paragraph{Weighted Sobolev spaces and the trace operator}
\label{sec:weighted-spaces}

Throughout, $s \in (0,1)$ and we work on the extended (half-cylinder) domain $ \Omega = \mathbb{T}^d \times \mathbb{R}_+, \, \overline{\Omega} = \mathbb{T}^d \times [0,\infty),$
with points $(x,z)$, $x \in \mathbb{T}^d$, $z > 0$, and $\Grad_{x,z} = (\Grad_x, \partial_z)$. The
extension is governed by the degenerate weight $z^{1-2s}$, and the relevant spaces are the following.
\begin{itemize}[leftmargin=*]
	\item The natural energy space is the homogeneous weighted Sobolev space
	\begin{equation}\label{weighted-space}
		\dot{H}^1(\Omega, z^{1-2s})
		= \overline{\big\{ U \in C_c^\infty(\overline{\Omega}) \big\}}^{\,[\,\cdot\,]},
		\qquad
		[U]^2 := \iint_{\Omega} z^{1-2s}\, |\Grad_{x,z} U|^2 \,dx\,dz,
	\end{equation}
	the completion of smooth functions on $\overline{\Omega}$ under the weighted Dirichlet
	seminorm $[\,\cdot\,]$, together with the far-field normalization $U(x,z) \to 0$ as
	$z \to \infty$.
	
	\item  On $\mathbb{T}^d \times \{0\} \cong \mathbb{T}^d$ we use the homogeneous
	fractional Sobolev spaces defined through Fourier series: for
	$u = \sum_{k \in \mathbb{Z}^d} \hat u_k\, e^{ik\cdot x}$ with $\hat u_0 = 0$,
	\begin{equation}\label{frac-sobolev}
		\| u \|_{\dot{H}^{\sigma}(\mathbb{T}^d)}^2 := \sum_{k \neq 0} |k|^{2\sigma}\, |\hat u_k|^2,
		\qquad \sigma \in \mathbb{R},
	\end{equation}
	and $(-\Delta)^{\sigma}$ acts as the Fourier multiplier $|k|^{2\sigma}$. All boundary data below
	are mean-zero, so restricting to $\hat u_0 = 0$ is harmless.
\end{itemize}
The two spaces above are linked by the (Dirichlet) trace, which realizes the boundary value of an
extended function. On smooth functions it is the pointwise limit onto $\mathbb T^d\times\{0\}$,
\begin{equation}\label{trace-ext}
	\Lambda_0 U(x) := \lim_{z\to0^+} U(x,z), \qquad U \in C_c^\infty(\overline\Omega),
\end{equation}
and, by the Caffarelli--Silvestre extension theory \cite{CaS2007} (Fourier form on $\mathbb T^d$ in Roncal and Stinga \cite{RS2016}), it extends to a continuous operator
$\Lambda_0:\dot H^1(\Omega,z^{1-2s})\to\dot H^s(\mathbb T^d)$; further properties are recorded in
\S\ref{sec:extension-trace}.

\begin{rmk}[Muckenhoupt structure]\label{rem:A2}
	For $s\in(0,1)$ one has $1-2s\in(-1,1)$, so the weight $\omega(z)=|z|^{1-2s}$ lies in the
	Muckenhoupt class $A_2$ --- the structural hypothesis underlying the weighted Sobolev theory of
	Fabes, Kenig and Serapioni \cite{FKS1982} used throughout.
\end{rmk}




\section{Definition of Dissipative Solution and Main Result}\label{sec:defn}

We now formulate the notion of a \emph{dissipative solution} studied in this paper. First we collect the properties of the extension needed to state the definition and state the main result.

	\subsection{The Caffarelli--Silvestre extension}
	\label{sec:extension-trace}
	
We now turn to the weak form of the extension \eqref{CS-ext}, recalling that $\Lambda_0$ denotes the Dirichlet trace introduced in Remark \ref{trace-ext}. Given mean-zero data $f=\vr-\bar\vr\in\dot H^{-s}(\mathbb T^d)$, the weak form of extension \eqref{CS-ext} of $f$ is the unique $U\in\dot H^1(\Omega,z^{1-2s})$ with
	\begin{equation}\label{CS_weak}
		\iint_{\Omega} z^{1-2s}\,\Grad_{x,z} U\cdot\Grad_{x,z}\Psi\,dx\,dz
		= c_{d,s}^{-1}\big\langle f,\Lambda_0\Psi\big\rangle_{\dot H^{-s},\dot H^{s}}
		\qquad\text{for all }\Psi\in\dot H^1(\Omega,z^{1-2s}),
	\end{equation}
	with $c_{d,s}>0$ the explicit constant fixed in \eqref{constCDS} (Appendix \ref{sec:extension-appendix}), where we discuss the explicit extension on $\T^d$ with the help of the Fourier representation. Next, we introduce the notation 
	\begin{align}\label{Not:CSext}
			\mathfrak E[f]:=U,
	\end{align}
	that says $U$ is extension of $f$ that satisfies \eqref{CS_weak}. 
	
	Now we state the important properties of the extension that we will use in next sections: 
	\begin{prop}[Existence, trace, and energy isometry]\label{prop:CS_weak}
		Let $s\in(0,1)$ and let $f\in\dot H^{-s}(\mathbb T^d)$ be mean-zero and $c_{d,s}$ is given by \eqref{constCDS}. Then \eqref{CS_weak} has a unique solution $U\in\dot H^1(\Omega,z^{1-2s})$, and:
		\begin{enumerate}[leftmargin=*]
			\item \textup{(Trace.)} The Dirichlet trace $\Lambda_0 U\in\dot H^s(\mathbb T^d)$ is well defined and solves $(-\Delta)^s\Lambda_0 U=f$ on $\mathbb T^d$ in the distributional sense\footnote{On $\mathbb T^d$, $(-\Delta)^s\Lambda_0 U=f$ is understood in the sense of Fourier series, $|k|^{2s}\widehat{\Lambda_0 U}(k)=\hat f(k)$ for $k\neq0$; since $f$ and $\Lambda_0 U$ are both mean-zero, this is equivalent to $\langle\Lambda_0 U,(-\Delta)^s\varphi\rangle=\langle f,\varphi\rangle$ for all mean-zero $\varphi\in C^\infty(\mathbb T^d)$, using that $(-\Delta)^s$ preserves $C^\infty(\mathbb T^d)$ and annihilates constants.}; equivalently, $\Lambda_0 U=\Phi$, the Riesz potential of $f$.
			\item \textup{(Energy isometry.)} The weighted energy equals the fractional boundary energy, and hence the negative norm of the source:
			\begin{equation}\label{CS-prop1}
				c_{d,s}\iint_{\Omega} z^{1-2s}\,|\Grad_{x,z} U|^2\,dx\,dz
				= \|\Lambda_0 U\|_{\dot H^s(\mathbb T^d)}^2
				= \|f\|_{\dot H^{-s}(\mathbb T^d)}^2 .
			\end{equation}
		\end{enumerate}
		For smooth $U$ the identity \eqref{CS_weak} is equivalent to the pointwise problem
		\[
		\Div_{x,z}\!\left(z^{1-2s}\,\Grad_{x,z} U\right)=0\ \text{ in }\Omega,
		\qquad
		-c_{d,s}\lim_{z\to0^+} z^{1-2s}\,\partial_z U=f\ \text{ on }\mathbb T^d\times\{0\}.
		\]
	
	\end{prop}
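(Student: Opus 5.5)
The plan is to exploit the Fourier structure on $\mathbb T^d$: separate variables in $x$ and reduce everything to a one-dimensional ODE in $z$ for each frequency $k\neq0$. Existence and uniqueness come first and are abstract. On $\dot H^1(\Omega,z^{1-2s})$, normalized by $U\to0$ as $z\to\infty$, the bracket $[\cdot]$ is a norm. The functional $\Psi\mapsto c_{d,s}^{-1}\langle f,\Lambda_0\Psi\rangle$ is bounded, because $|\langle f,\Lambda_0\Psi\rangle|\le\|f\|_{\dot H^{-s}}\|\Lambda_0\Psi\|_{\dot H^s}\le C\|f\|_{\dot H^{-s}}[\Psi]$ by the continuity of the trace recalled after \eqref{trace-ext}. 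Lax--Milgram (in fact Riesz representation, since the bilinear form is the inner product) then gives a unique $U$ solving \eqref{CS_weak}, with $[U]\le C\|f\|_{\dot H^{-s}}$.

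To identify $U$ and obtain the sharp isometry, I would write $U(x,z)=\sum_{k\neq0}\hat u_k(z)e^{ik\cdot x}$. The weak equation decouples into $(z^{1-2s}\hat u_k')'=|k|^2z^{1-2s}\hat u_k$ on $(0,\infty)$, with decay at infinity and weighted Neumann datum $-c_{d,s}\lim_{z\to0}z^{1-2s}\hat u_k'(z)=\hat f_k$. The decaying solution is $\hat u_k(z)=\hat u_k(0)\,\theta(|k|z)$, where $\theta(t)=\frac{2^{1-s}}{\Gamma(s)}t^sK_s(t)$ and $K_s$ is the modified Bessel function. This profile satisfies $\theta(0)=1$, decays exponentially, and has $\lim_{t\to0}t^{1-2s}\theta'(t)=-\kappa_s:=-\frac{2^{1-2s}\Gamma(1-s)}{\Gamma(s)}$. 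Scaling gives $z^{1-2s}\hat u_k'(z)\to-\kappa_s|k|^{2s}\hat u_k(0)$, so the Neumann condition reads $c_{d,s}\kappa_s|k|^{2s}\hat u_k(0)=\hat f_k$. Choosing $c_{d,s}=\kappa_s^{-1}$, which I expect is the normalization in \eqref{constCDS}, yields $|k|^{2s}\widehat{\Lambda_0U}(k)=\hat f_k$. This is item 1, and it matches the footnote's Fourier definition. Uniqueness in the weak class guarantees that this explicit function is the Lax--Milgram solution: it lies in the energy space by the computation below, and it satisfies \eqref{CS_weak} when tested frequency by frequency.

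For item 2, I would take $\Psi=U$ in \eqref{CS_weak}. This gives $c_{d,s}[U]^2=\langle f,\Lambda_0U\rangle=\sum_k|k|^{2s}|\widehat{\Lambda_0U}(k)|^2=\sum_k|k|^{-2s}|\hat f_k|^2$, using item 1. This is exactly \eqref{CS-prop1}, with Parseval on $\mathbb T^d$ (up to the $(2\pi)^d$ normalization, which I will absorb consistently into the pairings). As a cross-check, Parseval directly gives $[U]^2=\sum_k|\hat u_k(0)|^2|k|^{2s}\int_0^\infty t^{1-2s}(\theta'^2+\theta^2)\,dt$, and the integral equals $\kappa_s$ after integrating by parts with the ODE.

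Finally, the smooth equivalence follows by integrating by parts in \eqref{CS_weak} against $\Psi\in C_c^\infty(\overline\Omega)$. Interior test functions give $\Div_{x,z}(z^{1-2s}\Grad_{x,z}U)=0$. The remaining boundary term is $-\int_{\mathbb T^d}\lim_{z\to0}z^{1-2s}\partial_zU\,\Lambda_0\Psi\,dx$, and equating it with $c_{d,s}^{-1}\int f\Lambda_0\Psi$ for arbitrary traces gives the flux condition. The argument reverses. The main obstacle is the boundary behaviour of the Bessel profile, namely the precise limit of $t^{1-2s}\theta'(t)$ as $t\to0$, since this pins down $c_{d,s}$ and controls the weighted flux. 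I would handle it through the small-argument expansion $K_s(t)\sim\frac12\Gamma(s)(t/2)^{-s}-\frac12\Gamma(1-s)\cdots(t/2)^{s}/s$ together with the identity $(t^sK_s)'=-t^sK_{s-1}$. A secondary point is the trace continuity $\Lambda_0:\dot H^1(\Omega,z^{1-2s})\to\dot H^s$, which I would take from the cited extension theory, consistent with \S\ref{sec:extension-trace}.
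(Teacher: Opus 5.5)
Your proposal is correct and follows essentially the same route as the paper. The paper cites Caffarelli--Silvestre and Roncal--Stinga for the classical statement and, in Appendix~\ref{sec:extension-appendix}, diagonalizes the extension in Fourier with the Bessel profile $\psi(r)=\tfrac{2^{1-s}}{\Gamma(s)}r^sK_s(r)$, fixes $c_{d,s}=\kappa_s^{-1}$ through the co-normal limit $r^{1-2s}\psi'(r)\to-\kappa_s$, and obtains \eqref{CS-prop1} by Parseval. Your Riesz-representation step and the choice $\Psi=U$ simply make explicit what the paper leaves to the references, and your cross-check $\int_0^\infty t^{1-2s}(\theta'^2+\theta^2)\,dt=\kappa_s$ is exactly the paper's Parseval computation.
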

	Proposition~\ref{prop:CS_weak} is classical; see Caffarelli and Silvestre \cite{CaS2007} for the extension characterization of $(-\Delta)^s$, Roncal and Stinga \cite{RS2016} for the spectral form adapted to the torus, and references therein. 	
\begin{rmk}\label{rem:trace-mean-zero}
	In particular $\Lambda_0 U\in\dot H^s(\mathbb T^d)$ has zero mean,
	$\int_{\mathbb T^d}\Lambda_0 U\,dx=0$, since $\Lambda_0 U=(-\Delta)^{-s}(\vr-\bar\vr)$ and
	$(-\Delta)^{-s}$ preserves the mean-zero class.
\end{rmk}

	\subsubsection*{The energy in extended variables}
	\label{sec:energy-extended}
		In formal setup, we observe $\Grad_x \Phi = \Grad_x \Lambda_0 U$, so the fluid feels the extended field only through its trace, and the momentum equation \eqref{eq:ERm} (equivalently \eqref{eq:ERm-ext}) takes the localized form
	\begin{equation}\label{eq:ERm-2}
		\partial_t(\vr \vu) + \Div_x (\vr \vu \otimes \vu ) + \Grad_x p(\vr) + \vr\, \Grad_x \Lambda_0 U = 0 .
	\end{equation}
	
	The gain of the extension is most transparent at the level of the energy. Using $(-\Delta)^s \Phi = \vr - \bar\vr$ and $\Phi = \Lambda_0 U$, the nonlocal interaction energy is a fractional Dirichlet energy of the potential, and it reads as
	
	\begin{equation*}
		\frac{1}{2} \int_{\mathbb{T}^d} (\vr - \bar\vr)\, \Phi \,dx
		= \frac{1}{2} \big\| \Phi \big\|_{\dot{H}^s(\mathbb{T}^d)}^2 ,
	\end{equation*}
	and, by the isometry \eqref{CS-prop1}, this equals the weighted Dirichlet energy of the extension:
	\begin{equation*}
		\frac{1}{2} \int_{\mathbb{T}^d} (\vr - \bar\vr)\, \Phi \,dx
		= \frac{c_{d,s}}{2} \iint_{\Omega} z^{1-2s}\, |\Grad_{x,z} U|^2 \,dx\,dz .
	\end{equation*}
	Thus the singular, nonlocal interaction energy is realized as a \emph{local} (weighted) energy in the bulk. Accordingly, the total energy of the system reads
	\begin{equation}\label{en_def_ext}
		\mathcal{E}(t)
		= \int_{\mathbb{T}^d} \left( \frac{1}{2} \vr |\vu|^2 + P(\vr) \right) dx
		+ \frac{c_{d,s}}{2} \iint_{\Omega} z^{1-2s}\, |\Grad_{x,z} U(t)|^2 \,dx\,dz ,
	\end{equation}
	with $P(\vr) = \tfrac{a}{\gamma-1}\vr^\gamma$. 
	
	\subsection{Definition of Dissipative Solution}
	Now we are in a position to state the definition
	
	\begin{df}[Dissipative Solution]\label{def:dissipative_weak_sol}
		Let $T>0$ and $\bar{\vr}>0$ be given, and let $\Omega=\mathbb T^d\times(0,\infty)$ with closure
		$\overline\Omega=\mathbb T^d\times[0,\infty)$. Assume the initial data \eqref{id} have finite energy,
		\begin{align}\label{eq:E0}
			\mathcal E_0 := \int_{\mathbb T^d}\Big(\tfrac12\tfrac{|\vm_0|^2}{\vr_0}+P(\vr_0)\Big)dx
			+ \tfrac{c_{d,s}}2\iint_\Omega z^{1-2s}|\Grad_{x,z}U_0|^2\,dx\,dz <\infty .
		\end{align}
		A septuplet $(\vr,\vm,U,\mathcal R_{fluid},\mathcal R_{ext},\mathcal E_{fluid},\mathcal E_{ext})$ consisting of
		\begin{itemize}[leftmargin=*]
			\item the fluid density $\vr\ge0$ and momentum $\vm$, and the extended potential $U$;
			\item the symmetric matrix-valued \emph{turbulent defect measures}
			\[
			\mathcal R_{fluid}\in L^\infty_{\mathrm{weak}\text{-}*}\big(0,T;\mathcal M^+(\mathbb T^d;\mathbb R^{d\times d}_{\mathrm{sym}})\big),
			\qquad
			\mathcal R_{ext}\in L^\infty_{\mathrm{weak}\text{-}*}\big(0,T;\mathcal M(\overline\Omega;\mathbb R^{d\times d}_{\mathrm{sym}})\big);
			\]
			\item the nonnegative scalar \emph{energy defect measures}
			\[
			\mathcal E_{fluid}\in L^\infty_{\mathrm{weak}\text{-}*}\big(0,T;\mathcal M^+(\mathbb T^d)\big),
			\qquad
			\mathcal E_{ext}\in L^\infty_{\mathrm{weak}\text{-}*}\big(0,T;\mathcal M^+(\overline\Omega)\big);
			\]
		\end{itemize}
		is a \emph{dissipative weak solution} to the extended Euler--Riesz system on $[0,T]$ with initial
		data $(\vr_0,\vm_0,U_0)$ if the state variables lie in the regularity class
		\begin{align}\label{regclass}
			0\leq \vr,\,
			\vr\in C_{\mathrm{weak}}([0,T];L^\gamma(\mathbb T^d)),\,
			\vm\in C_{\mathrm{weak}}([0,T];L^{\frac{2\gamma}{\gamma+1}}(\mathbb T^d;\mathbb R^d)),\,
			U\in L^\infty(0,T;\dot H^1(\Omega,z^{1-2s})),
		\end{align}
		and the following hold.
		\begin{enumerate}[leftmargin=*]
			\item \textbf{Continuity equation.} For all $\phi\in C_c^\infty([0,T)\times\mathbb T^d)$, for a.e. $\tau \in (0,T)$ the identity 
			\begin{equation}\label{eq:w-ERc}
				\int_0^{\tau}\!\!\int_{\mathbb T^d}\big(\vr\,\partial_t\phi+\vm\cdot\Grad_x\phi\big)\,dx\,dt
				+\int_{\mathbb T^d}\vr_0\,\phi(0,\cdot)\,dx- \int_{\mathbb T^d}\vr(\tau)\,\phi(\tau,\cdot)\,dx=0.
			\end{equation}
		\item \textbf{Momentum equation.} For all $\pmb\psi\in C^\infty([0,T]\times\mathbb T^d;\mathbb R^d)$,
		for a.e.\ $\tau\in(0,T)$,
		\begin{equation}\label{eq:w-ERm}
			\begin{aligned}
				&\int_0^\tau\!\!\int_{\mathbb T^d}\Big(\vm\cdot\partial_t\pmb\psi
				+\big(\tfrac{\vm\otimes\vm}{\vr}+p(\vr)\mathbb I_d\big):\Grad_x\pmb\psi\Big)\,dx\,dt\\
				&\quad -\,c_{d,s}\int_0^\tau\!\!\iint_\Omega \mathbb T_{\rm ER}[U]:\Grad_x\pmb\psi\,dx\,dz\,dt
				+\,\bar\vr\int_0^\tau\!\!\int_{\mathbb T^d}\Lambda_0(U)\,\Div_x\pmb\psi\,dx\,dt\\
				&\quad +\int_0^\tau\!\!\int_{\mathbb T^d}\Grad_x\pmb\psi:d\mathcal R_{fluid}
				-\,c_{d,s}\int_0^\tau\!\!\iint_{\overline\Omega}\Grad_x\pmb\psi:d\mathcal R_{ext}\\
				&\quad +\int_{\mathbb T^d}\vr_0\vu_0\cdot\pmb\psi(0,\cdot)\,dx
				-\int_{\mathbb T^d}\vm(\tau)\cdot\pmb\psi(\tau,\cdot)\,dx
				= 0 .
			\end{aligned}
		\end{equation}
			with $\mathbb T_{\rm ER}[U]=z^{1-2s}\big(\Grad_xU\otimes\Grad_xU-\tfrac12|\Grad_{x,z}U|^2\mathbb I_d\big)$.
			\item \textbf{Extension PDE.} For a.e.\ $t\in(0,T)$,
			\begin{equation}\label{eq:w-ext}
				\iint_\Omega z^{1-2s}\Grad_{x,z}U\cdot\Grad_{x,z}\Psi\,dx\,dz
				=c_{d,s}^{-1}\int_{\mathbb T^d}(\vr-\bar\vr)\Lambda_0(\Psi)\,dx
				\qquad\forall\,\Psi\in\dot C_c^\infty(\overline\Omega).
			\end{equation}
			\item \textbf{Energy inequality.} For a.e.\ $\tau\in(0,T)$,
			\begin{equation}\label{eq:ei}
				\begin{aligned}
					&\int_{\mathbb T^d}\Big(\tfrac12\tfrac{|\vm|^2}{\vr}+P(\vr)\Big)(\tau)\,dx
					+\tfrac{c_{d,s}}2\iint_\Omega z^{1-2s}|\Grad_{x,z}U(\tau)|^2\,dx\,dz
					+\int_{\mathbb T^d}d\mathcal E_{fluid}(\tau)+\iint_{\overline\Omega}d\mathcal E_{ext}(\tau)\\
					&\le \int_{\mathbb T^d}\Big(\tfrac12\tfrac{|\vm_0|^2}{\vr_0}+P(\vr_0)\Big)\,dx
					+\tfrac{c_{d,s}}2\iint_\Omega z^{1-2s}|\Grad_{x,z}U_0|^2\,dx\,dz .
				\end{aligned}
			\end{equation}
		\item \textbf{Defect measure compatibility.} 
		\begin{enumerate}
			\item[\rm(a)] \emph{(Fluid.)} For a.e.\ $t\in(0,T)$, there exist constants $\lambda_1,\lambda_2>0$, independent of $t$, with
			\begin{equation}\label{dm-compf}
				\lambda_1 \operatorname{tr}\big(\mathcal R_{fluid}(t)\big) \ \le\ \mathcal E_{fluid}(t) \ \le\ \lambda_2 \operatorname{tr}\big(\mathcal R_{fluid}(t)\big)
				\qquad\text{on } \mathbb{T}^d .
			\end{equation}
		\item[\rm(b)] \emph{(Extension.)} For a.e.\ $t\in(0,T)$, $\mathcal R_{ext}(t)\ll\mathcal E_{ext}(t)$ entrywise, with symmetric
		Radon--Nikod\'ym density\footnote{For a finite nonnegative Radon measure $\mu$ on $\overline\Omega$ and $1\le p<\infty$, the space
				$L^p(\overline\Omega,\mu;\mathbb R^{d\times d}_{\mathrm{sym}})$ carries the norm
				$\|\mathbb D\|_{L^p(\overline\Omega,\mu)}:=\Big(\int_{\overline\Omega}\|\mathbb D(x,z)\|_{\mathrm{op}}^{\,p}\,d\mu\Big)^{1/p}$;
				for $p=\infty$,
				$\|\mathbb D\|_{L^\infty(\overline\Omega,\mu)}:=\mu\text{-}\hspace{-0.15em}\operatorname*{ess\,sup}_{(x,z)\in\overline\Omega}\|\mathbb D(x,z)\|_{\mathrm{op}}$,
				the essential supremum of the spectral operator norm.}
		\begin{equation}\label{dm-compE}
		\mathbb D(t):=\frac{d\big(c_{d,s}\,\mathcal R_{ext}(t)\big)}{d\,\mathcal E_{ext}(t)}
		\in L^\infty\big(\overline\Omega,\mathcal E_{ext}(t);\mathbb R^{d\times d}_{\mathrm{sym}}\big),
		\qquad \|\mathbb D(t)\|_{L^\infty( \Ov{\Om},\mathcal E_{ext}(t))}\le 1 .
		\end{equation}
	\end{enumerate}
		Condition \eqref{dm-compE} states that the extension defect is dominated by the energy defect it carries; equivalently,
		\begin{equation}\label{defm-do}
		-\,\mathcal E_{ext} \ \le\ \big(c_{d,s}\,\mathcal R_{ext}\big):(\xi\otimes\xi) \ \le\ \mathcal E_{ext}
		\, \text{as scalar measures, for every } \xi\in\mathbb R^d \text{ with } |\xi|=1 .
		\end{equation}
		\end{enumerate}

	\end{df}

	\begin{rmk}\label{rem:no-sign}
		Compared with \cite{BeFH2020d}, the extension defect $\mathcal R_{ext}$ lacks a
		sign: it is only symmetric, not positive semidefinite. What replaces the sign is the
		absolute continuity \eqref{dm-compE} along with the explict structure that we discuss in Lemma \ref{lem:Rext-structure} in Section \ref{sec:stability-defect}: $\mathcal R_{ext}$ is dominated by the energy defect
		$\mathcal E_{ext}$ through a density of operator norm at most one. 
	\end{rmk}

	\subsection{Main Result}\label{sec:main-result}
	
We can now state the main result precisely.
	
	\begin{thm}[Weak--strong uniqueness]\label{thm:ws}
		Let $d=2,3$, $\gamma>1$, $s\in(0,1)$ (equivalently, $\beta \in (0,2)$) and $T>0$. Let
		\[
		\big(\vr,\ \vm,\ U,\ \mathcal R_{fluid},\ \mathcal R_{ext},\ \mathcal E_{fluid},\ \mathcal E_{ext}\big)
		\]
		be a dissipative weak solution of the extended Euler--Riesz system on $[0,T]$ in the sense of Definition~\ref{def:dissipative_weak_sol}, and let $(r,\vv)$ solve \eqref{eq:ERc}--\eqref{eq:ERm} classically, with
		\begin{equation}\label{strong-class}
			r,\ \vv\in C^1\big([0,T]\times\mathbb T^d\big),
			\qquad
			\underline r:=\min_{[0,T]\times\mathbb T^d} r>0 ,
		\end{equation}
		and $V$ the extension of $r-\bar r$( i.e. $V= \mathfrak{E}[r-\Ov{r}]$). If initial data for a weak and the strong solution coincides 
		\[
		\vr(0,\cdot)=r(0,\cdot),\qquad \vm(0,\cdot)=(r\vv)(0,\cdot),
		\]
		then $\bar\vr=\bar r$ and, for a.e. $\tau \in (0,T)$ the following folds 
		\[
		\vr(\tau)=r(\tau),\qquad \vm(\tau)=r(\tau)\vv(\tau),\qquad U(\tau)=V(\tau),
		\qquad
		\mathcal E_{fluid}=\mathcal E_{ext}=\mathcal R_{fluid}=\mathcal R_{ext}=0.
		\]
		In particular the dissipative solution coincides with the strong one, and the energy is conserved on $[0,T]$.
	\end{thm}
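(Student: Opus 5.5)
The plan is the relative energy method built on $\Erel$ from \eqref{RE:Def}, augmented by the defect terms. Set
\[
\mathcal E_{\rm rel}^{+}(\tau):=\Erel(\tau)+\int_{\mathbb T^d}d\mathcal E_{fluid}(\tau)+\iint_{\overline\Omega}d\mathcal E_{ext}(\tau).
\]
The goal is a Gronwall inequality $\mathcal E_{\rm rel}^{+}(\tau)\le C\int_0^\tau \mathcal E_{\rm rel}^{+}(t)\,dt$ with $C$ depending only on $\|(r,\vv)\|_{C^1}$, $\underline r$ and $\gamma$. Once this holds, $\mathcal E_{\rm rel}^{+}(0)=0$ gives $\mathcal E_{\rm rel}^{+}\equiv0$ a.e.

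\textbf{Preliminaries.} First, $\bar\vr=\bar r$ follows from \eqref{eq:w-ERc} with $\phi\equiv1$ and the equality of the initial densities. Then $\vr-\bar\vr$ and $r-\bar r$ have the same mean, so $U-V=\mathfrak E[\vr-r]$ by linearity of \eqref{CS_weak}.

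\textbf{Expanding the relative energy.} Write
\[
\Erel=\mathcal E-\int\vm\cdot\vv+\int\tfrac12 r|\vv|^2\,\tfrac{\vr}{r}-\int P'(r)(\vr-r)-\int P(r)+\tfrac{c_{d,s}}2[V]^2-c_{d,s}\iint z^{1-2s}\Gradxz U\cdot\Gradxz V .
\]
Each piece is handled as follows:
\begin{itemize}
\item The energy term is bounded by \eqref{eq:ei}, which also absorbs the energy defects.
\item The terms $\int\vm\cdot\vv$ and $\int \vr(\tfrac12|\vv|^2-P'(r))$ are computed from \eqref{eq:w-ERm} with $\pmb\psi=\vv$ and from \eqref{eq:w-ERc} with $\phi=\tfrac12|\vv|^2-P'(r)$. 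This is legitimate since these are $C^1$ and the weak formulations extend by density.
\item The cross term is rewritten through \eqref{eq:w-ext} with $\Psi=V$ as $\int(\vr-\bar r)\Lambda_0V=\int(\vr-\bar r)\,(-\Delta)^{-s}(r-\bar r)$. Its time derivative follows from the continuity equations for $\vr$ and $r$, tested against $\Lambda_0V$ and against the potential of $\vr$, using the symmetry of $(-\Delta)^{-s}$. In this step one needs $\partial_t\Lambda_0 V$ and $\Grad_x\Lambda_0 V$ to be smooth; this follows from $r\in C^1$ because $(-\Delta)^{-s}$ gains regularity. If $C^1$ proves insufficient for $\Grad_x\Lambda_0V\in L^\infty$, I would use that $(-\Delta)^{-s}$ maps $C^1$ into $C^{1,2s-\epsilon}$.
\end{itemize}

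\textbf{Cancellations.} Using the strong equations for $(r,\vv)$ in the standard way, the fluid part collapses to
\[
-\int_0^\tau\!\!\int\Big(\vr(\tfrac{\vm}{\vr}-\vv)\otimes(\tfrac{\vm}{\vr}-\vv):\Grad_x\vv+p(\vr|r)\Div_x\vv\Big)-\int_0^\tau\!\!\int\Grad_x\vv:d\mathcal R_{fluid},
\]
plus the interaction terms. The Riesz force terms combine, through \eqref{eq:nonlocMS-intro} applied to $V$ and the momentum weak form for $U$, into
\[
c_{d,s}\int_0^\tau\!\!\iint_\Omega\mathbb T_{\rm ER}[U-V]:\Grad_x\vv \;+\; c_{d,s}\int_0^\tau\!\!\iint\Grad_x\vv:d\mathcal R_{ext},
\]
plus terms that cancel. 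The bilinear identity is
\[
\mathbb T_{\rm ER}[U]-\mathbb T_{\rm ER}[V]-(\text{linearization at }V)=\mathbb T_{\rm ER}[U-V].
\]
The linearized terms $\iint z^{1-2s}(\Gradx V\otimes\Gradx W+\dots):\Grad_x\vv$ with $W=U-V$ must be matched with $\int(\vr-r)(\Grad_x\Lambda_0V\cdot\vv)$-type terms coming from the $\vv$-test of the continuity equation. This matching is the heart of the proof and is the main obstacle. I would carry it out by applying the polarized version of \eqref{eq:nonlocMS-intro} (a symmetric bilinear identity in $(U,V)$, derived by integrating by parts in $\Omega$ the horizontal rows of the polarized tensor \eqref{Tfull}), together with the $\bar\vr\Lambda_0$ terms. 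This leaves only the quadratic remainder plus
\[
\int(\vr-r)\,\Grad_x\Lambda_0 W\cdot\vv=\int(\vr-r)\,\vv\cdot\Grad_x(-\Delta)^{-s}(\vr-r).
\]
This last term should itself equal $-c_{d,s}\iint\mathbb T_{\rm ER}[W]:\Grad_x\vv$, up to the sign convention, by \eqref{eq:nonlocMS-intro} applied to $W=\mathfrak E[\vr-r]$ (mean-zero, so no $\bar\vr$ term). So the nonlocal contribution reduces to $\iint\mathbb T_{\rm ER}[W]:\Grad_x\vv$.

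\textbf{Closing the estimate.} All remainders are bounded by $\|\Grad_x\vv\|_{L^\infty}$ times the relevant part of $\mathcal E_{\rm rel}^{+}$:
\begin{itemize}
\item $|\mathbb T_{\rm ER}[W]|\le z^{1-2s}|\Gradxz W|^2$ pointwise, so the interaction remainder is controlled by the extension part of $\Erel$.
\item $p(\vr|r)\le (\gamma-1)P(\vr|r)$.
\item $|\Grad_x\vv:\mathcal R_{fluid}|\le\|\Grad_x\vv\|_\infty\operatorname{tr}\mathcal R_{fluid}\le\lambda_1^{-1}\|\Grad_x\vv\|_\infty\,\mathcal E_{fluid}$ by \eqref{dm-compf}.
\item $|c_{d,s}\Grad_x\vv:\mathcal R_{ext}|\le \|\Grad_x\vv\|_{\rm op}\,\mathcal E_{ext}$, which I would get via \eqref{dm-compE}, as $|\mathbb D:A|\le \|\mathbb D\|_{\rm op}\,|A|_{\rm trace\ norm}$, accepting a dimensional constant.
\end{itemize}
Gronwall then gives $\mathcal E_{\rm rel}^{+}=0$ a.e. Hence $\vr=r$ (by strict convexity of $P$), $\vm=r\vv$, $\Gradxz(U-V)=0$ and so $U=V$ by the decay normalization, all defects $\mathcal E$ vanish, and the $\mathcal R$'s vanish by \eqref{dm-compf} and \eqref{dm-compE}. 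Energy conservation follows.
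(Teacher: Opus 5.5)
Your architecture is the paper's: Lemmas~\ref{lem:generic} and~\ref{lem:reduced} use the same augmented functional, the same tests $\pmb\psi=\vv$, $\phi=\tfrac12|\vv|^2-P'(r)$ and $\Psi=V$, the same symmetry trick for the cross term, and the same defect bounds followed by Gr\"onwall. The gap is in the nonlocal bookkeeping, which is the only genuinely delicate point.

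\textbf{The leftover term does not exist.} After both equations for $(r,\vv)$ are inserted, the nonlocal terms are exactly $\mathcal N$ in \eqref{N}. Expanding $\mathbb T_{\rm ER}[U]$ by \eqref{Tpolar}, and using \eqref{stress-id2} for $V$ and \eqref{stress-id} for the pair $(U,V)$, gives $\mathcal N=c_{d,s}\int_0^\tau\!\iint_\Omega\mathbb T_{\rm ER}[U-V]:\Gradx\vv$ with nothing left over. The extra term $\int(\vr-r)\,\vv\cdot\Gradx\Lambdaz(U-V)$ that you keep is the formal force term of \eqref{mid}. It cannot arise in the weak computation, because \eqref{eq:w-ERm} contains $\mathbb T_{\rm ER}[U]$ and never $\vr\,\Gradx\Lambdaz U$. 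Keeping it double counts.

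\textbf{The step you propose for it would fail.} For $\vr\in L^\gamma$, the product $(\vr-r)\,\Gradx(-\Delta)^{-s}(\vr-r)$ is not a Lebesgue integral; for $s<\tfrac12$ the second factor need not even be a function. Moreover, \eqref{eq:nonlocMS-intro} applied to $W=\mathfrak E[\vr-r]$ requires $\Gradx\Gradxz W\in L^2(\Om,z^{1-2s})$. By Proposition~\ref{prop:CS_regularity} this means $\vr-r\in\dot H^{1-s}$, which a dissipative solution does not have.

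\textbf{The step you do need is not justified as stated.} The same regularity problem affects the polarized identity for $(U,V)$. It cannot be derived, as you suggest, by integrating by parts the horizontal rows of the polarized tensor. That computation is equivalent to testing the extension equation for $V$ with $\pmb\psi\cdot\Gradx U$, and it uses $\Gradx\Gradxz U\in L^2(\Om,z^{1-2s})$ (Remark~\ref{rem:stress-id-scope}).

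The missing ingredient is an approximation that exploits the fact that this identity is \emph{linear} in the rough field. The paper mollifies only the density, $\vr_\epsilon=\eta_\epsilon\ast\vr$, and sets $U_\epsilon=\mathfrak E[\vr_\epsilon-\bar\vr]$. It applies \eqref{stress-id} to $(U_\epsilon,V)$ and removes $\epsilon$ using the isometry \eqref{eq:eps-energy} together with $\vv\cdot\Gradx\Lambdaz V\in\dot H^{s}$.

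This is also why the computation must be arranged so that $U$ enters quadratically only through $\mathbb T_{\rm ER}[U]$, as supplied by \eqref{eq:w-ERm}. A quadratic identity in $\mathfrak E[\vr-r]$ has a left-hand side with no independent meaning for the weak solution, so it cannot be recovered by this density argument.
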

	\begin{rmk}\label{compref}
		It is worth noting that in the repulsive setting Alves et al.~\cite{AlGT2026} obtain
		weak--strong uniqueness for $\gamma\ge 2-\tfrac{\beta}{d}$ over the full Riesz range
		$\beta\in(0,d)$, coupling the pressure exponent to the singularity order. In our setting the
		two decouple: the result holds for every $\gamma>1$ and every $\beta=2s\in(0,2)$,
		independently. This decoupling is the point of the extension formulation --- the nonlocal
		force is controlled through the extension energy rather than the pressure --- while the
		restriction $\beta\in(0,2)$ is its price: the reformulation of the force as the divergence of
		the first-order stress \eqref{T} relies on the Caffarelli--Silvestre extension being of first
		order. For $d=2$ the ranges coincide ($\beta\in(0,2)=(0,d)$) and the improvement is entirely
		in $\gamma$; for $d\ge3$ the restriction $\beta<2$ excludes part of the \cite{AlGT2026} range.
	\end{rmk}
	\begin{rmk}
		We donot put any separate regularity assumption for the extended variable of strong solution.  As noted in a later proposition (Proposition~\ref{prop:CS_regularity}), we deduce certain regularity of $\mathfrak{E}(r-\Ov{r})$. In the proof of the Theorem \ref{thm:ws} in Section \ref{sec:relative}, we note that the choice of class cann be slightly relaxed by taking 
		\begin{align*}
	&	0 <	\underbar{r}\leq r \in L^\infty\lr{(0,T)\times \T^d}, \, \Gradx r \in L^1(0,T; L^\infty(\T^d)) \\ &\text{ and }  \vv \in L^\infty\lr{(0,T)\times \T^d}, \, \Gradx \vv \in L^1(0,T; L^\infty(\T^d)) 
		\end{align*}
			Since both continuity equations conserve mass, so $\bar\vr$ and $\bar r$ are constant in time, and the shared initial datum gives $\bar\vr=\bar r$ throughout. Writing $\bar\rho$ for the common value, $U$ and $V$ are the extensions of $\vr-\bar\rho$ and $r-\bar\rho$, so that $U-V$ is the extension of $\vr-r$.
	\end{rmk} 
	
	\medskip

The class \eqref{strong-class} is non-empty on a short time interval; for our system this
follows from the local existence theory of Choi and Jeong~\cite{CJ2022}. Their interaction
kernel is $|x|^{-\alpha}$, normalized so that $c_{\alpha,d}\,|x|^{-\alpha}\ast\,\cdot
=(-\Delta)^{(\alpha-d)/2}$. Matching this with our kernel gives $\alpha=d-2s$, so
$\alpha\in(d-2,d)$ for $s\in(0,1)$, and our repulsive interaction is their case $c_K<0$. We
always take a genuine pressure, $\gamma>1$, so only the pressure case of \cite{CJ2022} is
relevant; the pressureless and attractive regimes treated there are not needed. Translating
their result into our notation:

\begin{prop}[Local existence, {\cite[Thm.~4.1 and Rmk.~4.3]{CJ2022}}]\label{prop:local-existence}
	Let $m>\tfrac d2+1$ and $s\in(0,1)$. Given $r_0-\bar r\in H^m(\mathbb T^d)$ and
	$\vv_0\in H^m(\mathbb T^d;\mathbb R^d)$ with $\min_{\mathbb T^d} r_0>0$, there exist $T^*>0$
	and a unique solution of \eqref{eq:ERc}--\eqref{eq:ERm} with
	\[
	r-\bar r,\ \vv\ \in\ C\big([0,T^*);H^m\big)\cap C^1\big([0,T^*);H^{m-1}\big),
	\qquad \min_{[0,T]\times\mathbb T^d} r>0 \ \text{ for every } T<T^*.
	\]
\end{prop}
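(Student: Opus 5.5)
The plan is to reduce the statement to \cite[Thm.~4.1 and Rmk.~4.3]{CJ2022} by a dictionary between their formulation and ours, and to check that every hypothesis there is met. For orientation, I will also indicate why the result is true via the standard symmetric-hyperbolic energy method.

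\emph{Step 1: translation.} Choi and Jeong write the force as $-c_K\,r\,\Grad_x(|x|^{-\alpha}\ast r)$ with $c_{\alpha,d}|x|^{-\alpha}\ast\cdot=(-\Delta)^{(\alpha-d)/2}$. Setting $\alpha=d-2s$ turns their potential into $(-\Delta)^{-s}(r-\bar r)=\Phi$, once the constant $c_{\alpha,d}$ is absorbed as in \eqref{eq:ERr}. The repulsive sign in \eqref{riesz-kernel} corresponds to their $c_K<0$, and $p(r)=ar^\gamma$ with $\gamma>1$ is their pressure case.

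On $\mathbb T^d$ the neutralization by $\bar r$ is needed so that the convolution is defined. Since $\int_{\mathbb T^d} r\,dx$ is conserved by \eqref{eq:ERc}, $\bar r$ is a constant, and $\Grad_x\Phi$ is exactly the Fourier multiplier $ik|k|^{-2s}$ applied to $\widehat{r-\bar r}(k)$, $k\neq0$. This matches the periodic version addressed in their Remark~4.3.

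\emph{Step 2: the underlying mechanism.} Since $\min r_0>0$, I would use the symmetrizing variable $\sigma=\frac{2\sqrt{a\gamma}}{\gamma-1}r^{(\gamma-1)/2}$. In the variables $(\sigma,\vv)$ the system becomes a symmetric hyperbolic system
\[
\partial_t\sigma+\vv\cdot\Grad_x\sigma+\tfrac{\gamma-1}{2}\sigma\Div_x\vv=0,\qquad
\partial_t\vv+\vv\cdot\Grad_x\vv+\tfrac{\gamma-1}{2}\sigma\Grad_x\sigma=-\Grad_x\Phi .
\]
The new term $\Grad_x\Phi=\Grad_x(-\Delta)^{-s}(r-\bar r)$ is a multiplier of order $1-2s<1$. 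Hence it is bounded $H^m\to H^{m-1+2s}\subset H^{m}$ up to a loss of at most one derivative. More precisely, it is bounded $H^m\to H^m$ on the high modes, because $1-2s\le 1$ and $m-1+2s\geq m-1$.

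So I would first check whether $1-2s\le0$. If not, one full derivative is not gained. However, the force enters only as a source in the $\vv$-equation, and $\|\Grad_x\Phi\|_{H^m}\lesssim\|r-\bar r\|_{H^{m+1-2s}}$. This is the main obstacle: for $s<\tfrac12$ the source costs $1-2s$ derivatives of $r$.

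I would resolve it as Choi--Jeong do, by estimating $\Lambda^m(r-\bar r)$ together with an extra $\dot H^{m+1-2s}$-type quantity. Equivalently, I would pair $\Lambda^m\Grad_x\Phi$ with $\Lambda^m\vv$ and integrate by parts using the continuity equation. This produces $\tfrac{d}{dt}\tfrac12\|\Lambda^m\Phi\|_{\dot H^s}^2$ plus commutator terms controlled by $\|\Grad_x\vv\|_{L^\infty}$ and $\|\Grad_x r\|_{L^\infty}$ via Kato--Ponce. This is exactly the interaction-energy structure of \eqref{en_def_ext} at the level of $m$ derivatives, and it closes because $m>\tfrac d2+1$ gives $H^{m-1}\subset L^\infty$.

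\emph{Step 3: conclusion.} The Gr\"onwall bound on $\|(\sigma-\bar\sigma,\vv)\|_{H^m}$ holds on a time interval depending only on the data. From it, a standard Friedrichs mollification and compactness argument gives existence, and an $L^2$ difference estimate gives uniqueness. Continuity in $H^m$ follows from the usual weak-continuity and energy-equality argument, and $C^1([0,T^*);H^{m-1})$ from the equations.

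Finally, $\min r>0$ on $[0,T]$ for every $T<T^*$ follows from transport along the characteristics of $\vv$: we have $r(t,X(t))=r_0\exp\bigl(-\int_0^t\Div_x\vv\bigr)$ with $\Div_x\vv\in L^\infty$. Since $m>\tfrac d2+1$ gives $H^m\subset C^1$, this also places the solution in the class \eqref{strong-class}.
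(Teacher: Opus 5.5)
Your Step~1 is the same reduction the paper makes: match $\alpha=d-2s$, identify the repulsive case with $c_K<0$, and use the pressure case of \cite{CJ2022}. However, the hypothesis check you announce skips the one point the paper actually has to verify. As recorded in Remark~\ref{rem:CJ-scope}, the pressure theory of \cite{CJ2022} in general carries the restriction $1<\gamma\le\frac53$. The paper cites \cite[Rmk.~4.3]{CJ2022} precisely because that remark lifts the restriction when the density is bounded below or the interaction is repulsive, and both hold here, since $\min r_0>0$ and the kernel is repulsive. You instead attribute Remark~4.3 to the periodic setting and simply declare every $\gamma>1$ to be ``their pressure case''. As written, your dictionary therefore only delivers the statement for $\gamma\le\frac53$. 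The missing sentence is that $\min r_0>0$ (together with repulsiveness) places you under Rmk.~4.3, so no upper bound on $\gamma$ is imposed.

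Your orientation sketch in Step~2 also contains false claims you should not leave in.
\begin{itemize}
\item A multiplier of order $1-2s>0$ maps $H^m$ into $H^{m-1+2s}$. That space is \emph{not} contained in $H^m$ for $s<\frac12$, and the multiplier is not bounded on $H^m$ on high modes. Your own next sentences, which identify a loss of $1-2s$ derivatives, contradict these two claims.
\item The proposed cure conflates two different quantities. An extra $\dot H^{m+1-2s}$ bound on the density cannot be propagated from $r_0-\bar r\in H^m$. By contrast, pairing $\Lambda^m\Gradx\Phi$ with $\Lambda^m\vv$ and using the continuity equation produces $\|\Lambda^m\Phi\|_{\dot H^s}^2=\|r-\bar r\|_{\dot H^{m-s}}^2$. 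This is a \emph{lower}-order quantity that enters the energy with a favourable sign in the repulsive case, and only this second mechanism is the right one.
\item The continuity equation supplies $r\,\Divx\vv$ rather than $\Divx\vv$. So the exact time derivative appears only after weighting by $1/r$, which generates further commutator terms.
\item An ``$L^2$ difference estimate'' for uniqueness meets the same $1-2s$ loss in $\Gradx(-\Delta)^{-s}(r_1-r_2)$. It closes only if $\|r_1-r_2\|_{\dot H^{-s}}^2$ is included in the functional, which is exactly the relative energy of Section~\ref{sec:relative}.
\end{itemize}
None of this is needed if you rest on the citation, as the paper does. In that case, though, the $\gamma$-range verification above is the entire content of the argument.
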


\begin{rmk}[Strict positivity of the density in \eqref{strong-class}]\label{rem:strong-class}
	If $\vv\in L^1(0,T;W^{1,\infty}(\T^d))$, the continuity equation
	$\partial_t r+\Divx(r\vv)=0$ propagates positivity along the flow of $\vv$ (DiPerna--Lions
	\cite{DiL1989}):
	\[
	\min_{\mathbb T^d} r(t)\ \ge\ \Big(\min_{\mathbb T^d} r_0\Big)
	\exp\!\Big(-\!\int_0^t\|\Divx\vv\|_{L^\infty}\,d\tau\Big)>0 ,
	\]
	and $r$ stays bounded above. Thus $\min_{\T^d} r_0>0$ already forces
	$\underline r:=\inf_{[0,T]\times\T^d} r>0$, so the uniform lower bound in \eqref{strong-class}
	is automatic.
\end{rmk}

\begin{rmk}[Applicability of Proposition~\ref{prop:local-existence}]\label{rem:CJ-scope}
	Two features of \cite{CJ2022} matter here. First, the pressure theory in general carries the
	restriction $1<\gamma\le\tfrac53$, but this is lifted whenever the density is bounded below or
	the potential is repulsive \cite[Rmk.~4.3]{CJ2022}; on $\mathbb T^d$ with $\min r_0>0$ both hold,
	so no upper bound on $\gamma$ is imposed. Second, since $m>\tfrac d2+1$, Sobolev embedding gives
	$r,\vv\in C^1([0,T]\times\mathbb T^d)$ with $\Gradx\vv\in L^\infty((0,T)\times\T^d)$; together with
	Remark~\ref{rem:strong-class} this places $(r,\vv)$ in \eqref{strong-class} for every $T<T^*$.
	Hence Theorem~\ref{thm:ws} applies on $[0,T^*)$: any dissipative weak solution with the same data
	coincides there with $(r,\vv)$.
\end{rmk}

\section{Soundness of the Definition}
\label{sec:justification}

Definition~\ref{def:dissipative_weak_sol} contains three ingredients that do not appear in the classical formulation: the stress tensor $\mathbb T[U]$, through which the nonlocal force is written; the defect measures, which record what is lost in the nonlinear and nonlocal terms; and the compatibility conditions binding the two. This section accounts for all three. We first derive $\mathbb T[U]$ and the identity \eqref{T} by direct computation. We then show that the definition is consistent: any reasonable approximation scheme produces, in the limit, a solution in the sense of Definition~\ref{def:dissipative_weak_sol}, with the defect measures arising precisely as the weak limits the nonlinear terms fail to attain. Finally, we verify that the defects are genuine: a dissipative solution that happens to lie in the strong class \eqref{strong-class} has all four defect measures equal to zero, and is therefore a classical solution. The notion thus extends the classical one without enlarging it among smooth fields.

\subsection{Structure of the tensor $\mathbb{T}_{\rm ER}[\cdot ]$ }
\label{sec:stress}

We recover an identity similar to the stress identity in $\Omega=\mathbb{T}^d\times\mathbb{R}_+$, where the operator is local. Write $(x,z)\in\mathbb{R}^{d+1}$ with $x_{d+1}=z$, and let indices $i,j$ range over $\{1,\dots,d+1\}$, so that $\Grad_{x,z}=(\partial_{x_1},\dots,\partial_{x_d},\partial_z)$.

\subsubsection*{Formal computation}
\label{sec:stress-formal}

We recall the full extended $(d+1)\times(d+1)$ tensor introduced in \eqref{Tfull}:
\begin{equation*}
	\mathbb{T}_{\mathrm{ext}}[U] = z^{1-2s} \left( \Grad_{x,z} U \otimes \Grad_{x,z} U - \tfrac12 |\Grad_{x,z} U|^2\, \mathbb{I}_{d+1} \right),
\end{equation*}
whose entries are
\begin{equation}\label{Tfull-entries}
	\big(\mathbb{T}_{\mathrm{ext}}[U]\big)_{ij}
	= z^{1-2s}\, \partial_{x_i} U\, \partial_{x_j} U
	- \tfrac12\, z^{1-2s}\, |\Grad_{x,z} U|^2\, \delta_{ij},
	\qquad i,j \in \{1,\dots,d+1\}.
\end{equation}
The goal is to compute $\Div_{x,z}\mathbb{T}_{\mathrm{ext}}[U]$, whose $j$-th component is $\sum_{i} \partial_{x_i} \big(\mathbb{T}_{\mathrm{ext}}[U]\big)_{ij}$. A direct computation shows 
\begin{align}\label{div-dyadic}
	\sum_{i} \partial_{x_i}\!\left( z^{1-2s}\, \partial_{x_i} U\, \partial_{x_j} U \right)
	&= \left( \sum_{i} \partial_{x_i}\!\left( z^{1-2s}\, \partial_{x_i} U \right) \right) \partial_{x_j} U
	+ \sum_{i} z^{1-2s}\, \partial_{x_i} U\, \partial_{x_i} \partial_{x_j} U \nonumber \\
	&= \Div_{x,z}\!\left( z^{1-2s}\, \Grad_{x,z} U \right) \partial_{x_j} U
	+ \tfrac12\, z^{1-2s}\, \partial_{x_j}\!\left( \sum_{i} \big(\partial_{x_i} U\big)^2 \right) \nonumber \\
	&= \tfrac12\, z^{1-2s}\, \partial_{x_j} |\Grad_{x,z} U|^2 .
\end{align}
In the second line we used the symmetry of second derivatives to write $\sum_i \partial_{x_i} U\, \partial_{x_i}\partial_{x_j} U = \tfrac12 \partial_{x_j} \sum_i (\partial_{x_i} U)^2$, and in the third line the bulk equation $\Div_{x,z}\big( z^{1-2s} \Grad_{x,z} U \big) = 0$, which annihilates the first term.
On the other hand,  the Kronecker delta collapses the sum to the single index $i=j$, and the product rule gives
\begin{equation}\label{div-iso}
	\sum_i \partial_{x_i}\!\left( -\tfrac12\, z^{1-2s}\, |\Grad_{x,z} U|^2\, \delta_{ij} \right)
	= -\tfrac12\, \partial_{x_j}\!\left( z^{1-2s} \right) |\Grad_{x,z} U|^2
	- \tfrac12\, z^{1-2s}\, \partial_{x_j} |\Grad_{x,z} U|^2 .
\end{equation}
Thus, adding \eqref{div-dyadic} and \eqref{div-iso}, we obtain
\begin{equation}\label{eq:div-j}
	\big( \Div_{x,z}\mathbb{T}_{\mathrm{ext}}[U] \big)_j
	= -\tfrac12\, \partial_{x_j}\!\left( z^{1-2s} \right) |\Grad_{x,z} U|^2 .
\end{equation}
The weight depends on $z = x_{d+1}$ alone, so $\partial_{x_j}(z^{1-2s}) = 0$ for $j \le d$, while
\[
\partial_{x_{d+1}}\!\left( z^{1-2s} \right) = \partial_z\!\left( z^{1-2s} \right) = (1-2s)\, z^{-2s}.
\]
Hence the divergence has no horizontal component, and \eqref{eq:div-j} reduces to
\begin{equation}\label{div-Tfull}
	\Div_{x,z}\mathbb{T}_{\mathrm{ext}}[U]
	= \begin{pmatrix} 0 \\[2pt] -\dfrac{1-2s}{2}\, z^{-2s}\, |\Grad_{x,z} U|^2 \end{pmatrix}.
\end{equation}
The horizontal component of the divergence vanishes identically; only a vertical part survives. Since the field equations \eqref{eq:ERm} is confined only in $\T^d$, the vertical component \eqref{div-Tfull} will be eliminated when we choose of test function in depending only on $(t,x)$

\subsubsection*{Weak identity and horizontal projection}
\label{sec:stress-weak}

Testing \eqref{Tfull} against a full field $\pmb{\Psi}=(\pmb{\Psi}_x,\Psi_z)^T$ and applying Green's theorem on $\Omega$, it holds
\begin{equation}\label{green}
	\iint_{\Omega} \mathbb{T}_{\mathrm{ext}}[U] : \Grad_{x,z}\pmb{\Psi}\,dx\,dz
	= \int_{\mathbb{T}^d} \lim_{z\to0}\big(\mathbb{T}_{\mathrm{ext}}[U]\,\mathbf{n}\big)\cdot\pmb{\Psi}\,dx
	- \iint_{\Omega} \big(\Div_{x,z}\mathbb{T}_{\mathrm{ext}}[U]\big)\cdot\pmb{\Psi}\,dx\,dz,
\end{equation}
where $\mathbf{n}=(0,\dots,0,-1)^T$ is the outward normal to $\Omega$ at $z=0$; the far-field decay of $U$ kills the contribution at $z=\infty$.

\medskip
\noindent\emph{The boundary term.} Splitting into horizontal and vertical components, and using $|\Grad_{x,z}U|^2 = |\Grad_x U|^2 + (\partial_z U)^2$, we have
\begin{equation}\label{Tn}
	\mathbb{T}_{\mathrm{ext}}[U]\,\mathbf{n}
	= \begin{pmatrix}
		-\, z^{1-2s}\, \partial_z U\, \Grad_x U \\[4pt]
		-\, z^{1-2s} \Big( (\partial_z U)^2 - \tfrac12 |\Grad_{x,z} U|^2 \Big)
	\end{pmatrix}
	= \begin{pmatrix}
		-\, z^{1-2s}\, \partial_z U\, \Grad_x U \\[4pt]
		-\, \tfrac12\, z^{1-2s} \Big( (\partial_z U)^2 - |\Grad_x U|^2 \Big)
	\end{pmatrix}.
\end{equation}
The horizontal component is precisely the object controlled by the extension: by the trace condition
\[
-c_{d,s} \lim_{z\to0} z^{1-2s}\, \partial_z U = \vr - \bar\vr,
\]
we obtain
\begin{equation*}
	\lim_{z\to0} \Big( -\, z^{1-2s}\, \partial_z U\, \Grad_x U \Big)
	= c_{d,s}^{-1}\, (\vr - \bar\vr)\, \Grad_x U|_{z=0} .
\end{equation*}

\medskip
\noindent\emph{The extension term.} By \eqref{div-Tfull} the divergence is purely vertical, so it pairs only with $\Psi_z$:
\begin{equation}\label{bulk-term}
	- \iint_{\Omega} \big( \Div_{x,z}\mathbb{T}_{\mathrm{ext}}[U] \big)\cdot\pmb{\Psi}\,dx\,dz
	= \iint_{\Omega} \tfrac{1-2s}{2}\, z^{-2s}\, |\Grad_{x,z} U|^2\, \Psi_z \,dx\,dz .
\end{equation}

\medskip
\noindent Substituting \eqref{Tn}--\eqref{bulk-term} into \eqref{green} gives the general identity
\begin{equation}\label{w-Tfull}
	\begin{aligned}
		\iint_{\Omega} \mathbb{T}_{\mathrm{ext}}[U] : \Grad_{x,z}\pmb{\Psi}\,dx\,dz
		&= c_{d,s}^{-1}\int_{\mathbb{T}^d} (\vr-\bar\vr)\,\Grad_x U|_{z=0}\cdot\pmb{\Psi}_x\,dx \\
		&\quad - \int_{\mathbb{T}^d} \lim_{z\to0}\Big[ \tfrac12\, z^{1-2s} \big((\partial_z U)^2 - |\Grad_x U|^2\big)\, \Psi_z \Big]dx \\
		&\quad + \iint_{\Omega} \tfrac{1-2s}{2}\, z^{-2s}\,|\Grad_{x,z}U|^2\, \Psi_z \,dx\,dz .
	\end{aligned}
\end{equation}
We note that, in the RHS of \eqref{w-Tfull} only the first term is well-defined at finite energy. The remaining two are driven entirely by the vertical component $\Psi_z$: the second term in \eqref{w-Tfull} requires the boundary traces of $z^{1-2s}(\partial_z U)^2$ and $z^{1-2s}|\Grad_x U|^2$, and the third term is of $z^{-2s}|\Grad_{x,z}U|^2$. These are not controlled by the energy norm $\dot H^1(\Omega, z^{1-2s})$, which bounds only $z^{1-2s}|\Grad_{x,z}U|^2$ in $L^1(\Omega)$.\par 
The momentum equation is confined only on $\mathbb{T}^d$, so the test velocity is horizontal. This imposes that the test field to have no vertical component, more precisely
\begin{equation*}
	\pmb{\Psi}_{\mathrm{ext}}(t,x,z) = \big(\pmb{\psi}(t,x),\, 0\big)^T,
	\qquad\text{i.e.}\qquad \Psi_z \equiv 0 \text{ and } \Grad_{x,z}\pmb{\Psi}_{\mathrm{ext}}
	= \begin{pmatrix} \Grad_x \pmb{\psi} & 0 \\ 0 & 0 \end{pmatrix}.
\end{equation*}
So in in \eqref{w-Tfull} only the upper-left $d\times d$ block of $\mathbb{T}_{\mathrm{ext}}[U]$ survives; the vertical row and column, which carries the ill-defined terms , are annihilated. From \eqref{Tfull-entries} for $i,j \le d$, we have the following identity
\begin{equation*}
	\mathbb{T}_{\mathrm{ext}}[U] : \Grad_{x,z}\pmb{\Psi}_{\mathrm{ext}} = \mathbb{T}_{\rm ER}[U] : \Grad_x \pmb{\psi},
	\qquad
	\mathbb{T}_{\rm ER}[U] := z^{1-2s}\left( \Grad_x U \otimes \Grad_x U - \tfrac12 |\Grad_{x,z}U|^2\, \mathbb{I}_d \right),
\end{equation*}
exactly as defined in \eqref{T}. Hence \eqref{w-Tfull} reduces to the identity
\begin{equation}\label{w-T}
	c_{d,s}\iint_{\Omega} \mathbb{T}_{\rm ER}[U] : \Grad_x\pmb{\psi}\,dx\,dz
	= \int_{\mathbb{T}^d} (\vr-\bar\vr)\,\Grad_x U|_{z=0}\cdot\pmb{\psi}\,dx .
\end{equation}

\subsection{Justification of Definition via Consistent Approximation}
\label{sec:stability}

\emph{Sequential stability} means that the solution class is closed under weak limits of approximations. Suppose a sequence obeys the equations up to a vanishing error, and satisfies the energy inequality uniformly. Then, along a subsequence, it converges to a solution of the same class.

This is what makes the weak formulation useful rather than arbitrary. The class is not invented to fit a definition; it is the class that arises as the limit of objects one can actually construct --- vanishing viscosity limits, Galerkin approximations, energy-stable numerical schemes. 

\subsubsection{Approximating sequences and uniform bounds}
\label{sec:stability-bounds}

The defect measures are not ad hoc: they are exactly what survives the limit in an energy-bounded family of approximations. We first make the hypothesis precise.

\begin{df}[Consistent approximating sequence]\label{def:consistent}
	A family $(\vr_n, \vu_n)_{n\in\mathbb{N}}$ of smooth functions on $[0,T]\times\mathbb{T}^d$, with $\vr_n > 0$, is a \emph{consistent approximating sequence} for \eqref{eq:ERc}--\eqref{eq:ERr} if:
	\begin{enumerate}
		\item[(A1)] \textbf{(Bounded initial data.)} The initial data $(\vr_{0,n}, \vu_{0,n})$ satisfy $\vr_{0,n}\to\vr_0$ in $L^\gamma(\mathbb{T}^d)$, $\vr_{0,n}-\Ov{ \vr}_{0,n}\to\vr_0-\Ov{\vr}_0$ in $\dot H^{-s}(\T^d)$, $\vr_{0,n}\vu_{0,n}\to \vm_0$ in $L^{\frac{2\gamma}{\gamma+1}}(\mathbb{T}^d;\mathbb{R}^d)$, and
		\begin{equation}\label{app:id}
			\limsup_{n\to\infty} \mathcal{E}\big[\vr_{0,n},\vu_{0,n}\big] \;\le\; \mathcal{E}_0 := \mathcal{E}\big[\vr_0,\vu_0\big] \;<\; \infty,
			\end{equation}
		where $\displaystyle \mathcal{E}[\vr,\vu] = \int_{\mathbb{T}^d}\big(\tfrac12\vr|\vu|^2 + P(\vr)\big)dx + \tfrac12\|\vr-\bar\vr\|_{\dot H^{-s}(\mathbb{T}^d)}^2$ is the energy \eqref{en_def_ext}.
		\item[(A2)] \textbf{(Energy inequality.)} For all $\tau\in[0,T]$,
		\begin{equation}\label{app_ie}
			\mathcal{E}\big[\vr_n(\tau),\vu_n(\tau)\big] + e_n^{e}(\tau)\;\le\; \mathcal{E}\big[\vr_{0,n},\vu_{0,n}\big].
		\end{equation}
		\item[(A3)] \textbf{(Consistency.)} The pair $(\vr_n,\vu_n)$ solves \eqref{eq:ERc}--\eqref{eq:ERm} up to errors $e_n^{\,c} \in L^1((0,T)\times \T^d), \mathbf{e}_n^{\,m} \in L^1((0,T)\times \T^d;\R^d) $ and $ e_n^{e} \in L^{\infty}(0,T)$ that vanish in the sense of distributions: More precisely, we assume that for all $\phi\in C_c^\infty([0,T)\times\mathbb{T}^d)$, $\pmb\psi\in C_c^\infty([0,T)\times\mathbb{T}^d;\mathbb{R}^d)$ and for all $\tau \in (0,T)$, the following holds:
		\[
		\big\langle e_n^{\,c}, \phi \big\rangle_{L^1_{t,x}, C_{t,x}} \to 0,
		\qquad
		\big\langle \mathbf{e}_n^{\,m}, \pmb\psi \big\rangle_{L^1_{t,x}, C_{t,x}} \to 0, \qquad  e_n^{e}(\tau) \rightarrow 0
		\qquad\text{as } n\to\infty .
		\]
	\end{enumerate}
\end{df}
The goal is to show that if $(\vr_n,\vu_n)$ is a consistent approximation, then we obtain a dissipative solution following the definition as a limiting procedure.\par 

\textbf{{Uniform Bounds.}}
\medskip
Combining (A1) and (A2),
\begin{equation*}
	\sup_{n\in\mathbb{N}}\ \sup_{t\in[0,T]} \left[ \int_{\mathbb{T}^d}\!\Big( \tfrac12 \vr_n|\vu_n|^2 + P(\vr_n) \Big)dx + \tfrac12\big\|\vr_n-\bar\vr\big\|_{\dot H^{-s}(\mathbb{T}^d)}^2 \right] \;\le\; \mathcal{E}_0 + C,
\end{equation*}
the constant $\mathcal{E}_0$ depending only on the limiting data $(\vr_0,\vu_0)$, consistency condition ensures that the term $ e_n^{e}(t) $ is uniformly bounded in $L^\infty(0,T)$ . using $P(\vr)=\tfrac{a}{\gamma-1}\vr^\gamma$ and each term being nonnegative, we obtain
\begin{equation}\label{app:bd1}
	\sup_n \big\| \vr_n \big\|_{L^\infty(0,T; L^\gamma)} +
	\sup_n \big\| \sqrt{\vr_n}\,\vu_n \big\|_{L^\infty(0,T; L^2)} +
	\sup_n \big\| \vr_n - \bar\vr \big\|_{L^\infty(0,T; \dot H^{-s})} \leq C,
\end{equation}
where $C$ is independent of $n$. As a simple consequence we have $ 	\big\| \vr_n\vu_n \big\|_{L^\infty(0,T;L^{\frac{2\gamma}{\gamma+1}}(\mathbb{T}^d))}
\leq C$.

\medskip
The third bound in \eqref{app:bd1} is exactly what makes the extension \eqref{CS-ext} (More precisely, use of Proposition \ref{prop:CS_weak}) available for $\nabla \Phi_n$. For each $n$ and a.e.\ $t$, let $U_n(t)$ be the unique solution of \eqref{CS_weak} with datum $\vr_n(t)-\bar\vr \in \dot H^{-s}(\mathbb{T}^d)$; by the isometry \eqref{CS-prop1},
\begin{equation}\label{app-bd-ext1}
	\frac{c_{d,s}}{2}\iint_{\Omega} z^{1-2s}\big|\Grad_{x,z}U_n(t)\big|^2\,dx\,dz
	= \frac12 \big\| \vr_n(t)-\bar\vr \big\|_{\dot H^{-s}(\mathbb{T}^d)}^2 ,
\end{equation}
so \eqref{app:bd1} transfers verbatim to the extended energy:
\begin{equation}\label{app:ba1-ext}
	\sup_n\ \sup_{t\in[0,T]} \left[ \int_{\mathbb{T}^d}\!\Big( \tfrac12 \vr_n|\vu_n|^2 + P(\vr_n) \Big)dx + \tfrac{c_{d,s}}{2}\iint_{\Omega} z^{1-2s}\big|\Grad_{x,z}U_n\big|^2\,dx\,dz \right] \;\leq C .
\end{equation}

\medskip
\noindent

\textbf{{Convergence.} } From the bound \eqref{app:ba1-ext}, using  Banach--Alaoglu, along a subsequence (not relabelled),
\begin{align}\label{app:conv}
	\begin{split}
			\vr_n &\overset{*}{\rightharpoonup} \vr \qquad \text{ in } L^\infty_{\text{weak-(*)}}\big(0,T;L^\gamma(\mathbb{T}^d)\big), \\
		\vr_n\vu_n &\overset{*}{\rightharpoonup} \mathbf{m} \qquad \text{ in } L^\infty_{\text{weak-(*)}}\big(0,T;L^{\frac{2\gamma}{\gamma+1}}(\mathbb{T}^d;\mathbb{R}^d)\big), \\
		U_n &\overset{*}{\rightharpoonup} U \qquad \text{ in } L^\infty_{\text{weak-(*)}}\big(0,T;\dot H^1(\Omega, z^{1-2s})\big). 
	\end{split}
\end{align}

Let us make two observations :

\begin{itemize}[leftmargin=*]
	\item First, \eqref{app:conv} produces a momentum field $\mathbf m$, not a velocity. We observe that the map
	\[
	[0,\infty)\times\mathbb R^d\ni(\vr,\mathbf m)\longmapsto \frac{|\mathbf m|^2}{\vr}
	\quad\Big(=0 \text{ if }\mathbf m=0,\ =+\infty \text{ if }\vr=0\neq\mathbf m\Big)
	\]
	is convex and lower semicontinuous. Hence, we have
	\[
	\int_{\mathbb T^d}\frac{|\mathbf m|^2}{\vr}\,\dx \ \le\ \liminf_{n\to\infty}\int_{\mathbb T^d}\frac{|\vr_n\vu_n|^2}{\vr_n}\,\dx \ \leq C,
	\]
	so the limit functional is finite. Finiteness forces $\mathbf m$ to vanish wherever $\vr=0$, i.e.\ $\mathbf m\ll\vr$; the Radon--Nikod\'ym density $\vu:=\dfrac{d\mathbf m}{d\vr}$ is then defined $\vr$-a.e.\ . 
	
	\item Second, \eqref{app:conv} gives convergence weak-$*$ in time, whereas
	Definition~\ref{def:dissipative_weak_sol} requires
	\[
	\vr\in C_{\mathrm{weak}}([0,T];L^\gamma(\T^d))
	\quad\text{and}\quad
	\mathbf m\in C_{\mathrm{weak}}([0,T];L^{\frac{2\gamma}{\gamma+1}}(\T^d)).
	\]
	This is recovered from the equations: \eqref{eq:w-ERc} and \eqref{eq:w-ERm} bound $\partial_t\vr_n$ and $\partial_t\mathbf m_n$ uniformly in a negative-order space, so $\displaystyle t\mapsto\int_{\T^d}\vr_n\phi \dx $ and $t\mapsto\int_{\T^d}\mathbf m_n\cdot\pmb\psi$ are equicontinuous; Arzel\`a--Ascoli and density of smooth test functions then place $\vr,\mathbf m$ in the class \eqref{regclass}.
\end{itemize}

\subsubsection{Defect measures}
\label{sec:stability-defect}
Along the subsequence \eqref{app:conv} we recall $\Ov{F(\cdot)}$ for the weak-$*$ limit of $F(\vr_n,\mathbf m_n)$ (§\ref{sec:notation}). The convective ($\vr_n \vu_n \otimes \vu_n$) , pressure / pressure potential ($p(\vr_n)/P(\vr_n)$) and stress terms ($\mathbf{T}[U_n]$) are nonlinear, so they do not commute with $\Ov{\ \cdot\ }$. We work in the conservative variables $(\vr,\mathbf m)$.

\medskip
\noindent\emph{Fluid part.} For each fixed $\xi\in\mathbb R^d$ the scalar map
\begin{equation}\label{conv1}
	(\vr,\mathbf m)\ \longmapsto\ \frac{\mathbf m\otimes\mathbf m}{\vr}:(\xi\otimes\xi) \;=\;
	\begin{cases}
		\dfrac{|\mathbf m\cdot\xi|^2}{\vr}, & \vr>0,\\[4pt]
		0, & \vr=\mathbf m=0,\\[2pt]
		+\infty, & \text{otherwise},
	\end{cases}
\end{equation}
is convex and lower semicontinuous on $\mathbb R\times\mathbb R^d$, and $\vr\mapsto p(\vr)=a\vr^\gamma$, $\vr\mapsto P(\vr)=\tfrac{a}{\gamma-1}\vr^\gamma$ are convex for $\gamma>1$. By weak lower semicontinuity the corresponding gaps are nonnegative, so we define
\begin{equation}\label{fluid-defect}
	\mathcal R_{fluid} := \left( \Ov{\frac{\mathbf m\otimes\mathbf m}{\vr}} - \frac{\mathbf m\otimes\mathbf m}{\vr} \right) + \Big( \Ov{p(\vr)} - p(\vr) \Big)\mathbb I_d
	\ \in\ L^\infty_{\mathrm{weak}\text{-}*}\big(0,T;\mathcal M^+(\mathbb T^d;\mathbb R^{d\times d}_{\mathrm{sym}})\big),
\end{equation}
and 
\begin{equation}\label{fluid-energy-defect}
	\mathcal E_{fluid} := \frac12\left( \Ov{\frac{|\mathbf m|^2}{\vr}} - \frac{|\mathbf m|^2}{\vr} \right) + \Big( \Ov{P(\vr)} - P(\vr) \Big)
	\ \in\ L^\infty_{\mathrm{weak}\text{-}*}\big(0,T;\mathcal M^+(\mathbb T^d)\big).
\end{equation}
Considering $$A:=\Ov{|\mathbf m|^2/\vr}-|\mathbf m|^2/\vr\ge0, \; B:=\Ov{P(\vr)}-P(\vr)\ge0 \text{ and }p=(\gamma-1)P $$ we have 
\[
\operatorname{tr}\mathcal R_{fluid}=A+d(\gamma-1)B,\qquad \mathcal E_{fluid}=\tfrac12 A+B,
\]
and therefore \eqref{dm-compf} holds with $\lambda_1=\min\{\tfrac12,\tfrac1{d(\gamma-1)}\}$, $\lambda_2=\max\{\tfrac12,\tfrac1{d(\gamma-1)}\}$.

\medskip
\noindent\emph{Extension part.} Here the mechanism is entirely different, and it is worth isolating. We recall \eqref{app:conv} that states that $G_n \rightharpoonup G$ weakly-$*$ in $L^\infty\big(0,T;L^2(\Omega;\mathbb R^{d+1})\big)$ where
\begin{equation*}
	G_n := z^{\frac{1-2s}{2}}\,\text{ and } \,\Grad_{x,z}U_n, \qquad G := z^{\frac{1-2s}{2}}\,\Grad_{x,z}U.
\end{equation*}
Hence, we define 
\begin{align}
	\mathcal E_{ext}=\frac{c_{d,s}}{2} \lr{\Ov{|G|^2} - |G|^2}
\end{align}

Since
\[
z^{1-2s}\,\Grad_x U\otimes\Grad_x U = (G\otimes G)_{xx},
\qquad
z^{1-2s}\,|\Grad_{x,z}U|^2 = \operatorname{tr}(G\otimes G),
\]
the tensor $\mathbb{T}_{\rm ER}$ as in \eqref{T} is the image of $G\otimes G$ under a \emph{linear} map:
\begin{equation}\label{T-linear}
	\mathbb T_{\rm ER}[U] = \mathcal L\big( G\otimes G \big),
	\qquad
	\mathcal L : \mathbb R^{(d+1)\times(d+1)}_{\mathrm{sym}} \to \mathbb R^{d\times d}_{\mathrm{sym}},
	\quad
	\mathcal L(A) := A_{xx} - \tfrac12 (\operatorname{tr}A)\, \mathbb I_d .
\end{equation}
Next we consider $G \mapsto G\otimes G$, acting on a sequence that converges weakly-$*$ in a \emph{Hilbert} space. For every $\xi\in\mathbb R^{d+1}$,
\[
(\xi\otimes\xi):(G\otimes G)=|\,\xi\cdot G\,|^2,
\]
and the map $G\mapsto |\xi\cdot G|^2$ is convex. Hence weak lower semicontinuity yields
\[
\Ov{|\,\xi\cdot G\,|^2}-|\,\xi\cdot G\,|^2\ge0,
\]
or equivalently,
\begin{equation}\label{Rfull}
	\mathfrak R := c_{d,s}\Big( \Ov{G\otimes G} - G\otimes G \Big)
	\ \in\ \ L^\infty_{\mathrm{weak}\text{-}*}\big(0,T;\mathcal M^+(\Ov{\Omega};\mathbb R^{{d+1}\times {d+1}}_{\mathrm{sym}})\big)
\end{equation}
Thus $\mathfrak R$ is a symmetric positive semidefinite matrix-valued measure. 

Writing $\mathfrak R = \left(\begin{smallmatrix}\mathfrak R_{xx} & \mathfrak R_{xz}\\ \mathfrak R_{zx} & \mathfrak R_{zz}\end{smallmatrix}\right)$, the block $\mathfrak R_{xx}\in\mathcal M^+(\overline\Omega;\mathbb R^{d\times d}_{\mathrm{sym}})$ and the scalar $\mathfrak R_{zz}\ge0$. The bulk energy defect is half the trace,
\begin{equation}\label{Eext}
	\mathcal E_{ext} := \tfrac12 \operatorname{tr}\mathfrak R = \tfrac12\big( \operatorname{tr}\mathfrak R_{xx} + \mathfrak R_{zz} \big) \ \ L^\infty_{\mathrm{weak}\text{-}*}\big(0,T;\mathcal M^+(\Ov{\Omega} )\big)
\end{equation}
and, since $\mathcal L$ is linear and therefore commutes with weak-$*$ limits,
\begin{equation}\label{Rext}
	c_{d,s}\,\mathcal R_{ext} := c_{d,s}\Big( \Ov{\mathbb T_{\rm ER}[U]} - \mathbb T_{\rm ER}[U] \Big) = \mathcal L(\mathfrak R) = \mathfrak R_{xx} - \mathcal E_{ext}\,\mathbb I_d .
\end{equation}
The map $\mathcal L$ is linear but \emph{not} positivity preserving, so $\mathcal R_{ext}$ is in general a signed measure. 

\subsubsection*{Structure of the extension defect}
\label{sec:stability-defect-ext}

Identity \eqref{Rext} exhibits $c_{d,s}\mathcal R_{ext}$ as the difference of two \emph{nonnegative} matrix-valued measures
\begin{equation}\label{Rext-str}
	c_{d,s}\,\mathcal R_{ext} = \underbrace{\mathfrak R_{xx}}_{\succeq\,0} \;-\; \underbrace{\mathcal E_{ext}\,\mathbb I_d}_{\succeq\,0},
	\qquad
	\mathcal E_{ext} = \tfrac12\operatorname{tr}\mathfrak R ,
\end{equation}
the entire indefiniteness being carried by the second term, we use the notation introcuded in Section \ref{sec:notation}. The following lemma collects what survives the loss of sign; part (iv) is what the relative energy estimate consumes.

\begin{lemma}[Structure of the defect measure]\label{lem:Rext-structure}
	Let $\mathfrak R \in L^\infty_{\mathrm{weak}\text{-}*}\big(0,T;\mathcal M^+(\overline\Omega;\mathbb R^{(d+1)\times(d+1)}_{\mathrm{sym}})\big)$, and let $\mathcal E_{ext},\mathcal R_{ext}$ be given by \eqref{Eext}, \eqref{Rext}. Then, for a.e.\ $t\in(0,T)$, the following hold as measures on $\overline\Omega$:
	\begin{enumerate}[leftmargin=*]
		\item[\rm(i)] \emph{(Trace bound.)} $\ 0 \ \le\ \operatorname{tr}\mathfrak R_{xx} \ \le\ \operatorname{tr}\mathfrak R \;=\; 2\,\mathcal E_{ext}$.
		\item[\rm(ii)] \emph{(Sharp two-sided bound.)} For every $\xi \in \mathbb R^d$ with $|\xi|=1$,
		\begin{equation}\label{defm-do-2}
			-\,\mathcal E_{ext} \ \le\ \big(c_{d,s}\,\mathcal R_{ext}\big) : (\xi\otimes\xi) \ \le\ \mathcal E_{ext} ,
		\end{equation}
		and both bounds are attained.
		\item[\rm(iii)] \emph{(Trace identity.)} $\ \operatorname{tr}\big(c_{d,s}\mathcal R_{ext}\big) = (2-d)\,\mathcal E_{ext} - {\mathfrak R_{zz}}$; in particular this is $\le 0$ for $d\ge2$.
		\item[\rm(iv)] \emph{(Absolute continuity and density bound.)} For a.e.\ $t\in(0,T)$,
		$\mathcal R_{ext}(t)\ll\mathcal E_{ext}(t)$ entrywise, so there is a symmetric density
		$\mathbb D(t)\in L^1\big(\overline\Omega,\mathcal E_{ext}(t);\mathbb R^{d\times d}_{\mathrm{sym}}\big)$
		with $$\mathbb D(t)\,\xi\cdot\xi=\dfrac{d\big((c_{d,s}\mathcal R_{ext}(t)):(\xi\otimes\xi)\big)}{d\,\mathcal E_{ext}(t)}$$
		for every $\xi\in\mathbb R^d$. \\ 
		The bound (ii) upgrades this to
		$\mathbb D(t)\in L^\infty\big(\overline\Omega,\mathcal E_{ext}(t);\mathbb R^{d\times d}_{\mathrm{sym}}\big)$
		with $\|\mathbb D(t)\|_{L^\infty(\overline\Omega,\,\mathcal E_{ext}(t))}\le1$.
		
		\item[\rm(v)] \emph{(Testing against a smooth field.)} For every $\mathbf v$ with $\Grad_x\mathbf v \in L^1(0,T; L^\infty(\mathbb T^d;\mathbb R^{d\times d}))$,
		\begin{equation}\label{Rext-test}
			\Big| \int_0^T \iint_{\overline\Omega} \Grad_x\mathbf v : \big(c_{d,s}\,\mathcal R_{ext}\big) \dt \Big|
			\ \le\ \int_0^T \Big( 2\,\|\Grad_x \mathbf v\|_{L^\infty_x} + \|\Div_x\mathbf v\|_{L^\infty_x} \Big)\, \Big(\iint_{\overline\Omega} d\mathcal E_{ext}\Big) \dt .
		\end{equation}
	
	\end{enumerate}
\end{lemma}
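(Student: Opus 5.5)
The whole lemma follows from the single structural identity \eqref{Rext-str}, namely $c_{d,s}\mathcal R_{ext}=\mathfrak R_{xx}-\mathcal E_{ext}\mathbb I_d$ with $\mathfrak R\succeq0$ and $\mathcal E_{ext}=\tfrac12\operatorname{tr}\mathfrak R$. We will work slice-wise at a.e.\ $t$ and reduce everything to pointwise linear algebra on densities. Concretely, we fix the nonnegative scalar measure $\mu(t):=\operatorname{tr}\mathfrak R(t)=2\mathcal E_{ext}(t)$. Since $\mathfrak R(t)\succeq0$, every entry satisfies $|\mathfrak R_{ij}|\le\tfrac12(\mathfrak R_{ii}+\mathfrak R_{jj})\le\operatorname{tr}\mathfrak R$ as measures (apply positivity with $\xi=e_i\pm e_j$). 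Hence $\mathfrak R(t)\ll\mu(t)$ entrywise, and the Radon--Nikod\'ym theorem (\S\ref{sec:notation}) gives a density $M=d\mathfrak R/d\mu$ that is symmetric, positive semidefinite, and has $\operatorname{tr}M=1$ $\mu$-a.e. Every claim then becomes an eigenvalue statement about such $M$.

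For (i), we take the trace of the upper-left block: $\operatorname{tr}\mathfrak R_{xx}=\sum_{i\le d}\mathfrak R_{ii}$ is a sum of nonnegative diagonal measures and is bounded by $\operatorname{tr}\mathfrak R$, since $\mathfrak R_{zz}\ge0$. For (ii), fix a unit $\xi\in\mathbb R^d$ and embed it as $(\xi,0)\in\mathbb R^{d+1}$. Then $(c_{d,s}\mathcal R_{ext}):(\xi\otimes\xi)=\mathfrak R:(\xi\otimes\xi)-\mathcal E_{ext}$. The lower bound is immediate from $\mathfrak R\succeq0$. For the upper bound, pointwise $M\xi\cdot\xi\le\lambda_{\max}(M)\le\operatorname{tr}M=1$, which gives $\mathfrak R:(\xi\otimes\xi)\le\operatorname{tr}\mathfrak R=2\mathcal E_{ext}$. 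For attainment we exhibit rank-one densities: $M=e_{z}\otimes e_z$ gives $-\mathcal E_{ext}$, and $M=\xi\otimes\xi$ gives $+\mathcal E_{ext}$. These arise from concentrating sequences $G_n$ oscillating purely in the $z$-direction or in the $\xi$-direction respectively; we would only sketch this. For (iii), taking the trace of \eqref{Rext} gives $\operatorname{tr}\mathfrak R_{xx}-d\mathcal E_{ext}=(2\mathcal E_{ext}-\mathfrak R_{zz})-d\mathcal E_{ext}$, which is nonpositive for $d\ge2$.

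For (iv), the absolute continuity $\mathcal R_{ext}\ll\mathcal E_{ext}$ follows from $\mathfrak R\ll\mu=2\mathcal E_{ext}$. The density is $\mathbb D=2M_{xx}-\mathbb I_d$. By (ii) at the density level, $-1\le\mathbb D\xi\cdot\xi\le1$ for all unit $\xi$, $\mathcal E_{ext}$-a.e. Since $\mathbb D$ is symmetric, this is exactly $\|\mathbb D\|_{\mathrm{op}}\le1$. The one technical point is that (ii) holds for each $\xi$ off a $\xi$-dependent null set. We handle this by taking a countable dense set of $\xi$, or more simply by working directly with $M$: $0\preceq M_{xx}$ and $\lambda_{\max}(M_{xx})\le\operatorname{tr}M=1$ hold pointwise wherever $M$ is defined. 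Measurability in $t$ is inherited from the weak-$*$ measurability of $\mathfrak R$.

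For (v), the main point of care, we write the integrand as $\Grad_x\mathbf v:(c_{d,s}\mathcal R_{ext})=\Grad_x\mathbf v:\mathfrak R_{xx}-\Div_x\mathbf v\,\mathcal E_{ext}$. Only the symmetric part of $\Grad_x\mathbf v$ pairs with the symmetric measure $\mathfrak R_{xx}$. Pointwise in densities, $|A:M_{xx}|\le\|A\|_{\mathrm{op}}\operatorname{tr}M_{xx}\le\|A\|\,\operatorname{tr}M$ because $M_{xx}\succeq0$. This yields $|\Grad_x\mathbf v:\mathfrak R_{xx}|\le\|\Grad_x\mathbf v\|_{L^\infty_x}\,2\mathcal E_{ext}$, while the second term is bounded by $\|\Div_x\mathbf v\|_{L^\infty_x}\mathcal E_{ext}$. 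Since $\mathbf v$ is independent of $z$, it is a bounded continuous function on $\overline\Omega$, so the pairing is legitimate. Integrating in $t$ gives \eqref{Rext-test}. The only subtlety is that the norm on $\Grad_x\mathbf v$ should be an operator norm, which is dominated by the Frobenius norm.
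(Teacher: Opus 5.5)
Your proposal is correct and follows essentially the paper's route: everything reduces to $\mathfrak R\succeq0$ together with the trace bound $\operatorname{tr}\mathfrak R_{xx}\le\operatorname{tr}\mathfrak R=2\mathcal E_{ext}$. Your density $\mathbb D=2M_{xx}-\mathbb I_d$ is exactly the paper's $\mathbb A-\mathbb I_d$ with $\mathbb A=d\mathfrak R_{xx}/d\mathcal E_{ext}$; normalizing once by $\operatorname{tr}\mathfrak R$ is only tidier bookkeeping. Your rank-one examples $\mathfrak R=\mu\,e_z\otimes e_z$ and $\mathfrak R=\mu\,\tilde\xi\otimes\tilde\xi$ already settle attainment under the lemma's hypotheses, so the heuristic about oscillating sequences $G_n$ is not needed. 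One correction in (v): the pairing requires $\nabla_x\mathbf v$ itself (not merely $\mathbf v$) to be bounded and continuous on $\overline\Omega$. This holds for the $C^1$ fields used later, but not under the bare assumption $\nabla_x\mathbf v\in L^1(0,T;L^\infty(\mathbb T^d))$, since a Lebesgue equivalence class cannot be integrated against a possibly singular measure.
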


\begin{pf}
	Throughout, $\xi\in\mathbb R^d$ is a unit vector and $\tilde\xi := (\xi,0)\in\mathbb R^{d+1}$, so that
	\begin{equation}\label{Rx-block}
		\mathfrak R_{xx} : (\xi\otimes\xi) \;=\; \mathfrak R : (\tilde\xi\otimes\tilde\xi) \ \ge\ 0
	\end{equation}
	by positivity of $\mathfrak R$; in particular $\mathfrak R_{xx}\succeq0$.
	
	\begin{itemize}[leftmargin=*]
		\item[-] \emph{(i).} Choosing for $\xi$ the coordinate directions in \eqref{Rx-block} and summing gives $\operatorname{tr}\mathfrak R_{xx} \ge 0$. Moreover $\operatorname{tr}\mathfrak R = \operatorname{tr}\mathfrak R_{xx} + \mathfrak R_{zz}$ with $\mathfrak R_{zz} = \mathfrak R : (e_z\otimes e_z) \ge 0$, so $\operatorname{tr}\mathfrak R_{xx}\le\operatorname{tr}\mathfrak R = 2\mathcal E_{ext}$ by \eqref{Eext}.
		\item[-] \emph{(ii).}  At first we observe that
		\[
		\big(c_{d,s}\mathcal R_{ext}\big) : (\xi\otimes\xi) \;=\; \mathfrak R_{xx} : (\xi\otimes\xi) \;-\; \mathcal E_{ext} .
		\]
		The lower bound is \eqref{Rx-block}. For the upper bound, a positive semidefinite matrix satisfies $\mathfrak R_{xx}:(\xi\otimes\xi) \le \operatorname{tr}\mathfrak R_{xx}\le 2\mathcal E_{ext}$. 
		\item[-]\emph{(iii).} Take traces in \eqref{Rext-str} and use $\operatorname{tr}\mathfrak R_{xx} = 2\mathcal E_{ext} - \mathfrak R_{zz}$ from (i). Since $\mathfrak R_{zz}=\mathfrak R:(e_{z}\otimes e_{z})\ge0$ and
		$\mathcal E_{ext}\ge0$, both terms are nonpositive for $d\ge2$.
\item[-]\emph{(iv).} Let $E\subset\overline\Omega$ be Borel with $\mathcal E_{ext}(E)=0$, and for $\xi\in\R^d$
set $\mu_\xi:=(c_{d,s}\mathcal R_{ext}):(\xi\otimes\xi)$. By \eqref{Rext-str},
$\mu_\xi=\mathfrak R_{xx}:(\xi\otimes\xi)-\mathcal E_{ext}|\xi|^2$. Since $\mathfrak R_{xx}\succeq0$ and
$\operatorname{tr}\mathfrak R_{xx}\le2\mathcal E_{ext}$ by~(i), each $\mu_\xi$ is a difference of two
nonnegative measures absolutely continuous w.r.t.\ $\mathcal E_{ext}$, hence $\mu_\xi\ll\mathcal E_{ext}$;
in particular $\mu_\xi(E)=0$ for every $\xi$. Taking $\xi=e_i$ and $\xi=(e_i+e_j)/\sqrt2$ and using
the symmetry of $\mathcal R_{ext}$,
\[
(\mathcal R_{ext})_{ii}=\tfrac1{c_{d,s}}\mu_{e_i},\qquad
(\mathcal R_{ext})_{ij}=\tfrac1{c_{d,s}}\big[\mu_{(e_i+e_j)/\sqrt2}-\tfrac12\mu_{e_i}-\tfrac12\mu_{e_j}\big]
\quad(i\ne j),
\]
so $(\mathcal R_{ext})_{ij}(E)=0$ for every entry; thus $\mathcal R_{ext}\ll\mathcal E_{ext}$ entrywise.
Radon--Nikod\'ym then yields a symmetric density
$\mathbb D:=d(c_{d,s}\mathcal R_{ext})/d\mathcal E_{ext}$, a priori in
$L^1(\overline\Omega,\mathcal E_{ext};\mathbb R^{d\times d}_{\mathrm{sym}})$, with
$\mathbb D\,\xi\cdot\xi=d\mu_\xi/d\mathcal E_{ext}$ $\,\mathcal E_{ext}$-a.e.

To upgrade the bound, write $\mathbb A:=d\mathfrak R_{xx}/d\mathcal E_{ext}$; by \eqref{Rext-str},
$\mathbb D=\mathbb A-\mathbb I_d$ with $\mathbb A\succeq0$ and $\operatorname{tr}\mathbb A\le2$
$\,\mathcal E_{ext}$-a.e.\ (by~(i)). A positive semidefinite matrix with trace $\le2$ has all
eigenvalues in $[0,2]$, so $\mathbb A$ has spectrum in $[0,2]$ and $\mathbb D=\mathbb A-\mathbb I_d$
spectrum in $[-1,1]$; since $\mathbb D$ is symmetric,
$\|\mathbb D\|_{L^\infty(\overline\Omega,\,\mathcal E_{ext})}\le1$, and in particular
$\mathbb D\in L^\infty(\overline\Omega,\mathcal E_{ext};\mathbb R^{d\times d}_{\mathrm{sym}})$.
		\item[-] \emph{(v).} We observe that
		\[
		\Grad_x\mathbf v : \big(c_{d,s}\,\mathcal R_{ext}\big)
		\;=\; \Grad_x \mathbf v : \mathfrak R_{xx} \;-\; \mathcal E_{ext}\ \Div_x\mathbf v ,
		\]
		using $\Grad_x\mathbf v : \mathbb I_d = \Div_x\mathbf v$. The two terms are controlled by different mechanisms. For the first, diagonalise the positive semidefinite density of $\mathfrak R_{xx}$ as $\sum_{i} \lambda_i\, b_i\otimes b_i$ with $\lambda_i \ge 0$ and $\{b_i\}$ orthonormal; then
		\[
		\big| \Grad_x\mathbf v : \mathfrak R_{xx} \big|
		= \Big| \sum_i \lambda_i \, \big(\Grad_x\mathbf v\big) b_i \cdot b_i \Big|
		\ \le\ \|\Grad_x\mathbf v\|_{\mathrm{op}} \sum_i \lambda_i
		\ =\ \|\Grad_x\mathbf v\|_{\mathrm{op}}\ \operatorname{tr}\mathfrak R_{xx}
		\ \le\ 2\,\|\Grad_x\mathbf v\|_{\mathrm{op}}\ \mathcal E_{ext},
		\]
		by (i). Note this step uses only $\mathfrak R_{xx}\succeq0$ --- it is the classical trace bound, unavailable for $\mathcal R_{ext}$ itself. The second term is bounded by $\|\Div_x\mathbf v\|_{L^\infty_x}\,\mathcal E_{ext}$ directly. Integrating in time and adding gives \eqref{Rext-test}.
	\end{itemize}

\end{pf}

\subsubsection{Passage to the limit}
\label{sec:stability-conclusion}

We now let $n\to\infty$ in the approximate system. Each $(\vr_n,\mathbf m_n=\vr_n \vu_n)$ solves \eqref{eq:ERc}--\eqref{eq:ERm} up to the consistency errors of (A3), so for every $\pmb\psi \in C_c^\infty([0,T)\times\mathbb T^d;\mathbb R^d)$,
\begin{align*}
	\int_0^T\!\!\int_{\mathbb T^d} \left[ \mathbf m_n\cdot\partial_t\pmb\psi + \left( \frac{\mathbf m_n\otimes\mathbf m_n}{\vr_n} + p(\vr_n)\mathbb I_d \right) : \Grad_x\pmb\psi \right] dx\,dt
	+ \int_{\mathbb T^d} \mathbf m_{0,n}\cdot\pmb\psi(0,\cdot)\,dx
	\\	= \int_0^T\!\!\int_{\mathbb T^d} \vr_n\, \Grad_x\Lambda_0(U_n)\cdot\pmb\psi \,dx\,dt +\big\langle \mathbf{e}_n^{\,m}, \pmb\psi \big\rangle   .
\end{align*}

\medskip
We write $\vr_n = (\vr_n - \bar\vr) + \bar\vr$ and using \eqref{T} applied to $U_n$ it holds 
\begin{equation*}
	\int_{\mathbb T^d} (\vr_n - \bar\vr)\, \Grad_x\Lambda_0(U_n)\cdot\pmb\psi \,dx
	= c_{d,s} \iint_{\Omega} \mathbb T[U_n] : \Grad_x\pmb\psi \,dx\,dz ,
\end{equation*}
while the mean part is linear, and integrates by parts on $\mathbb T^d$ with no boundary contribution,
\begin{equation}\label{force-mean}
	\bar\vr \int_{\mathbb T^d} \Grad_x\Lambda_0(U_n)\cdot\pmb\psi \,dx
	= -\, \bar\vr \int_{\mathbb T^d} \Lambda_0(U_n)\, \Div_x\pmb\psi \,dx .
\end{equation}

\medskip
\noindent\emph{The limit.} The convective and pressure fluxes converge modulo $\mathcal R_{fluid}$ by \eqref{fluid-defect}; the bulk stress converges modulo $\mathcal R_{ext}$, since $\Ov{\mathbb T_{\rm ER}[U]} = \mathbb T_{\rm ER}[U] + \mathcal R_{ext}$ by \eqref{Rext}. \par 
 The initial data converge by (A1); and the term \eqref{force-mean} is \emph{linear}, so it passes to the limit with no defect at all. Hence invoking the \eqref{app:conv} and continuity of the trace $\Lambda_0 : \dot H^1(\Omega,z^{1-2s}) \to \dot H^s(\mathbb T^d)$ one has $\Lambda_0(U_n) \rightharpoonup \Lambda_0(U)$ in $\dot H^s(\mathbb T^d)$. Collecting the terms and moving the force to the left-hand side, and using $\vm\in C_{\mathrm{weak}}([0,T];L^{\frac{2\gamma}{\gamma+1}}(\mathbb T^d;\mathbb R^d))$, we get
 \begin{align*}
 	 		&\int_0^T\!\!\int_{\mathbb T^d} \Big[ \vm\cdot\partial_t\pmb\psi + \Big( \tfrac{\vm\otimes\vm}{\vr} + p(\vr)\mathbb I_d \Big) : \Grad_x\pmb\psi \Big]\,dx\,dt \\
 	&\quad -\, c_{d,s} \int_0^T\!\!\iint_{\Omega} \mathbb T_{\rm ER}[U] : \Grad_x\pmb\psi \,dx\,dz\,dt
 	\;+\; \bar\vr \int_0^T\!\!\int_{\mathbb T^d} \Lambda_0(U)\, \Div_x\pmb\psi \,dx\,dt \\
 	&\quad +\, \int_0^T\!\!\int_{\mathbb T^d} \Grad_x\pmb\psi : d\mathcal R_{fluid}
 	\;-\; c_{d,s} \int_0^T\!\!\iint_{\overline\Omega} \Grad_x\pmb\psi : d\mathcal R_{ext}
 	= -\int_{\mathbb T^d} \vm_0\cdot\pmb\psi(0,\cdot)\,dx ,
 \end{align*}
 for every $\pmb\psi\in C_c^\infty([0,T)\times\mathbb T^d;\mathbb R^d)$. Since $\vm\in C_{\mathrm{weak}}([0,T];L^{2\gamma/(\gamma+1)})$,
 this identity is equivalent to the form \eqref{eq:w-ERm} carrying the terminal trace $\vm(\tau)$ at a.e.\ $\tau\in(0,T)$.

\medskip
\noindent\emph{The remaining conditions.} The continuity equation \eqref{eq:w-ERc} is obtained as it is linear in $(\vr_n,\mathbf m_n)$ and we pass the limit by directly using \eqref{app:conv}. Using a similar argument, the extension identity \eqref{eq:w-ext} is obtained, as it is linear in $U_n$ and in $\vr_n$. The energy inequality \eqref{eq:ei} follows from by weak lower semicontinuity, the excess kinetic, internal, and bulk energies being recorded exactly by $\mathcal E_{fluid}$ of \eqref{fluid-energy-defect} and $\mathcal E_{ext}$ of \eqref{Eext}. Finally, as a consequence of Subsection \ref{sec:stability-defect} and Lemma \ref{lem:Rext-structure} the compatibility conditions \eqref{dm-compf} and \eqref{dm-compE} holds.  

We conclude this subsection stating the result, that follows from the above discussion: 
\begin{prop}[Sequential stability]\label{prop:sequential-stability}
	Let $(\vr_n,\mathbf m_n)$ be a consistent approximating sequence in the sense of Definition~\ref{def:consistent}, and let $U_n$ be the extension of $\vr_n - \bar\vr$. Then, up to a subsequence, the limits \eqref{app:conv} exist, the defect measures \eqref{fluid-defect}, \eqref{fluid-energy-defect}, \eqref{Rext}, \eqref{Eext} are well defined, and
	\[
	\big(\vr,\ \mathbf m,\ U,\ \mathcal R_{fluid},\ \mathcal R_{ext},\ \mathcal E_{fluid},\ \mathcal E_{ext}\big)
	\]
	is a dissipative weak solution in the sense of Definition~\ref{def:dissipative_weak_sol}.
\end{prop}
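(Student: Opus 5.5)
The proof of Proposition~\ref{prop:sequential-stability} collects the steps of Subsections~\ref{sec:stability-bounds}--\ref{sec:stability-conclusion}. I would verify each item of Definition~\ref{def:dissipative_weak_sol} in turn.

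\emph{Compactness and regularity class.} From (A1)--(A2) and the isometry \eqref{CS-prop1}, I obtain the uniform bound \eqref{app:ba1-ext}. Banach--Alaoglu then gives the limits \eqref{app:conv}, and also weak-$*$ limits of $\vm_n\otimes\vm_n/\vr_n$, $p(\vr_n)$, $P(\vr_n)$, $|\vm_n|^2/\vr_n$ and $G_n\otimes G_n$ in $L^\infty_{\mathrm{weak}\text{-}*}(0,T;\mathcal M)$. The integrands are nonnegative, or are dominated by nonnegative ones with $L^\infty_tL^1$ bounds. For $\vr$ and $\vm$ to be weakly continuous in time, I test the approximate equations with fixed smooth $\phi,\pmb\psi$. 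The fluxes are bounded in $L^\infty_tL^1_x$. The force term is bounded by $\|\pmb\psi\|_{C^1}$ times the extension energy, via \eqref{w-T} and \eqref{force-mean}. Hence $t\mapsto\int\vr_n\phi$ and $t\mapsto\int\vm_n\cdot\pmb\psi$ are equi-Lipschitz, up to the error terms, which tend to zero. Arzel\`a--Ascoli, a diagonal argument over a countable dense set of test functions, and the uniform $L^\gamma$ and $L^{2\gamma/(\gamma+1)}$ bounds then give \eqref{regclass}. The regularity $U\in L^\infty(0,T;\dot H^1(\Omega,z^{1-2s}))$ follows from lower semicontinuity of the norm.

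\emph{Equations.} The continuity equation \eqref{eq:w-ERc} and the extension identity \eqref{eq:w-ext} are linear in $(\vr_n,\vm_n,U_n)$ and pass to the limit directly. In \eqref{eq:w-ext} I use that $\Lambda_0\Psi$ is smooth for $\Psi\in\dot C_c^\infty(\overline\Omega)$. For the momentum equation, I rewrite the force via \eqref{w-T} and \eqref{force-mean}. I then replace each weak limit $\Ov{\cdot}$ by its pointwise limit plus a defect, using \eqref{fluid-defect} and \eqref{Rext}; this is legitimate because $\mathcal L$ is linear. The trace term converges by continuity of $\Lambda_0$. Finally, I pass from test functions in $C_c^\infty([0,T)\times\T^d)$ to the form with terminal time $\tau$ by the usual cutoff $\psi\chi_\epsilon(t)$, using the weak continuity established above.

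\emph{Energy and compatibility.} The approximate energy inequality \eqref{app_ie} holds for every $\tau$, with $e_n^e(\tau)\to0$. I integrate it against a nonnegative function of time, pass to the limit, and split each weak limit into its pointwise value plus a defect; this gives \eqref{eq:ei} at Lebesgue points. The main obstacle is identifying the limit energy exactly. The weak-$*$ limit of $\tfrac12|\vm_n|^2/\vr_n+P(\vr_n)$ must equal $\tfrac12|\vm|^2/\vr+P(\vr)+\mathcal E_{fluid}$, with the defect nonnegative. This needs the joint convexity and lower semicontinuity of $(\vr,\vm)\mapsto|\vm|^2/\vr$ on nonnegative test functions, as in \eqref{defect1}. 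The analogous split for $\tfrac{c_{d,s}}2|G_n|^2$ gives $\mathcal E_{ext}=\tfrac12\operatorname{tr}\mathfrak R$. On the right-hand side, (A1) yields $\limsup_n\mathcal E[\vr_{0,n},\vu_{0,n}]\le\mathcal E_0$. Compatibility (a) then follows from the trace computation after \eqref{fluid-energy-defect}, and compatibility (b) is Lemma~\ref{lem:Rext-structure}(iv).
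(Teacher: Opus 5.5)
Your outline is the paper's own argument, step for step: uniform bounds from (A1)--(A2) and \eqref{CS-prop1}, Banach--Alaoglu, defects defined as gaps between weak limits and values at the weak limits, the stress identity \eqref{w-T} for the force, and the trace computation with Lemma~\ref{lem:Rext-structure}(iv) for compatibility. The step you single out as the main obstacle is actually definitional. The identity $\overline{\tfrac12|\vm|^2/\vr+P(\vr)}=\tfrac12|\vm|^2/\vr+P(\vr)+\mathcal E_{fluid}$ is just \eqref{fluid-energy-defect}, and only the sign of $\mathcal E_{fluid}$ needs convexity.

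The delicate point lies elsewhere, and neither your proposal nor the paper's sketch addresses it. Since $\overline\Omega=\T^d\times[0,\infty)$ is not compact, Banach--Alaoglu gives $\overline{G\otimes G}$, $\mathfrak R$ and $\mathcal R_{ext}$ only as weak-$*$ limits tested against $L^1(0,T;C_0(\overline\Omega))$. In the momentum equation, however, the test field $\Gradx\pmb\psi$ is constant in $z$, so it is not admissible. A priori part of the stress could escape to $z=\infty$. In that case the replacement
\[
\lim_{n\to\infty}\int_0^T\!\!\iint_\Omega\mathbb T_{\rm ER}[U_n]:\Gradx\pmb\psi
=\int_0^T\!\!\iint_\Omega\mathbb T_{\rm ER}[U]:\Gradx\pmb\psi+\int_0^T\!\!\iint_{\overline\Omega}\Gradx\pmb\psi:d\mathcal R_{ext}
\]
that you invoke fails, and \eqref{eq:w-ERm} misses a term that no measure on $\overline\Omega$ represents. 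The energy inequality is unaffected, since lower semicontinuity of total mass goes the right way, and the compatibility conditions are local.

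The missing ingredient is tightness in $z$, uniformly in $n$ and $t$. On the torus it comes from the spectral gap $|k|\ge1$. Let $\hat\rho_k$ be the Fourier coefficients of $\vr_n(t)-\bar\vr$. By \eqref{eq:torus-U}, Parseval in $x$ and the rescaling $r=|k|z$, each mode contributes $|k|^{-2s}|\hat\rho_k|^2\int_{|k|L}^\infty r^{1-2s}(\psi^2+\psi'^2)\,dr$ to the energy above height $L$, up to the factor $(2\pi)^d$. Hence
\[
\int_{\T^d}\!\int_L^\infty z^{1-2s}|\Gradxz U_n(t)|^2\,dz\,dx\ \le\ (2\pi)^d\Big(\int_L^\infty r^{1-2s}\big(\psi^2+\psi'^2\big)\,dr\Big)\,\|\vr_n(t)-\bar\vr\|_{\dot H^{-s}(\T^d)}^2 .
\]
The bracket tends to $0$ by the exponential decay in \eqref{eq:bessel-asymp}, and the norm is bounded by the energy. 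Cutting $\Gradx\pmb\psi$ off at height $L$, you pass to the limit on the compactly supported part and control the remainder uniformly in $n$. The same bound shows the defects carry no mass at infinity, so they really lie in $\mathcal M(\overline\Omega)$, as Definition~\ref{def:dissipative_weak_sol} demands. This genuinely uses the torus: on $\R^d$ low frequencies spread to large $z$.

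A smaller point concerns your equi-Lipschitz claim. (A3) only says that $\langle e_n^c,\phi\rangle$ and $\langle\mathbf e_n^m,\pmb\psi\rangle$ tend to zero for each fixed smooth space--time test function. This gives no uniform-in-$n$ control of $\int_{t_1}^{t_2}\!\int e_n^c\phi$, so the claim is not justified. Obtain $C_{\rm weak}$ from the limit equations instead. Once \eqref{eq:w-ERc} and the momentum identity hold for test functions in $C_c^\infty([0,T)\times\T^d)$, the time derivatives of $t\mapsto\int\vr\phi$ and $t\mapsto\int\vm\cdot\pmb\psi$ lie in $L^\infty(0,T)$, because every flux and defect is bounded in time. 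The uniform $L^\gamma$ and $L^{2\gamma/(\gamma+1)}$ bounds then give weakly continuous representatives with the correct initial values.
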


In particular, a dissipative weak solution exists whenever a consistent, energy-stable approximation does --- which is precisely what justifies the definition.


\subsection{Compatibility of dissipative solutions}
\label{sec:compatibility}

We now show that the class of dissipative weak solutions introduced in Definition~\ref{def:dissipative_weak_sol} is \emph{compatible} with the classical theory: a dissipative weak solution which happens to be smooth is a classical solution of the Euler--Riesz system, and its defect measures vanish identically. This is the minimal consistency requirement on any notion of generalized solution .

The proof proceeds in two steps, corresponding to the two layers of the formulation:
\begin{enumerate}[leftmargin=*]
	\item[(1)] \emph{The defects vanish.} A Gr\"onwall argument, driven by the energy inequality \eqref{eq:ei} and the compatibility conditions \eqref{dm-compf}--\eqref{dm-compE} give $\mathcal E_{fluid}=\mathcal E_{ext}=0$ and hence $\mathcal R_{fluid}=\mathcal R_{ext}=0$.
	\item[(2)] \emph{The extended system reduces to the classical one.} The extension identity \eqref{eq:w-ext} then gives $\Lambda_0 U=\Phi$ with $(-\Delta)^s\Phi=\vr-\bar\vr$.
\end{enumerate}

\begin{thm}[Compatibility]\label{thm:compatibility}
	Let $d=2,3$, $\gamma>1$, $s\in(0,1)$ (equivalently $\beta\in (0,2)$), and let
	\[
	\big(\vr,\ \mathbf m,\ U,\ \mathcal R_{fluid},\ \mathcal R_{ext},\ \mathcal E_{fluid},\ \mathcal E_{ext}\big)
	\]
	be a dissipative weak solution on $[0,T]$ in the sense of Definition~\ref{def:dissipative_weak_sol}, with initial data $(\vr_0,\mathbf m_0) \in C^1(\T^d)$. Further we assume that $
	\underline \vr:=\min_{[0,T]\times\mathbb T^d} \vr>0 $ and $\vr \in C^1\big([0,T]\times\mathbb T^d\big)$ and $ \vv \in C^1\big([0,T]\times\mathbb T^d;\R^d\big)$.
	Then
	\begin{equation}\label{defect0}
		\mathcal E_{fluid} = \mathcal E_{ext} = 0
		\qquad\text{and}\qquad
		\mathcal R_{fluid} = \mathcal R_{ext} = 0
		\qquad\text{on } [0,T],
	\end{equation}
	the total energy is conserved,
	\begin{equation}\label{comp-eid}
		\mathcal E(\tau) = \mathcal E(0) \qquad\text{for all } \tau\in[0,T],
	\end{equation}
	and $(\vr,\vu=\frac{m}{\vr})$ is a classical solution of the Euler--Riesz system \eqref{eq:ERc}--\eqref{eq:ERr}, with $\Phi = \Lambda_0 U$.
\end{thm}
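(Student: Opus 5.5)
The strategy is to compare the smooth dissipative solution with itself through a relative energy argument. Since $(\vr,\vu=\vm/\vr)$ is $C^1$ with $\vr\ge\underline\vr>0$, we may use the pair $(r,\vv,V):=(\vr,\vu,U)$ itself as test functions in \eqref{eq:w-ERc}, \eqref{eq:w-ERm}, \eqref{eq:w-ext}. Here $U=\mathfrak E[\vr-\bar\vr]$ by uniqueness in Proposition~\ref{prop:CS_weak}. The point is that all nonlinear terms are then evaluated at the smooth fields, and only the defects remain unbalanced.

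\textbf{Step 1: energy balance.} Test the momentum equation \eqref{eq:w-ERm} with $\pmb\psi=\vu$ and the continuity equation \eqref{eq:w-ERc} with $\phi=\tfrac12|\vu|^2$ and with $\phi=P'(\vr)$. This is admissible after a standard mollification in $t$, since everything is $C^1$.

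\emph{Fluid terms.} Combining the tests in the standard way, the smooth convective and pressure parts reproduce $\frac{d}{dt}\int(\tfrac12\vr|\vu|^2+P(\vr))$. This is the usual energy identity for $C^1$ solutions, where $\vr\,\partial_t\vu+\vm\cdot\Grad\vu\,\vu$ combines with $\vm\otimes\vm/\vr:\Grad\vu$.

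\emph{Force terms.} By \eqref{w-T}, which holds for the smooth $U$ because \eqref{eq:w-ext} is satisfied, the force term $-c_{d,s}\iint\mathbb T_{\rm ER}[U]:\Grad_x\vu+\bar\vr\int\Lambda_0U\,\Div_x\vu$ equals $-\int\vr\,\Grad_x\Lambda_0U\cdot\vu$. Using the continuity equation and \eqref{eq:w-ext} tested with $\partial_tU$ (again after time mollification), this equals $-\frac{d}{dt}\frac{c_{d,s}}2\iint z^{1-2s}|\Grad_{x,z}U|^2$. The regularity of $U$ needed here is inherited from $\vr\in C^1$ via the Fourier representation; this is Proposition~\ref{prop:CS_regularity}, cited in the paper as a later result.

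The outcome is the exact identity
\[
\mathcal E(\tau)-\mathcal E(0)=-\int_0^\tau\!\!\int_{\T^d}\Grad_x\vu:d\mathcal R_{fluid}\,dt+c_{d,s}\int_0^\tau\!\!\iint_{\overline\Omega}\Grad_x\vu:d\mathcal R_{ext}\,dt .
\]

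\textbf{Step 2: Gr\"onwall.} Insert this identity into the energy inequality \eqref{eq:ei} to obtain
\[
\int d\mathcal E_{fluid}(\tau)+\iint d\mathcal E_{ext}(\tau)\le \int_0^\tau\Big|\int\Grad_x\vu:d\mathcal R_{fluid}\Big|+\Big|c_{d,s}\iint\Grad_x\vu:d\mathcal R_{ext}\Big|\,dt .
\]
The two terms on the right are controlled as follows.

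\emph{Fluid defect.} Since $\mathcal R_{fluid}\succeq0$, the trace bound gives $|\Grad\vu:\mathcal R_{fluid}|\le\|\Grad\vu\|_{L^\infty}\operatorname{tr}\mathcal R_{fluid}\le\lambda_1^{-1}\|\Grad\vu\|_{L^\infty}\mathcal E_{fluid}$ by \eqref{dm-compf}.

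\emph{Extension defect.} Using the density bound \eqref{dm-compE}, $|\Grad_x\vu:\mathbb D|\le d\,\|\Grad\vu\|_{\mathrm{op}}$. Equivalently, one can apply Lemma~\ref{lem:Rext-structure}(v), except that here only \eqref{dm-compE} is assumed rather than the structure \eqref{Rext-str}, so the crude bound through $\|\mathbb D\|_{\mathrm{op}}\le1$ and a factor $d$ is the one to use.

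Gr\"onwall's lemma with vanishing initial defect then gives $\mathcal E_{fluid}=\mathcal E_{ext}=0$ for a.e.\ $\tau$. From there, \eqref{dm-compf} gives $\mathcal R_{fluid}=0$, and absolute continuity \eqref{dm-compE} gives $\mathcal R_{ext}=0$. The identity of Step 1 then yields \eqref{comp-eid}.

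\textbf{Step 3: classical solution.} With the defects gone, \eqref{eq:w-ERm} and \eqref{w-T} read back as the distributional form of \eqref{eq:ERm} with force $\vr\Grad_x\Lambda_0U$. Proposition~\ref{prop:CS_weak} identifies $\Lambda_0U=\Phi$ with $(-\Delta)^s\Phi=\vr-\bar\vr$. Since all fields are $C^1$, the equations hold pointwise.

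\textbf{Main obstacle.} The delicate step is Step 1's force computation, namely justifying $\int\vr\,\Grad\Lambda_0U\cdot\vu=\frac{d}{dt}\frac{c_{d,s}}2[U]^2$ rigorously. This requires $\partial_tU\in\dot H^1(\Omega,z^{1-2s})$ to be an admissible test in \eqref{eq:w-ext}. I would handle it by Fourier series on $\T^d$: $\partial_t\vr=-\Div(\vr\vu)\in C^0$ is mean-zero and lies in $\dot H^{-s}$, so $\partial_tU=\mathfrak E[\partial_t\vr]$. The identity then reduces to $\frac{d}{dt}\tfrac12\|\vr-\bar\vr\|_{\dot H^{-s}}^2=\langle\partial_t\vr,\Phi\rangle=\int\vr\vu\cdot\Grad\Phi$, which is elementary.
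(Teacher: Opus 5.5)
Your proposal is correct and follows essentially the same route as the paper. You derive the exact energy balance in which only the defect pairings $\Grad_x\vu:\mathcal R_{fluid}$ and $\Grad_x\vu:\mathcal R_{ext}$ survive, insert it into the energy inequality, bound these pairings by $\mathcal E_{fluid}$ and $\mathcal E_{ext}$ via \eqref{dm-compf} and the density bound \eqref{dm-compE}, and close with Gr\"onwall. The differences are cosmetic: your constant $d$ versus the paper's $\sqrt d$ in the extension-defect bound, and your explicit Fourier justification of the time derivative of the extension energy, which the paper simply calls a standard computation.
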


\begin{pf}
	\emph{Step 1: the defect measures vanish.}
	
	Since $(\vr,\vm,U)$ is smooth, using integrations by parts in the momentum balance \eqref{eq:w-ERm} and the identity \eqref{w-T} for the stress term, in the sense of distributions it holds that 
	\begin{equation}\label{compt-mom}
		\partial_t \mathbf m + \Div_x\!\left( \frac{\mathbf m\otimes\mathbf m}{\vr} \right) + \Grad_x p(\vr) + \vr\,\Grad_x \Lambda_0 U
		\;=\; \Div_x \mathcal R_{\mathrm{tot}},
		\qquad
		\mathcal R_{\mathrm{tot}} := \mathcal R_{fluid} \;-\; c_{d,s}\!\int_0^\infty \mathcal R_{ext}\,dz.
	\end{equation}
 Therefore a smooth dissipative solution solves the system forced by the divergence of the defects. In particular its energy need not be conserved, and this is the circularity the following argument must break. 
	Note that from extension PDE \eqref{eq:w-ext}, we already have  
	\begin{align*}
			\iint_{\Omega} z^{1-2s}\, \Grad_{x,z} U \cdot \Grad_{x,z} \Psi \,dx\,dz = c_{d,s}^{-1} \int_{\mathbb{T}^d} (\vr - \bar{\vr})\, \Lambda_0(\Psi) \,dx.
	\end{align*}
	Testing \eqref{compt-mom} with $\vu:=\tfrac{\vm}{\vr}$, using the continuity equation \eqref{eq:w-ERc} and along the extension PDE \eqref{eq:w-ext} with $ \Psi=U$, a standard computation gives the following energy balance.
	\begin{align}\label{smooth-EB}
		\begin{split}
		&	\frac{d}{dt}\,\mathcal E(t)
		\;=\; -\int_{\mathbb T^d} \Grad_x\vu : d\mathcal R_{fluid}
		\;+\; c_{d,s}\iint_{\overline\Omega} \Grad_x\vu : d\mathcal R_{ext},\\
		\qquad
	& \text{with }	\mathcal E(t) = \int_{\mathbb T^d}\!\Big(\tfrac12\vr|\vu|^2 + P(\vr)\Big)dx + \tfrac{c_{d,s}}{2}\iint_\Omega z^{1-2s}|\Grad_{x,z}U|^2 \dx\;  \text{d}z.
	\end{split}
	\end{align}

	Introduce the total defect mass
	\begin{equation}\label{D-def}
		D(\tau) := \int_{\mathbb T^d} d\mathcal E_{fluid}(\tau) \;+\; \iint_{\overline\Omega} d\mathcal E_{ext}(\tau) \ \ge\ 0 ,
	\end{equation}
	which is non-negative because $\mathcal E_{fluid}, \mathcal E_{ext} \in \mathcal M^+$. The energy inequality \eqref{eq:ei}, combined with \eqref{smooth-EB} integrated in time, yields for a.e.\ $\tau\in(0,T)$
	\begin{equation}\label{D-bound}
		D(\tau)
		\ \le\ \mathcal E(0) - \mathcal E(\tau)
		\ =\ \int_0^\tau\!\!\int_{\mathbb T^d} \Grad_x\vu : d\mathcal R_{fluid}\,dt
		\;-\; c_{d,s}\int_0^\tau\!\!\iint_{\overline\Omega} \Grad_x\vu : d\mathcal R_{ext}\,dt .
	\end{equation}
	It remains to bound the right-hand side by $\displaystyle \int_0^\tau D(\tau) \dt $. Here, the compatibility conditions for defect measures (\eqref{dm-compf} and \eqref{dm-compE}) play an important role
	
	For the fluid term, $\mathcal R_{fluid}\succeq0$, so the classical trace bound applies, and \eqref{dm-compf} gives
	\begin{equation}\label{compt-f1}
		\left| \int_{\mathbb T^d} \Grad_x\vu : d\mathcal R_{fluid} \right|
		\ \le\ \|\Grad_x\vu\|_{L^\infty_x}\, \operatorname{tr}\mathcal R_{fluid}\big(\mathbb T^d\big)
		\ \le\ \frac{\|\Grad_x\vu\|_{L^\infty_x}}{\lambda_1}\, \int_{\mathbb T^d} d\mathcal E_{fluid} .
	\end{equation}
	For the extension term, we use \eqref{dm-compE}, more explicitely Lemma \ref{lem:Rext-structure} to get
	\begin{equation}\label{compt-f2}
		\left| c_{d,s}\iint_{\overline\Omega} \Grad_x\vu : d\mathcal R_{ext} \right|
		= \left| \iint_{\overline\Omega} \Grad_x\vu : \mathbb D \ d\mathcal E_{ext} \right|
		\ \le\ \sqrt d\,\|\Grad_x\vu\|_{L^\infty} \iint_{\overline\Omega} d\mathcal E_{ext},
	\end{equation}
	using $|A : \mathbb D| \le |A|_F\,|\mathbb D|_F \le \sqrt d\,|A|_F\,\|\mathbb D\|_{\mathrm{op}}$ for $\mathbb D \in \mathbb R^{d\times d}_{\mathrm{sym}}$. Substituting \eqref{compt-f1}--\eqref{compt-f2} into \eqref{D-bound},
	\begin{equation}\label{compt-f}
		D(\tau) \ \le\ C_{\vu} \int_0^\tau D(t)\,dt,
		\qquad
		C_{\vu} := C(d,s,\lambda_1) \big\| \Grad_x\vu \big\|_{L^\infty((0,T)\times\mathbb T^d)} \ <\ \infty,
	\end{equation}
	the constant being finite precisely by the hypothesis $\Grad_x\vu\in L^\infty((0,T)\times \T^d)$. Since $D\ge0$ and the inequality \eqref{compt-f} has no source term, Gr\"onwall's lemma gives $D\equiv 0$ on $[0,T]$, i.e. $	\mathcal E_{fluid} = 0, \qquad \mathcal E_{ext} = 0 $. As a simple consequence of Lemma \ref{lem:Rext-structure}, we have  $ 	 \mathcal R_{fluid} = 0 \text{ and } \mathcal R_{ext} = 0$.

	\medskip
	\emph{Step 2: the extended system reduces to the classical one.} With $\mathcal R_{fluid} = \mathcal R_{ext} = 0$, the forced equation \eqref{compt-mom} becomes
	\[
	\partial_t \mathbf m + \Div_x\!\left( \frac{\mathbf m\otimes\mathbf m}{\vr} \right) + \Grad_x p(\vr) + \vr\,\Grad_x\Lambda_0 U = 0
	\]
	in the sense of distributions; since all fields are of class $C^1$ and $\displaystyle \vr\ge\underline\vr>0$, the equation holds pointwise, and $\mathbf m = \vr\vu$ gives the classical momentum equation \eqref{eq:ERm}. Likewise \eqref{eq:w-ERc} is the classical continuity equation \eqref{eq:ERc}.
	
	It remains to eliminate the extension. Since, $U(t)$ is the unique weak solution of \eqref{CS_weak} with datum $\vr(t)-\bar\vr$, so by Proposition~\ref{prop:CS_weak} its trace satisfies
	\[
	\Lambda_0 U = \Phi, \qquad (-\Delta)^s \Phi = \vr - \bar\vr \quad\text{on } \mathbb T^d ,
	\]
	which is \eqref{eq:ERr}. Substituting $\Grad_x\Lambda_0 U = \Grad_x\Phi$ into the momentum equation recovers \eqref{eq:ERm}. Hence $(\vr,\vu,\Phi)$ is a classical solution of \eqref{eq:ERc}--\eqref{eq:ERr}.
\end{pf}

\begin{rmk}[Relation to weak--strong uniqueness]\label{rem:compat-vs-ws}
	Theorem~\ref{thm:compatibility} is the diagonal case of the weak--strong uniqueness principle: it compares a dissipative solution with \emph{itself}, under the additional hypothesis that it is smooth. The general statement replaces $\vu$ in \eqref{smooth-EB} by an independent strong solution $\mathbf v$ and $\mathcal E$ by the corresponding relative energy; the structure of the argument --- energy inequality, then \eqref{compt-f1}--\eqref{compt-f2}, then Gr\"onwall --- is unchanged, and in particular the extension defect is handled by exactly the same estimate \eqref{compt-f2}. 
\end{rmk}

	\section{Relative Energy and Weak--Strong Uniqueness}
	\label{sec:relative}
	
\subsection{Relative energy: formal derivation}
	\subsubsection*{Property of tensor $\T_{\rm ER}[\;\cdot\;]$}
In relative energy, the goal is to compute $\T_{\rm ER}[U-V]$ and its relation with $\T[U]$ and $T[V]$, for suitable $U,V$.
Keeping in mind the definition of $\T_{\rm ER}[\;\cdot\;]$ from \eqref{T}, we define
\begin{equation}\label{TUV-def}
	\T_{\rm ER}(W,\widetilde{W}) := z^{1-2s}\Big(
	\tfrac12\big(\Gradx W\otimes\Gradx \widetilde{W} + \Gradx \widetilde{W}\otimes\Gradx W\big)
	- \tfrac12\big(\Gradxz W\cdot\Gradxz \widetilde{W}\big)\,\mathbb I_d \Big),
\end{equation}
so that $\T_{\rm ER}(W,W)=\T_{\rm ER}[W]$ and the following identity holds
\begin{equation}\label{Tpolar}
	\T_{\rm ER}[W-\widetilde{W}] = \T_{\rm ER}[W] - 2\,\T_{\rm ER}(W,\widetilde{W}) + \T_{\rm ER}[\widetilde{W}] .
\end{equation}
Next, we state a key identity, that we will use extensively in next subsection. 
\begin{lemma}[Extension identity]\label{lem:stress-id}
	Let $W, \widetilde{W}$ be the extensions (following \eqref{CS_weak}) of mean-zero $f, \widetilde{f} \in \dot H^{-s}(\mathbb T^d)$. If $\Gradx\Gradxz W$ and $\Gradx\Gradxz \widetilde{W}$ belong to $L^2(\Omega, z^{1-2s})$, then
	\begin{equation}\label{stress-id}
		\int_{\mathbb T^d} f\,\pmb\psi\cdot\Gradx\Lambdaz(\widetilde{W})\,dx
		\;+\; \int_{\mathbb T^d} \widetilde{f}\,\pmb\psi\cdot\Gradx\Lambdaz(W)\,dx
		\;=\; 2\,c_{d,s}\iint_\Omega \T_{\rm ER}(W,\widetilde{W}) : \Gradx\pmb\psi \,dx\,dz,
	\end{equation}
	where $\pmb\psi\in C^1(\mathbb T^d;\mathbb R^d)$.
	In particular, taking $W=\widetilde{W}$,
	\begin{equation}\label{stress-id2}
		\int_{\mathbb T^d} f\,\pmb\psi\cdot\Gradx\Lambdaz(W)\,dx
		\;=\; c_{d,s}\iint_\Omega \T_{\rm ER}[W] : \Gradx\pmb\psi \,dx\,dz .
	\end{equation}
\end{lemma}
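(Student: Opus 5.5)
The plan is to reduce to the diagonal identity \eqref{stress-id2} and then polarize. Since \eqref{CS_weak} is linear in the datum, $W\pm\widetilde W$ are the extensions of $f\pm\widetilde f$. Applying \eqref{stress-id2} to $W+\widetilde W$ and to $W-\widetilde W$ and subtracting, the left-hand sides give $2\int f\pmb\psi\cdot\Gradx\Lambdaz\widetilde W+2\int\widetilde f\pmb\psi\cdot\Gradx\Lambdaz W$. On the right, the bilinearity in \eqref{Tpolar} gives $\T_{\rm ER}[W+\widetilde W]-\T_{\rm ER}[W-\widetilde W]=4\T_{\rm ER}(W,\widetilde W)$. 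Dividing by two yields \eqref{stress-id}. Hence everything rests on proving \eqref{stress-id2} for a single extension $W$ with $\Gradx\Gradxz W\in L^2(\Omega,z^{1-2s})$.

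For \eqref{stress-id2} I would give a weak version of the computation of Section~\ref{sec:stress}, avoiding pointwise traces of $z^{1-2s}\partial_zW$. The idea is to test \eqref{CS_weak} with $\Psi:=\pmb\psi\cdot\Gradx W$, which is $z$-independent in $\pmb\psi$. This is an admissible test function, since $\Gradxz\Psi=(\Gradxz\Gradx W)\pmb\psi+(\Gradx\pmb\psi)^T\Gradx W\cdot(\text{horizontal})$ lies in $L^2(\Omega,z^{1-2s})$ by the hypothesis and the energy bound. Its trace is $\Lambdaz\Psi=\pmb\psi\cdot\Gradx\Lambdaz W$, because horizontal derivatives commute with $\Lambdaz$ (this can be checked on the Fourier side, where $\mathfrak E$ acts mode by mode, or by density).

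The right-hand side of \eqref{CS_weak} is then exactly $c_{d,s}^{-1}\int f\,\pmb\psi\cdot\Gradx\Lambdaz W$. The left-hand side splits as
\[
\iint z^{1-2s}\partial_iW\,\partial_jW\,\partial_i\psi_j+\iint z^{1-2s}\psi_j\,\partial_iW\,\partial_i\partial_jW ,
\]
with $i$ running over all $d+1$ variables and $j\le d$. The second integral equals $\tfrac12\iint z^{1-2s}\pmb\psi\cdot\Gradx|\Gradxz W|^2$. Since the weight does not depend on $x$ and $\T^d$ has no boundary, integrating by parts in $x$ only turns it into $-\tfrac12\iint z^{1-2s}|\Gradxz W|^2\Divx\pmb\psi$. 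The first integral only involves $i\le d$, because $\pmb\psi$ does not depend on $z$, so $\partial_z\psi_j=0$. Adding the two gives $\iint\T_{\rm ER}[W]:\Gradx\pmb\psi$, and multiplying by $c_{d,s}$ yields \eqref{stress-id2}. The normal-derivative terms that are ill defined in \eqref{w-Tfull} never appear, because no integration by parts in $z$ is performed.

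The main obstacle is justification. The horizontal integration by parts uses $z^{1-2s}|\Gradxz W|\,|\Gradx\Gradxz W|\in L^1$, which holds by Cauchy--Schwarz from the hypothesis, and it can be done on each slice $\{z\}\times\T^d$ for a.e.\ $z$. The test function $\Psi$ lies in $\dot H^1(\Omega,z^{1-2s})$ rather than $\dot C_c^\infty$, so I would approximate it by horizontal mollifications $\Psi_\varepsilon=\pmb\psi\cdot\Gradx(W*_x\eta_\varepsilon)$, combined with a cutoff in $z$ at infinity. I would then pass to the limit using the continuity of $\Lambdaz:\dot H^1(\Omega,z^{1-2s})\to\dot H^s$, together with $\Gradx\Lambdaz W\in\dot H^s$ and $f\in\dot H^{-s}$ for the pairing. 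The cutoff at infinity is harmless because of the energy decay of $W$.
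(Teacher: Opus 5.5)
Your argument is correct and is essentially the paper's: test the weak extension equation \eqref{CS_weak} with a field of the form $\pmb\psi\cdot\Gradx W$ and integrate by parts only in $x$, so that no normal-derivative traces ever appear. The only difference is the order. The paper tests with the cross field $\pmb\psi\cdot\Gradx\widetilde W$, symmetrizes in $(W,\widetilde W)$ to obtain \eqref{stress-id}, and then specializes to \eqref{stress-id2}; you prove \eqref{stress-id2} first and recover \eqref{stress-id} by polarization through the linearity of $\mathfrak E$, which is equivalent, and your remarks on the admissibility of $\Psi$ and on slice-wise integration by parts supply justification the paper leaves implicit.
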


\begin{pf}
	Consider the weak formulation of \eqref{CS-ext} for $W,f$,
	\[ \int_{\mathbb T^d} f\,\Lambdaz(\Psi)\,\dx
	= c_{d,s}\iint_\Omega z^{1-2s}\,\Gradxz W\cdot\Gradxz \Psi \,\dx\,\dz .\]
	Choose the test function $\Psi := \pmb\psi\cdot\Gradx\widetilde W = \psi_k\,\partial_{x_k}\widetilde W$,
	with $\pmb\psi\in C^1(\mathbb T^d;\mathbb R^d)$. Since $\pmb\psi$ is $z$-independent and the trace
	commutes with $\partial_{x_k}$, its trace is $\Lambdaz(\Psi)=\pmb\psi\cdot\Gradx\Lambdaz(\widetilde W)$,
	and the weak form gives
	\[
	\int_{\mathbb T^d} f\,\pmb\psi\cdot\Gradx\Lambdaz(\widetilde W)\,\dx
	= c_{d,s}\iint_\Omega z^{1-2s}\,\Gradxz W\cdot\Gradxz\big(\psi_k\,\partial_{x_k}\widetilde W\big)\,\dx\,\dz .
	\]
	Using $\partial_z\psi_k = 0$,
	\begin{align*}
		\Gradxz W\cdot\Gradxz\big(\psi_k \partial_{x_k}\widetilde{W}\big)
		&= \big(\partial_{x_j}W\,\partial_{x_k}\widetilde{W}\big)\,\partial_{x_j}\psi_k
		\;+\; \psi_k\,\Gradxz W\cdot \partial_{x_k}\Gradxz \widetilde{W} \\
		&= \big(\Gradx W\otimes \Gradx \widetilde{W}\big) : \Gradx\pmb\psi + \psi_k\,\Gradxz W\cdot\partial_{x_k}\Gradxz \widetilde{W}.
	\end{align*}
	Substituting back,
	\begin{equation*}
		\int_{\mathbb T^d} f\,\pmb\psi\cdot\Gradx\Lambdaz(\widetilde{W}) \,\dx 
		= c_{d,s}\iint_\Omega z^{1-2s}\Big[ \big(\Gradx W\otimes\Gradx\widetilde{W}\big):\Gradx\pmb\psi
		+ \psi_k\,\Gradxz W\cdot\partial_{x_k}\Gradxz\widetilde{W} \Big] \,\dx\,\dz .
	\end{equation*}
	Exchanging the roles of $W,\widetilde{W}$ and adding, the two second terms combine into a perfect
	$x$-derivative $\psi_k\,\partial_{x_k}\big(\Gradxz W\cdot\Gradxz\widetilde W\big)$, giving
	\begin{align}
		&\int_{\mathbb T^d}\!\big( f\,\pmb\psi\cdot\Gradx\Lambdaz(\widetilde W) + \widetilde f\,\pmb\psi\cdot\Gradx\Lambdaz(W)\big)\,\dx \nonumber\\
		&\quad= c_{d,s}\iint_\Omega z^{1-2s}\Big[ \big(\Gradx W\otimes\Gradx\widetilde W + \Gradx\widetilde W\otimes\Gradx W\big):\Gradx\pmb\psi
		+ \psi_k\,\partial_{x_k}\big(\Gradxz W\cdot\Gradxz\widetilde W\big)\Big] \,\dx\,\dz. \nonumber
	\end{align}
	For the second term on the right, integrating by parts in $x$,
	\begin{align*}
	\iint_\Omega z^{1-2s}\,\psi_k\,\partial_{x_k}\big(\Gradxz W\cdot\Gradxz\widetilde W\big) \dx \, \dz
	&= -\iint_\Omega z^{1-2s}\,\big(\partial_{x_k}\psi_k\big)\big(\Gradxz W\cdot\Gradxz\widetilde W\big)  \dx \, \dz\\
	&= -\iint_\Omega z^{1-2s}\,\big(\Gradxz W\cdot\Gradxz\widetilde W\big)\,\Divx\pmb\psi  \dx \, \dz\\
	&= -\iint_\Omega z^{1-2s}\,\big(\Gradxz W\cdot\Gradxz\widetilde W\big)\,\mathbb I_d:\Gradx\pmb\psi  \dx \, \dz,
\end{align*}
	since $z^{1-2s}$ is independent of $x$ and $\mathbb T^d$ has no boundary. Substituting into the
	previous display, the bracket becomes
	\[
	\big(\Gradx W\otimes\Gradx\widetilde W + \Gradx\widetilde W\otimes\Gradx W\big):\Gradx\pmb\psi
	- \big(\Gradxz W\cdot\Gradxz\widetilde W\big)\,\mathbb I_d:\Gradx\pmb\psi
	= 2\,\T_{\rm ER}(W,\widetilde W):\Gradx\pmb\psi
	\]
	by \eqref{TUV-def}, which is \eqref{stress-id}. Taking $W=\widetilde W$ and $f=\widetilde f$ halves
	both sides and yields \eqref{stress-id2}. 
\end{pf}

\begin{rmk}\label{rem:stress-id-scope}
Formally, identity \eqref{stress-id} with $f=\vr-\bar\vr$ and $W=\widetilde{W}=U$ is precisely
\eqref{w-T}. In the statement of Lemma~\ref{lem:stress-id}, the regularity hypothesis
$\Gradx\Gradxz W\in L^2(\Om,z^{1-2s})$ is what makes $\Psi=\pmb\psi\cdot\Gradx\widetilde{W}$
admissible. It holds for $W=V$ (since $r\in C^1$; see Proposition~\ref{prop:CS_regularity}), but
\emph{fails} for $W=U$ at the energy level, where only $\Gradxz U\in L^2(\Om,z^{1-2s})$ is available.
In the proof of Lemma~\ref{lem:reduced} we justify this step through a regularization. 
\end{rmk}

\subsubsection*{Formal computation of the relative energy inequality}
Before comparing the dissipative solution with the strong solution, we give a formal
computation highlighting why the relative energy considered here is well suited to the task.
Let $(\vr,\vu)$ and $(r,\vv)$ be a strong solution of \eqref{eq:ERc}--\eqref{eq:ERr} with 
	\begin{equation}
	\vr,r,\ \vu, \vv\in C^1\big([0,T]\times\mathbb T^d\big),\;
	\vr  \geq  \underline{\vr}>0 \text{ and } r \geq \underline{r}>0 \text{ in } [0,T]\times \T^d,
\end{equation}
 emanating from the same initial data, thus $\bar \vr=\bar r$. 
 Let us denote
 \begin{align}\label{UV-formal}
 	U=\mathfrak{E}(\vr-\Ov{\vr}),\; V=\mathfrak{E}( r-\Ov{\vr})  \text{ and } U- V=\mathfrak{E}( \vr-r) .
 \end{align}
  Hence we have $\Psi:=\Lambdaz(V)=(-\Delta)^{-s}(r-\bar\vr)$ and $\Phi=\Lambdaz(U)$ likewise, set
\[
R:=\vr-r ,\qquad \Phi-\Psi=\Lambdaz(U-V)=(-\Delta)^{-s}R .
\]
We notice that at formal level the relative energy \eqref{RE:Def} is equivalent to
\begin{equation}\label{RE}
\Erel(\tau):=\int_{\mathbb T^d}\Big(\tfrac12\,\vr\,|\vu-\vv|^2+P(\vr\,|\,r)\Big)(\tau)\,dx
	\;+\;\tfrac12\,\big\|R(\tau)\big\|_{\dot H^{-s}(\mathbb T^d)}^2 ,
\end{equation}
with relative internal energy and pressure
\[
P(\vr\,|\,r):=P(\vr)-P(r)-P'(r)(\vr-r),\qquad
p(\vr\,|\,r):=p(\vr)-p(r)-p'(r)(\vr-r).
\]
All three terms are non-negative, and since $\gamma>1$ makes $P$ strictly convex and $r \geq \underline{r}>0 \text{ in } [0,T]\times \T^d$ implies $\Erel(\tau)=0$ iff $\vr(\tau)=r(\tau)$, $\vu(\tau)=\vv(\tau)$ $\vr$-a.e., and $\Phi(\tau)=\Psi(\tau)$.
\begin{itemize}[leftmargin=*]
	\item[-] \textbf{Kinetic and Potential energy:}
	These contributions are  similar to the classical barotropic ones; the only new feature is the body force $\vr\Gradx\Phi$. Using \eqref{eq:ERc}--\eqref{eq:ERm} for $(\vr,\vu)$ and $(r,\vv)$, the gradients $\Gradx\big(P'(\vr)-P'(r)\big)$ produced by the kinetic and internal parts cancel, and the two forces enter only through their difference:
	\begin{align}\label{f:f}
		&\frac{d}{dt}\int_{\mathbb T^d}\!\Big(\tfrac12\vr|\vu-\vv|^2+P(\vr\,|\,r)\Big) \dx
		\nonumber \\ 
		&=-\int_{\mathbb T^d}\!\vr\,(\vu-\vv)\otimes(\vu-\vv):\Gradx\vv
		-\int_{\mathbb T^d}\! p(\vr\,|\,r)\,\Divx\vv
		-\int_{\mathbb T^d}\!\vr\,(\vu-\vv)\cdot\Gradx(\Phi-\Psi).
	\end{align}
	\item[-] \textbf{Interaction energy:}
	For the interaction term we use only self-adjointness of $(-\Delta)^{-s}$ and the two continuity equations. Since $\partial_t R=-\Divx(\vr\vu-r\vv)$ and $(-\Delta)^{-s}R=\Phi-\Psi$,
	\begin{equation}\label{f:nl}
		\frac{d}{dt}\Big[\tfrac12\|R\|_{\dot H^{-s}}^2\Big]
		=\int_{\mathbb T^d}\partial_t R\;\big((-\Delta)^{-s}R\big)\,dx
		=\int_{\mathbb T^d}\big(\vr\vu-r\vv\big)\cdot\Gradx(\Phi-\Psi)\,dx .
	\end{equation}
\end{itemize}
Adding \eqref{f:f} and \eqref{f:nl} and using $ -\vr(\vu-\vv)+\big(\vr\vu-r\vv\big)=(\vr-r)\,\vv=R\,\vv ,$ it yields
\begin{equation}\label{mid}
	\frac{d}{dt}\Erel
	=-\int_{\mathbb T^d}\vr(\vu-\vv)\otimes(\vu-\vv):\Gradx\vv
	-\int_{\mathbb T^d} p(\vr\,|\,r)\,\Divx\vv
	+\int_{\mathbb T^d} R\,\vv\cdot\Gradx(\Phi-\Psi)\,dx .
\end{equation}
The nonlocal force has reduced to a single term pairing the density difference $R$ --- the \emph{source} of $\Phi-\Psi$ --- against $\vv\cdot\Gradx(\Phi-\Psi)$.

\emph{From the kernel to the extension.}
Using the extension \eqref{CS-ext} as described in \eqref{UV-formal} we have  $\Lambdaz(U-V)=\Phi-\Psi$. Moreover using \eqref{CS-prop1} it holds
\begin{equation}\label{iso}
	\tfrac{c_{d,s}}2\iint_\Omega z^{1-2s}\big|\Gradxz(U-V)\big|^2\,dx\,dz
	=\tfrac12\,\|R\|_{\dot H^{-s}}^2.
\end{equation}
On the other hand the stress identity (Lemma~\ref{lem:stress-id}) helps us to obtain
\begin{equation}\label{nonloc}
	\int_{\mathbb T^d} R\,\vv\cdot\Gradx\Lambdaz(U-V)\,dx
	=c_{d,s}\iint_\Omega \T_{\rm ER}[U-V]:\Gradx\vv\,dx\,dz,	
\end{equation}
Substituting \eqref{nonloc} into \eqref{mid} yields the formal relative energy identity
\begin{equation}\label{REI}
	\begin{aligned}
		\frac{d}{dt}\Erel
		&=-\int_{\mathbb T^d}\vr\,(\vu-\vv)\otimes(\vu-\vv):\Gradx\vv\,dx
		-\int_{\mathbb T^d} p(\vr\,|\,r)\,\Divx\vv\,dx \\
		&\quad +c_{d,s}\iint_\Omega \T_{\rm ER}[U-V]:\Gradx\vv\,dx\,dz .
	\end{aligned}
\end{equation}
The nonlocal force is thus absorbed \emph{exactly} into the $\mathbb{T}[U-V]$: relative energy \eqref{RE} and relative stress \eqref{REI} are built from the same difference.
We note that now, the relative entropy is
\begin{equation*}
	\Erel(\tau)
	:= \int_{\mathbb T^d}\Big( \tfrac12\,\vr\,|\vu - \vv|^2 + P(\vr\,|\,r) \Big)(\tau)\,dx
	\;+\; \frac{c_{d,s}}{2}\iint_\Omega z^{1-2s}\,\big|\Gradxz (U-V)\big|^2(\tau)\,dx\,dz,
\end{equation*} 

\emph{The relative energy inequality.}
Each term on the right of \eqref{REI} is now controlled by $\Erel$. 
First we have
\[
\Big|\int_{\mathbb T^d}\vr(\vu-\vv)\otimes(\vu-\vv):\Gradx\vv\Big|
\le\|\Gradx\vv\|_{L^\infty}\!\int_{\mathbb T^d}\vr|\vu-\vv|^2\le \Cv\,\Erel .
\]
Then, by the comparison estimate for the relative pressure ($0\le p(\vr\,|\,r)\le C(\underline r,\overline r)\,P(\vr\,|\,r)$ for $\gamma>1$, $r\in[\underline r,\overline r]$), we obtain
\[
\Big|\int_{\mathbb T^d} p(\vr\,|\,r)\,\Divx\vv\Big|
\le\|\Divx\vv\|_{L^\infty}\!\int_{\mathbb T^d} p(\vr\,|\,r)
\le C(\underline r,\overline r)\,\Cv\,\Erel .
\]
Finally, For the stress term, the pointwise bound $|\Gradx W|\le|\Gradxz W|$ gives $$|\T_{\rm ER}[W]:\Gradx\vv|\le z^{1-2s}|\Gradxz W|^2\big(\|\Gradx\vv\|_{L^\infty_x}+\tfrac12\|\Divx\vv\|_{L^\infty_x}\big),$$ so with \eqref{iso}, it yields
\[
\Big|c_{d,s}\iint_\Omega \T_{\rm ER}[U-V]:\Gradx\vv\Big|
\le \tfrac12\Cv\, c_{d,s}\iint_\Omega z^{1-2s}|\Gradxz(U-V)|^2
= \tfrac12\Cv\,\|R\|_{\dot H^{-s}}^2\le \Cv\,\Erel .
\]
No fractional interpolation enters: the bulk term is estimated algebraically, uniformly for every $s\in(0,1)$ and $\gamma>1$. Collecting the three bounds, we deduce 
\begin{equation}\label{gron}
	\frac{d}{dt}\Erel(\tau)\ \le\ C\big(\underline r,\overline r,\Cv\big)\,,\Erel(\tau),
	\qquad \tau\in[0,T),
\end{equation}
which is of Gr\"onwall form, whence $\Erel(\tau)\le \Erel(0)\,\exp\!\big(C\Cv\,\tau\big)$ on $[0,T)$. In particular, if two strong solutions share the same data then $\Erel(0)=0$, so $\Erel\equiv0$ and $\vr=r$, $\vu=\vv$, $\Phi=\Psi$. The rigorous version against a dissipative weak solution where the defect measures of Definition~\ref{def:dissipative_weak_sol} enter \eqref{REI} with a favourable sign, is carried out next. 
\begin{rmk}
	One import observation each term in the R.H.S of \eqref{REI} is controlled by Kinetic, Potential and interaction (in extended variable) energy respectively.This one of the key point in this method that allows us to give state the Theorem \ref{thm:ws} independently of $\gamma$ and $\beta$.
\end{rmk}

	\subsection{The relative energy inequality and weak--strong uniqueness}
	\label{sec:REI-rigorous}

	\begin{df}[Relative energy]\label{def:relenergy}
		For $(\vr,\mathbf m, U)$ and $(r,\vv,V)$ as above, set
		\begin{equation}\label{eq:RE}
			\Erel(\tau)
			:= \int_{\mathbb T^d}\Big( \frac12\,\vr\,\left| \frac{\vm}{\vr} - \vv\right|^2 + P(\vr\,|\,r) \Big)(\tau)\,dx
			\;+\; \frac{c_{d,s}}{2}\iint_\Omega z^{1-2s}\,\big|\Gradxz (U-V)\big|^2(\tau)\,dx\,dz,
		\end{equation}
		with the relative internal energy and relative pressure
		\[
		P(\vr\,|\,r) := P(\vr) - P(r) - P'(r)(\vr - r).
		\]
		The kinetic term is understood in conservative variables as $|\mathbf m - \vr\vv|^2/(2\vr)$, with value $0$ where $\vr = \mathbf m = 0$.
	\end{df}
	
\begin{rmk}[Relation between $P(\cdot|\cdot)$ and $p(\cdot| \cdot)$ ]\label{rem:relP} Recall that 
	\[	p(\vr\,|\,r) := p(\vr) - p(r) - p'(r)(\vr - r).\]
	For $P(\vr)=\tfrac{a}{\gamma-1}\vr^\gamma$ and $p(\vr)=a\vr^\gamma$ one has $p=(\gamma-1)P$,
	so the relative quantities are proportional, $p(\vr\,|\,r)=(\gamma-1)\,P(\vr\,|\,r)$. In
	particular $P(\vr\,|\,r)\ge0$, with equality iff $\vr=r$ (strict convexity of $P$ for
	$\gamma>1$).
\end{rmk}

\begin{rmk}[Coercivity]\label{rem:coercivity}
	Each of the three terms in \eqref{eq:RE} is nonnegative, so $\Erel(\tau)\ge0$. By linearity
	of the extension, $U-V$ is the extension of $\vr-r$, and the isometry \eqref{CS-prop1}
	identifies the bulk term with the negative Sobolev distance of the densities,
	\begin{equation}\label{eq:BI}
		\frac{c_{d,s}}{2}\displaystyle\iint_{\Omega} z^{1-2s}\,\big|\Gradxz(U-V)\big|^2\,dx\,dz
		\;=\; \frac12\,\|\vr-r\|_{\dot H^{-s}(\mathbb T^d)}^2 .
	\end{equation}
	If $\vr(\tau)=r(\tau)$ and $\vu(\tau)=\vv(\tau)$, then $U(\tau)=V(\tau)$ by
	linearity, and every term in \eqref{eq:RE} vanishes; hence $\Erel(\tau)=0$.
	Conversely, suppose $\Erel(\tau)=0$. Then each nonnegative term vanishes separately. From
	$\displaystyle\int_{\mathbb T^d} P(\vr\,|\,r)\,dx=0$ and the strict convexity of $P$
	(Remark~\ref{rem:relP}), we first obtain
	\[
	\vr(\tau)=r(\tau)\qquad\text{a.e. on }\mathbb T^d .
	\]
	Since the strong solution satisfies $r \geq \underline{r}>0 \text{ in } [0,T]\times \T^d$, the density is now bounded below, and
	$\displaystyle\int_{\mathbb T^d}\tfrac12\,\vr\,|\vu-\vv|^2\,dx=0$ then forces
	\[
	\vu(\tau)=\vv(\tau)\qquad\text{a.e. on }\mathbb T^d ,
	\]
	and hence $U(\tau)=V(\tau)$ by \eqref{eq:BI}. Thus $\Erel(\tau)=0$ if and only if
	$(\vr,\vu,U)(\tau)=(r,\vv,V)(\tau)$.
\end{rmk}

	\subsubsection{The reduced relative energy identity}
	
	We first expand $\Erel$ against arbitrary smooth comparison fields, using only the weak identities of the dissipative solution. 
	
	\begin{lemma}[Relative energy inequality I]\label{lem:generic}
		Let $r\in C^1([0,T]\times\mathbb T^d)$ with $r\ge\underline r>0$, $\vv\in C^1([0,T]\times\mathbb T^d;\mathbb R^d)$, $\Ov{r}=\Ov{\vr}$ and let $V=\mathfrak{E}(r-\bar\vr)$ following \eqref{CS-ext} and \eqref{Not:CSext}. Then for a.e.\ $\tau\in(0,T)$ it holds
	\begin{equation}\label{REI1}
		\begin{aligned}
			\Erel(\tau)+&\int_{\mathbb T^d}d\mathcal E_{fluid}(\tau)+\iint_{\overline\Omega}d\mathcal E_{ext}(\tau)
			\\
			\ \le\ \Erel(0)&-\int_0^\tau\!\!\int_{\mathbb T^d}\vr \lr{\tfrac{\vm}{\vr}-\vv}\otimes\lr{\tfrac{\vm}{\vr}-\vv}:\Gradx\vv \dx\dt 
			-\int_0^\tau\!\!\int_{\mathbb T^d}p(\vr\,|\,r)\Divx\vv\dx \dt \\
			&\quad-\int_0^\tau\!\!\int_{\mathbb T^d}\vr\lr{\tfrac{\vm}{\vr}-\vv} \cdot\Big(\partial_t\vv+\vv\cdot\Gradx\vv+\tfrac1r\Gradx p(r)+\Gradx\Lambdaz(V)\Big) \dx \dt \\
			&\quad+\int_0^\tau\!\!\int_{\mathbb T^d}\Big(\tfrac{r-\vr}{r}p'(r)+\Lambdaz(V)-\Lambdaz(U)\Big)\big(\partial_t r+\Divx(r\vv)\big) \dx \dt \\
			&\quad+c_{d,s}\int_0^\tau\!\!\iint_\Omega\T[U]:\Gradx\vv \dx\, \dz\,  \dt
			-\bar\vr\int_0^\tau\!\!\int_{\mathbb T^d}\Lambdaz(U)\Divx\vv \dx \dt \\
			&\quad-\int_0^\tau\!\!\int_{\mathbb T^d}(\vr-r)\,\vv\cdot\Gradx\Lambdaz(V) \dx \dt
			+\int_0^\tau\!\!\int_{\mathbb T^d} \Divx\lr{r\,\vv} \Lambdaz(U) \dx \dt \\
			&\quad-\int_0^\tau\!\!\int_{\mathbb T^d}\Gradx\vv:d\mathcal R_{fluid} \dx \dt 
			+c_{d,s}\int_0^\tau\!\!\iint_{\overline\Omega}\Gradx\vv:d\mathcal R_{ext} \dx \dt = \sum_{i=1}^{11}\mathcal{T}_i
		\end{aligned}
	\end{equation}
	\end{lemma}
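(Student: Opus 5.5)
The plan is the standard dissipative relative-energy expansion: write $\Erel(\tau)$ as the energy of the dissipative solution, plus terms linear in $(\vr,\vm,U)$ tested against smooth functions of $(r,\vv,V)$, plus a remainder depending only on the strong fields. Concretely,
\[
\Erel=\int_{\mathbb T^d}\Big(\tfrac12\tfrac{|\vm|^2}{\vr}+P(\vr)\Big)+\tfrac{c_{d,s}}2\iint_\Omega z^{1-2s}|\Gradxz U|^2
-\int_{\mathbb T^d}\vm\cdot\vv+\int_{\mathbb T^d}\vr\big(\tfrac12|\vv|^2-P'(r)\big)
-c_{d,s}\iint_\Omega z^{1-2s}\Gradxz U\cdot\Gradxz V+\mathcal F(r,\vv,V),
\]
with $\mathcal F=\int(p(r))+\tfrac{c_{d,s}}2\iint z^{1-2s}|\Gradxz V|^2$, using $P'(r)r-P(r)=p(r)$. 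Each of these pieces is then handled by an earlier ingredient of the definition.

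\begin{enumerate}
\item \textbf{Energy term.} This is bounded by the energy inequality \eqref{eq:ei}, which also produces the two defect integrals on the left-hand side of \eqref{REI1}.
\item \textbf{Term $-\int\vm\cdot\vv$.} Evaluate it through the momentum equation \eqref{eq:w-ERm} with $\pmb\psi=\vv$, which is admissible since $\vv\in C^1$ (after a mollification in time if needed). This generates the convective and pressure terms tested with $\Gradx\vv$, the stress term $c_{d,s}\iint\T[U]:\Gradx\vv$, the mean term $-\bar\vr\int\Lambdaz(U)\Divx\vv$, and the defect terms $\mathcal T_{10},\mathcal T_{11}$, with the signs displayed.
\item \textbf{Term $\int\vr(\tfrac12|\vv|^2-P'(r))$.} Use the continuity equation \eqref{eq:w-ERc} with $\phi=\tfrac12|\vv|^2-P'(r)$.
\item \textbf{Cross term.} By \eqref{eq:w-ext} with $\Psi=V$ (justified by approximating $V$ in $\dot H^1(\Omega,z^{1-2s})$), $c_{d,s}\iint z^{1-2s}\Gradxz U\cdot\Gradxz V=\int(\vr-\bar\vr)\Lambdaz(V)$. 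Its time evolution is then obtained from \eqref{eq:w-ERc} with $\phi=\Lambdaz(V)$, which lies in $C^1_{t,x}$ by Proposition~\ref{prop:CS_regularity}. Symmetrically, the $U$-dependence enters via $\int(r-\bar\vr)\Lambdaz(U)$, whose time derivative involves $\partial_t r$.
\item \textbf{Strong remainder.} Differentiate $\mathcal F$ directly, using $\tfrac{d}{dt}\tfrac{c_{d,s}}2\iint z^{1-2s}|\Gradxz V|^2=\int\partial_t r\,\Lambdaz(V)$.
\end{enumerate}

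After summing, the algebra is the classical barotropic regrouping. The terms $\vm\otimes\vm/\vr:\Gradx\vv$ and those from $\phi$ combine into $-\vr(\vu-\vv)\otimes(\vu-\vv):\Gradx\vv-\vr(\vu-\vv)\cdot(\partial_t\vv+\vv\cdot\Gradx\vv)$. The pressure terms, via $\Gradx P'(r)=\tfrac1r\Gradx p(r)$, combine into $-p(\vr|r)\Divx\vv-\vr(\vu-\vv)\cdot\tfrac1r\Gradx p(r)+\tfrac{r-\vr}{r}p'(r)(\partial_t r+\Divx(r\vv))$. Here one adds and subtracts $\partial_t r+\Divx(r\vv)$ exactly so that the inequality holds for arbitrary comparison fields.

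For the nonlocal terms, I would add and subtract $\vr(\vu-\vv)\cdot\Gradx\Lambdaz(V)$ and $(\Lambdaz(V)-\Lambdaz(U))(\partial_t r+\Divx(r\vv))$. Note that $\int\vm\cdot\Gradx\Lambdaz(V)$ comes from step 3. What is left is $-\int(\vr-r)\vv\cdot\Gradx\Lambdaz(V)$ and $\int\Divx(r\vv)\Lambdaz(U)$, with $\int\partial_t r\,\Lambdaz(U)$ absorbed. In this lemma I deliberately do not apply Lemma~\ref{lem:stress-id} to $U$, since it is not yet justified at energy level (Remark~\ref{rem:stress-id-scope}). So $\T[U]$, $\Lambdaz(U)$ and the $V$ terms are simply kept separately, which is exactly the form of $\mathcal T_7$–$\mathcal T_9$.

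The main obstacle is the rigorous time differentiation of the cross terms containing $U$. The dissipative solution gives no time regularity of $U$ beyond $L^\infty_t\dot H^1$, and \eqref{eq:w-ext} holds only for a.e.\ $t$. I would avoid differentiating $U$ entirely. Instead, write $c_{d,s}\iint z^{1-2s}\Gradxz U\cdot\Gradxz V=\int(\vr-\bar\vr)\Lambdaz(V)$ pointwise a.e.\ in $\tau$, and then use the weak continuity of $\vr$ in \eqref{eq:w-ERc}; this needs only $\partial_t\Lambdaz(V)=(-\Delta)^{-s}\partial_t r$, controlled by $r\in C^1$. Similarly, the remaining pairing $\int(r-\bar\vr)\Lambdaz(U)$ equals $\int(\vr-\bar\vr)\Lambdaz(V)$ by symmetry of $(-\Delta)^{-s}$, so no separate derivative of $U$ is needed. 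The a.e.\ $\tau$ caveat and the mean-zero bookkeeping ($\bar\vr=\bar r$, Remark~\ref{rem:trace-mean-zero}) then require only care.
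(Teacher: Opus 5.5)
Your proposal is correct and follows essentially the same route as the paper. It uses the same splitting of $\Erel$ into the weak energy, linear pairings and strong-only terms. It then applies the energy inequality, the momentum equation with $\pmb\psi=\vv$, and the continuity equation with $\tfrac12|\vv|^2-P'(r)$ and with $\Lambdaz(V)$. Finally, it uses the symmetry of the extension to trade $\int\vr\,\partial_t\Lambdaz(V)$ for $\int\partial_t r\,\Lambdaz(U)$, so that $U$ is never differentiated in time.

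There are two minor slips. The term $\int\vm\cdot\Gradx\Lambdaz(V)$ comes from your cross-term step 4, not step 3. And the symmetry identity you actually need is the time-differentiated one, $\int(\vr-\bar\vr)\,\partial_t\Lambdaz(V)=\int\partial_t r\,\Lambdaz(U)$, rather than the undifferentiated pairing you quote; the same symmetry of $(-\Delta)^{-s}$ gives it.
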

	
	\begin{pf}
		For a.e. $\tau \in (0,T)$, using $p(r)=rP'(r)-P(r)$ we expand \eqref{eq:RE} and it gives
		\begin{align*}
					\Erel(\tau)
				&=\int_{\mathbb T^d}\lr{\!\tfrac12 \frac{|\vm|^2}{\vr} +P(\vr)} \dx 
				-\int_{\mathbb T^d}\!\mathbf m\cdot\vv dx \dt 
				+\int_{\mathbb T^d}\!\vr\,\phi\;  \dx 
				+\int_{\mathbb T^d}\!p(r)\dx  \\
				&	\quad	+\tfrac{c_{d,s}}2\iint_\Omega\! z^{1-2s}|\Gradxz U|^2 \dx \dz 
				-c_{d,s}\!\iint_\Omega\! z^{1-2s}\Gradxz U\cdot\Gradxz V \dx \dz \\
				&\quad
				+\tfrac{c_{d,s}}2\iint_\Omega\! z^{1-2s}|\Gradxz V|^2 \dx \dz=\sum_{i=1}^{7} \mathcal{B}_i (\tau)
		\end{align*}
		with $\phi:=\tfrac12|\vv|^2-P'(r)$, all at time $\tau$.

		\smallskip
		\noindent\emph{The terms $\mathcal B_1+\mathcal B_5$.} Their sum is the total energy of the weak solution at time $\tau$, so the energy inequality \eqref{eq:ei} applies verbatim:
		\begin{equation}\label{B15}
			\mathcal B_1(\tau)+\mathcal B_5(\tau)+\int_{\mathbb T^d}d\mathcal E_{fluid}(\tau)+\iint_{\overline\Omega}d\mathcal E_{ext}(\tau)
			\ \le\ \mathcal B_1(0)+\mathcal B_5(0).
		\end{equation}
		This is the only inequality in the computation; all remaining steps are identities.

		\noindent\emph{The term $\mathcal B_2(\tau)$.} We use the momentum identity \eqref{eq:w-ERm} with $\pmb\psi=\vv$, admissible since $\vv\in C^1_{t,x}$.  This yields
		\begin{equation}\label{B2}
			\begin{aligned}
				\big[\mathcal B_2\big]_0^\tau
				&=-\int_0^\tau\!\!\int_{\mathbb T^d}\lr{\vm\cdot\partial_t\vv+\tfrac{\vm\otimes\vm}{\vr}:\Gradx\vv+p(\vr)\Divx\vv}\dx\dt\\
				&\quad+c_{d,s}\int_0^\tau\!\!\iint_\Omega\T_{\rm ER}[U]:\Gradx\vv \dx\dz\dt
				-\bar\vr\int_0^\tau\!\!\int_{\mathbb T^d}\Lambdaz(U)\Divx\vv \dx\dt\\
				&\quad-\int_0^\tau\!\!\int_{\mathbb T^d}\Gradx\vv:d\mathcal R_{fluid} \dx\dt
				+c_{d,s}\int_0^\tau\!\!\iint_{\overline\Omega}\Gradx\vv:d\mathcal R_{ext} \dx\dt .
			\end{aligned}
		\end{equation}

		\noindent\emph{The terms $\mathcal B_3+\mathcal B_4+\mathcal B_4$.} The weak continuity equation \eqref{eq:w-ERc} with $\phi=\tfrac12|\vv|^2-P'(r)\in C^1_{t,x}$, together with $\displaystyle \left[\int_{\T^d} p(r) \dx  \right]_{t=0}^{t=\tau}=\int_0^\tau\!\int_{\T^d}\partial_t p(r) \dx \dt$, gives
		\begin{equation}\label{B34}
			\begin{aligned}
			\big[\mathcal B_3+\mathcal B_4\big]_0^\tau
			=&\int_0^\tau\!\!\int_{\mathbb T^d}\lr{\vr\,\partial_t\tfrac12|\vv|^2+\vm\cdot\Gradx\tfrac12|\vv|^2}\dx\dt \\
			&+\int_0^\tau\!\!\int_{\mathbb T^d}\lr{-\vr\,\partial_tP'(r)-\vm\cdot\Gradx P'(r)+\partial_tp(r)}\dx\dt .
		\end{aligned}
		\end{equation}

		Next, We put a bit more focus on the Terms $\mathcal{B}_6$ and $\mathcal{B}_7$

		Here we use that $V=\mathfrak{E}(r-\bar\vr)$ with $\bar\vr=\bar r$. Note that, no equation for $(r,\vv)$ is invoked at this stage.

\emph{The term $\mathcal B_6(\tau)$.} Applying \eqref{eq:w-ext} with $\Psi=V$, and using
$\int_{\mathbb T^d}\Lambdaz(V)\,\dx=0$,
\begin{equation*}
	\mathcal B_6(\tau) = -\int_{\mathbb T^d}(\vr-\bar\vr)\,\Lambdaz(V)\,\dx
	= -\int_{\mathbb T^d}\vr\,\Lambdaz(V)\,\dx .
\end{equation*}
Since $r\in C^1((0,T)\times\mathbb T^d)$, the trace $\Lambdaz(V)\in C^1((0,T)\times\mathbb T^d)$.
Testing the weak continuity equation \eqref{eq:w-ERc} with $\phi=\Lambdaz(V)$ gives
\begin{equation}\label{B6-dt}
	\mathcal B_6(\tau)-\mathcal B_6(0)
	= -\int_0^\tau\!\!\int_{\mathbb T^d}\Big(\vr\,\partial_t\Lambdaz(V)
	+ \vm\cdot\Gradx\Lambdaz(V)\Big)\,\dx\,\dt .
\end{equation}

\smallskip
\noindent\emph{The term $\mathcal B_7(\tau)$.} Testing \eqref{eq:w-ext} with $\Psi=V$ once more
--- now reading it as the energy identity --- yields
\begin{equation}\label{B7-static}
	\mathcal B_7 = \tfrac{c_{d,s}}2\iint_\Omega z^{1-2s}\,|\Gradxz V|^2\,\dx\,\dz
	= \tfrac12\int_{\mathbb T^d}(r-\bar\vr)\,\Lambdaz(V)\,\dx .
\end{equation}
Since $E$ is linear and $t$-independent, $\partial_t V=E(\partial_t r)$, and $\partial_t r$ is
mean-zero as $\bar\vr$ is constant; differentiating \eqref{B7-static} in time and testing
\eqref{eq:w-ext} with $\Psi=\partial_t V$,
\begin{equation}\label{B7-symm}
	\frac{d}{dt}\,\mathcal B_7(t)
	= c_{d,s}\iint_\Omega z^{1-2s}\,\Gradxz V\cdot\Gradxz\partial_t V\,\dx\,\dz
	= \int_{\mathbb T^d}\partial_t r\,\Lambdaz(V)\,\dx ,
\end{equation}
the factor $\tfrac12$ absorbed because the two symmetric contributions of the product rule
coincide. Hence
\begin{equation}\label{B7-dt}
	\mathcal B_7(\tau)-\mathcal B_7(0)
	= \int_0^\tau\!\!\int_{\mathbb T^d}\partial_t r\,\Lambdaz(V)\,\dx\,\dt .
\end{equation}

It is convenient to combine $\mathcal B_6$ and $\mathcal B_7$. Since $\mathfrak{E}$ is linear and
$t$-independent, $\partial_t V=\mathfrak{E}(\partial_t r)$ is the extension of the mean-zero source
$\partial_t r$. Testing \eqref{eq:w-ext} for $U$ (source $\vr-\bar\vr$) with $\Psi=\partial_t V$,
and \eqref{eq:w-ext} for $\partial_t V$ (source $\partial_t r$) with $\Psi=U$, both have the same
left-hand side $\iint_\Omega z^{1-2s}\Gradxz U\cdot\Gradxz\partial_t V\,\dx\dz$; equating the
right-hand sides,
\[
\int_{\mathbb T^d}(\vr-\bar\vr)\,\partial_t\Lambdaz(V)\,\dx
= \int_{\mathbb T^d}\partial_t r\,\Lambdaz(U)\,\dx ,
\]
and since $\int_{\mathbb T^d}\partial_t\Lambdaz(V)\,\dx=0$, also
$\int_{\mathbb T^d}\vr\,\partial_t\Lambdaz(V)\,\dx=\int_{\mathbb T^d}\partial_t r\,\Lambdaz(U)\,\dx$.
Adding \eqref{B6-dt} and \eqref{B7-dt}, the two time derivatives of $\Lambdaz(V)$ cancel and there
remains
\begin{equation}\label{B67}
	\big[\mathcal B_6(s)+\mathcal B_7(s)\big]_{s=0}^{s=\tau}
	= -\int_0^\tau\!\!\int_{\mathbb T^d}\vm\cdot\Gradx\Lambdaz(V)\,\dx\,\dt
	+ \int_0^\tau\!\!\int_{\mathbb T^d}\partial_t r\,\big(\Lambdaz(V)-\Lambdaz(U)\big)\,\dx\,\dt .
\end{equation}

		We rewrite the second term in the R.H.S of \eqref{B67} as 

		\begin{align*}
			&	\int_{\mathbb T^d}\partial_t r\,\big(\Lambdaz(V)-\Lambdaz(U)\big) \dx \\
			&= \int_{\mathbb T^d}\big(\Lambdaz(V)-\Lambdaz(U)\big)\big(\partial_t r+\Divx(r\vv)\big) \dx
			+ \int_{\mathbb T^d}\big( r\,\vv\cdot \Gradx \Lambdaz(V)+ \Div \lr{ r\,\vv}\Lambdaz(U)\big) \dx .
		\end{align*}
The two terms are treated differently on purpose: $\Lambdaz(V)\in C^1_{t,x}$ carries a derivative, whereas $\Lambdaz(U)\in\dot H^{s}(\mathbb T^d)$ does not, and the pairing is kept in the form in which it is a Lebesgue integral.

	\smallskip
	
		\noindent\emph{Regrouping.} All identities below are pointwise a.e., with the convention that expressions carrying $\vr$ in the denominator vanish on $\{\vr=0\}$, where also $\vm=0$; note $\vm-\vr\vv=0$ there as well, so both sides vanish on the vacuum set. 
	
	The terms of \eqref{B2}, \eqref{B34} carrying derivatives of $\vv$ combine as follows. From $\vr\,\partial_t\tfrac12|\vv|^2=\vr\,\vv\cdot\partial_t\vv$,
	\[
	-\vm\cdot\partial_t\vv+\vr\,\partial_t\tfrac12|\vv|^2=-(\vm-\vr\vv)\cdot\partial_t\vv ,
	\]
	while $\vm\cdot\Gradx\tfrac12|\vv|^2=(\vm\otimes\vv):\Gradx\vv$ gives
	\begin{align*}
		-\tfrac{\vm\otimes\vm}{\vr}:\Gradx\vv+\vm\cdot\Gradx\tfrac12|\vv|^2
		&=-\tfrac{\vm\otimes(\vm-\vr\vv)}{\vr}:\Gradx\vv\\
		&=-\tfrac{(\vm-\vr\vv)\otimes(\vm-\vr\vv)}{\vr}:\Gradx\vv-(\vm-\vr\vv)\cdot\lr{\vv\cdot\Gradx\vv},
	\end{align*}
	where the last step used $\vm=(\vm-\vr\vv)+\vr\vv$ and $\big(\vv\otimes(\vm-\vr\vv)\big):\Gradx\vv=(\vm-\vr\vv)\cdot(\vv\cdot\Gradx\vv)$. Hence
	\begin{equation}\label{grp-kin}
		\begin{aligned}
			&	-\vm\cdot\partial_t\vv-\tfrac{\vm\otimes\vm}{\vr}:\Gradx\vv
			+\vr\,\partial_t\tfrac12|\vv|^2+\vm\cdot\Gradx\tfrac12|\vv|^2 \\
			&	=-\tfrac{(\vm-\vr\vv)\otimes(\vm-\vr\vv)}{\vr}:\Gradx\vv
			-(\vm-\vr\vv)\cdot\lr{\partial_t\vv+\vv\cdot\Gradx\vv} .			
		\end{aligned}
	\end{equation}
	The first term in the R.H.S of \eqref{grp-kin} is exactly leads to the term $\mathcal{T}_2$ in \eqref{REI1}.
	For the pressure terms we use $rP''(r)=p'(r)$, i.e.\ $\partial_tP'(r)=\tfrac{p'(r)}{r}\partial_tr$ and $\Gradx P'(r)=\tfrac1r\Gradx p(r)$, and split $\vm=(\vm-\vr\vv)+\vr\vv$ in the flux term. Then
	\begin{equation}\label{grp-pres}
		\begin{aligned}
			&\int_{\mathbb T^d}\lr{-p(\vr)\Divx\vv-\vr\,\partial_tP'(r)-\vm\cdot\Gradx P'(r)+\partial_tp(r)}\dx\\
			&\qquad=\int_{\mathbb T^d}\lr{-p(\vr\,|\,r)\Divx\vv-(\vm-\vr\vv)\cdot\tfrac1r\Gradx p(r)
				+\tfrac{r-\vr}{r}p'(r)\big(\partial_tr+\Divx(r\vv)\big)}\dx .
		\end{aligned}
	\end{equation}
	Clearly the first term in the  R.H.S of \eqref{grp-pres} is $\mathcal{T}_3$ in \eqref{REI1}.

	\smallskip

	 \noindent\emph{Conclusion.} In \eqref{B67} we split $\vm=(\vm-\vr\vv)+\vr\vv$ in the first term. Together with the identity above for $\partial_t r$, the contribution of $\mathcal B_6+\mathcal B_7$ thus consists of $-(\vm-\vr\vv)\cdot\Gradx\Lambdaz(V)$, of the continuity residual with coefficient $\Lambdaz(V)-\Lambdaz(U)$, and of the remainder
	\begin{align*}
			 -\int_{\mathbb T^d}\vr\,\vv\cdot\Gradx\Lambdaz(V)\dx 
		 + \int_{\mathbb T^d}\big( r\,\vv\cdot \Gradx \Lambdaz(V)+ \Div \lr{ r\,\vv}\Lambdaz(U)\big) \dx\\
		=-\int_{\mathbb T^d}(\vr-r)\,\vv\cdot\Gradx\Lambdaz(V)\dx+\int_{\mathbb T^d} \Div\lr{r\,\vv} \Lambdaz(U)\dx ,
	\end{align*}
	
	 which is the equals to $\mathcal{T}_8+\mathcal{T}_9$ in \eqref{REI1}. \par 
	 Next, adding now \eqref{B15}, \eqref{B2}, \eqref{B34} in the form \eqref{B67}, \eqref{grp-kin} and \eqref{grp-pres}, the three terms proportional to $\vm-\vr\vv$ assemble the momentum residual $\mathcal{T}_4$ in \eqref{REI1}, i.e. 
	 \[
	 -\int_0^\tau\!\!\int_{\mathbb T^d}(\vm-\vr\vv)\cdot\Big(\partial_t\vv+\vv\cdot\Gradx\vv+\tfrac1r\Gradx p(r)+\Gradx\Lambdaz(V)\Big)\dx\dt .
	 \]
	 Meanwhile the residual terms of \eqref{grp-pres} and of \eqref{B67} assemble the continuity residual with coefficient $\tfrac{r-\vr}{r}p'(r)+\Lambdaz(V)-\Lambdaz(U)$, that is the term $\mathcal{T}_5$ in \eqref{REI1} . Hence, collecting all the information from above we obtain the relative energy inequality \eqref{REI1}.

	\end{pf}

	\medskip

	Our next goal is to substitute the strong solution $(r,\vv)$ in \eqref{REI1}.
\begin{lemma}[Relative energy inequality II]\label{lem:reduced}
	Let $(r,\vv,V)\in\mathcal C_{\mathrm{str}}(T)$ and $\Ov{\vr}=\int_{\T^d} \vr_0 \dx = \int_{\T^d} r_0 \dx=\Ov{r}$. Then for a.e.\ $\tau\in(0,T)$,
	\begin{equation}\label{REI2}
		\begin{aligned}
			&\Erel(\tau)+\int_{\mathbb T^d}d\mathcal E_{fluid}(\tau)+\iint_{\overline\Omega}d\mathcal E_{ext}(\tau)
			\ \le\ \Erel(0)\\
			&\quad-\int_0^\tau\!\!\int_{\mathbb T^d}\vr \lr{\tfrac{\vm}{\vr}-\vv} \otimes \lr{\tfrac{\vm}{\vr}-\vv}:\Gradx\vv \dx \dt
			-\int_0^\tau\!\!\int_{\mathbb T^d}p(\vr\,|\,r)\Divx\vv \dx \dt
			\;+\;\mathcal N\\
			&\quad-\int_0^\tau\!\!\int_{\mathbb T^d}\Gradx\vv:d\mathcal R_{fluid}
			+c_{d,s}\int_0^\tau\!\!\iint_{\overline\Omega}\Gradx\vv:d\mathcal R_{ext},
		\end{aligned}
	\end{equation}
	where $\mathcal{N}$ the contribution coming due to extension PDE is given by
	\begin{equation}\label{N}
		\begin{aligned}
			\mathcal{N}
			&=c_{d,s}\int_0^\tau\!\!\iint_\Omega\T_{\rm ER}[U]:\Gradx\vv \dx \dz \dt
			+ \int_0^\tau\!\!\int_{\mathbb T^d}\Divx\lr{(r-\bar\vr)\,\vv}\Lambdaz(U) \dx \dt  \\
			&\quad
			-\int_0^\tau\!\!\int_{\mathbb T^d}(\vr-\bar\vr)\,\vv\cdot\Gradx\Lambdaz(V) \dx \dt  +\int_0^\tau\!\!\int_{\mathbb T^d}(r-\bar\vr)\,\vv\cdot\Gradx\Lambdaz(V) \dx \dt.
		\end{aligned}
	\end{equation}
	Moreover, it holds that 
	\begin{align}\label{N-final}
		\mathcal N \;=\; c_{d,s}\int_0^\tau\!\!\iint_\Omega \T_{\rm ER}[U-V] : \Gradx\vv \,dx\,dz\,dt .
	\end{align}
\end{lemma}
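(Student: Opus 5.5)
The plan is to apply Lemma~\ref{lem:generic} with the strong solution itself as the comparison field, kill the residual terms using the equations satisfied by $(r,\vv)$, and then identify the leftover nonlocal terms with $\T_{\rm ER}[U-V]$ via the polarization identity \eqref{Tpolar} and the stress identity of Lemma~\ref{lem:stress-id}.

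\emph{Step 1: the residual terms vanish.} Since $(r,\vv)$ is a classical solution and $r\ge\underline r>0$, the continuity equation gives $\partial_t r+\Divx(r\vv)=0$ pointwise, so $\mathcal T_5=0$. Dividing the momentum equation by $r$ and using the continuity equation, together with $\Lambdaz(V)=(-\Delta)^{-s}(r-\bar\vr)$ (Proposition~\ref{prop:CS_weak}), gives
\[
\partial_t\vv+\vv\cdot\Gradx\vv+\tfrac1r\Gradx p(r)+\Gradx\Lambdaz(V)=0,
\]
so $\mathcal T_4=0$. The terms $\mathcal T_2,\mathcal T_3,\mathcal T_{10},\mathcal T_{11}$ are already in the form of \eqref{REI2}.

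\emph{Step 2: regroup into $\mathcal N$.} It remains to show that $\mathcal T_6+\mathcal T_7+\mathcal T_8+\mathcal T_9$ equals \eqref{N}. Since $\bar\vr$ is a constant, $-\bar\vr\,\Divx\vv+\Divx(r\vv)=\Divx\big((r-\bar\vr)\vv\big)$, so $\mathcal T_7+\mathcal T_9$ is the second term of \eqref{N}. Writing $\vr-r=(\vr-\bar\vr)-(r-\bar\vr)$ splits $\mathcal T_8$ into the last two terms of \eqref{N}.

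\emph{Step 3: the identity \eqref{N-final}.} Expanding $\T_{\rm ER}[U-V]$ by \eqref{Tpolar}, it suffices to prove, for a.e.\ $t$,
\begin{align*}
2c_{d,s}\iint_\Omega\T_{\rm ER}(U,V):\Gradx\vv &= \int_{\mathbb T^d}(\vr-\bar\vr)\,\vv\cdot\Gradx\Lambdaz(V)+\int_{\mathbb T^d}(r-\bar\vr)\,\vv\cdot\Gradx\Lambdaz(U),\\
c_{d,s}\iint_\Omega\T_{\rm ER}[V]:\Gradx\vv &= \int_{\mathbb T^d}(r-\bar\vr)\,\vv\cdot\Gradx\Lambdaz(V),
\end{align*}
where in the first identity the last term is read as $-\int_{\mathbb T^d}\Divx\big((r-\bar\vr)\vv\big)\Lambdaz(U)$. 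Inserting these two identities, $\mathcal N$ collapses exactly to $c_{d,s}\iint\big(\T_{\rm ER}[U]-2\T_{\rm ER}(U,V)+\T_{\rm ER}[V]\big):\Gradx\vv$, and integrating in time gives \eqref{N-final}.
\begin{itemize}
\item The identity for $\T_{\rm ER}[V]$ is \eqref{stress-id2}. It is admissible because $r\in C^1$ gives $\Gradx\Gradxz V\in L^2(\Omega,z^{1-2s})$ (Proposition~\ref{prop:CS_regularity}).
\item The identity for $\T_{\rm ER}(U,V)$ is \eqref{stress-id}, but $U$ lacks the regularity assumed there (Remark~\ref{rem:stress-id-scope}). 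This is the main obstacle.
\end{itemize}

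\emph{Handling the obstacle.} I would mollify in $x$ only: set $U_\varepsilon:=\eta_\varepsilon\ast_x U$. Because the extension is a Fourier multiplier in $x$, this gives $U_\varepsilon=\mathfrak E\big(\eta_\varepsilon\ast(\vr-\bar\vr)\big)$, and $\Gradx\Gradxz U_\varepsilon\in L^2(\Omega,z^{1-2s})$. Apply \eqref{stress-id} to the pair $(U_\varepsilon,V)$, then let $\varepsilon\to0$ term by term:
\begin{itemize}
\item $\Gradxz U_\varepsilon\to\Gradxz U$ strongly in $L^2(\Omega,z^{1-2s})$, so $\T_{\rm ER}(U_\varepsilon,V)\to\T_{\rm ER}(U,V)$ in $L^1(\Omega)$, since $\Gradxz V$ lies in the same weighted $L^2$.
\item $\eta_\varepsilon\ast(\vr-\bar\vr)\to\vr-\bar\vr$ in $L^\gamma$, paired against the continuous function $\vv\cdot\Gradx\Lambdaz(V)$.
\item The term carrying $\Gradx\Lambdaz(U_\varepsilon)$ is first moved onto $\Divx\big((r-\bar\vr)\vv\big)\in C$. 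It then converges because $\Lambdaz U_\varepsilon\to\Lambdaz U$ in $\dot H^s$ by continuity of the trace.
\end{itemize}
Time integrability follows from $U\in L^\infty(0,T;\dot H^1(\Omega,z^{1-2s}))$ and $\vv\in C^1$, so dominated convergence in $t$ closes the argument.
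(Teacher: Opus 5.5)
Your proof is correct and follows the paper's argument. The strong equations kill $\mathcal T_4,\mathcal T_5$, the terms $\mathcal T_6$--$\mathcal T_9$ regroup into $\mathcal N$, the $\T_{\rm ER}[V]$ term comes from \eqref{stress-id2}, and the cross term $\T_{\rm ER}(U,V)$ comes from applying \eqref{stress-id} to the $x$-mollified $U_\varepsilon$ (exactly the paper's $\mathfrak E[\vr_\epsilon-\bar\vr]$), letting $\varepsilon\to0$, and concluding with \eqref{Tpolar}; your sign $-\int_{\mathbb T^d}\Divx\big((r-\bar\vr)\vv\big)\Lambdaz(U)\,dx$ is the right one (the paper's \eqref{eq:N-cross} has a $+$ there, a slip that does not affect \eqref{N-final}), and your $L^\gamma$ limit for the density term is a slightly simpler alternative to the paper's $\dot H^{-s}$--$\dot H^{s}$ duality bound.
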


\begin{pf}
	Since $(r,\vv)$ is a strong solution, it holds
	\begin{align*}
		&\partial_t r+\Divx(r\vv)=0 ,\\
		&\partial_t\vv+\vv\cdot\Gradx\vv+\tfrac1r\Gradx p(r)+\Gradx\Lambdaz(V)=0 ,
	\end{align*}
	so in \eqref{REI1} we have $\mathcal{T}_4=0$ and $\mathcal{T}_5=0$. Note that using $\Ov{\vr}=\Ov{r}$ and Remark \ref{rem:trace-mean-zero} it holds 
	\begin{align*}
		\mathcal{T}_6+\mathcal{T}_7+\mathcal{T}_8+\mathcal{T}_9= \mathcal{N},
	\end{align*}
	as defined in \eqref{N} and complete the proof of \eqref{REI2}. Next to identify $\mathcal N$, we will use \eqref{TUV-def} and Lemma  \ref{lem:stress-id}.

	The first term in the R.H.S of \eqref{N} is already in the required form. The last term of \eqref{N} is the identity \eqref{stress-id} with $f=r-\bar\vr$, $W=V$, $\pmb\psi=\vv$, admissible since $\Gradx\Gradxz V\in L^2(\Omega,z^{1-2s})$ by the regularity class of $v$ assumed in \eqref{strong-class} and the Proposition \ref{prop:CS_regularity}: 
	\begin{equation}\label{eq:N-TV}
		\int_0^\tau\!\!\int_{\mathbb T^d}(r-\bar\vr)\,\vv\cdot\Gradx\Lambdaz(V) \dx 
		=c_{d,s}\int_0^\tau\!\!\iint_\Omega\T_{\rm ER}[V]:\Gradx\vv \dx \dz.
	\end{equation}

\emph{The cross terms $\T_{\rm ER}(U,V)$.} The two middle terms of \eqref{N} form the left-hand side of the identity \tcbb{\eqref{stress-id}} with $(f,W)=(\vr-\bar\vr,U)$ and $(\widetilde f,\widetilde W)=(r-\bar\vr,V)$. Unfortunately, Lemma~\ref{lem:stress-id} does not apply as it stands: its proof tests the extension equation with the field $\pmb\psi\cdot\Gradx U$, whose full gradient involves $\Gradx\Gradxz U$, and at the energy level we control only $\Gradxz U\in L^2(\Omega,z^{1-2s})$ (Remark~\ref{rem:stress-id-scope}).

We therefore regularize in space (the variable $x$), and we do so \emph{once}, at the level
of the density. Let $\eta_\epsilon$ be a mollifier on $\mathbb T^d$ with
$\int_{\mathbb T^d}\eta_\epsilon\,dx=1$, and set
\[
\vr_\epsilon:=\eta_\epsilon\ast\vr,\qquad U_\epsilon:=\mathfrak E[\vr_\epsilon-\bar\vr].
\]
Mollification preserves the mean, $\overline{\vr_\epsilon}=\bar\vr$, so $\vr_\epsilon-\bar\vr$
is mean-zero and $U_\epsilon$ is well defined. Only the density carries $\epsilon$: $U_\epsilon$
is not regularized on its own but \emph{defined} as an extension, so tangential smoothness (in the $x$-variable) of
$\vr_\epsilon$ is inherited by $U_\epsilon$ and $\Gradx\Gradxz U_\epsilon\in L^2(\Omega,z^{1-2s})$,
which is what Lemma~\ref{lem:stress-id} requires. The isometry \eqref{CS-prop1} controls the regularization error through the negative Sobolev norm of the density difference, which tends to zero as $\epsilon\to0$; the stress identity for $(U,V)$ is thereby recovered.

Concretely, since $\mathfrak E$ is linear, $U_\epsilon-U=\mathfrak E[\vr_\epsilon-\vr]$, and
\eqref{CS-prop1} gives, for a.e.\ $t\in(0,\tau)$,
\begin{equation}\label{eq:eps-energy}
	c_{d,s}\big\|\Gradxz(U_\epsilon-U)\big\|_{L^2(\Omega,z^{1-2s})}^2
	=\big\|\Lambdaz(U_\epsilon)-\Lambdaz(U)\big\|_{\dot H^{s}(\mathbb T^d)}^2
	=\big\|\vr_\epsilon-\vr\big\|_{\dot H^{-s}(\mathbb T^d)}^2
	\xrightarrow[\epsilon\to0]{}0 .
\end{equation}
Additionally, we also have $ 	|| \vr- \vr_\epsilon ||_{L^p_t L^\gamma_x}  	\xrightarrow[\epsilon\to0]{}0  \text{ for } 1\leq p<\infty $.
Rewriting the $\Lambdaz(U)$ term of \eqref{N} and integrating by parts in $x$,
\begin{align*}
	\int_0^\tau\!\!\int_{\mathbb T^d}\Divx\!\big((r-\bar\vr)\,\vv\big)\,\Lambdaz(U)\,dx\,dt
	&=\int_0^\tau\!\!\int_{\mathbb T^d}\Divx\!\big((r-\bar\vr)\,\vv\big)\big(\Lambdaz(U)-\Lambdaz(U_\epsilon)\big)\,dx\,dt\\
	&\quad-\int_0^\tau\!\!\int_{\mathbb T^d}(r-\bar\vr)\,\vv\cdot\Gradx\Lambdaz(U_\epsilon)\,dx\,dt .
\end{align*}
For fixed $\epsilon$ one has $\Gradx\Gradxz U_\epsilon\in L^2(\Omega,z^{1-2s})$, so
Lemma~\ref{lem:stress-id} applies to the pair $(U_\epsilon,V)$ ($V$ being smooth):
\begin{equation}\label{eq:polar-eps}
	\int_{\mathbb T^d}(\vr_\epsilon-\bar\vr)\,\vv\cdot\Gradx\Lambdaz(V) \dx
	+\int_{\mathbb T^d}(r-\bar\vr)\,\vv\cdot\Gradx\Lambdaz(U_\epsilon) \dx 
	=2\,c_{d,s}\iint_\Omega\T_{\rm ER}(U_\epsilon,V):\Gradx\vv \dx \dz.
\end{equation}
Combining the two displays gives the regularized identity
\begin{align}\label{stress-reg}
	&\int_0^\tau\ \int_{\mathbb T^d}(\vr-\bar\vr)\,\vv\cdot\Gradx\Lambdaz(V) \dx \dt 
	+\int_0^\tau\!\!\int_{\mathbb T^d}\Divx\!\big((r-\bar\vr)\,\vv\big)\Lambdaz(U)\,dx\,dt \nonumber\\ 
	&	-2c_{d,s}\int_0^\tau\!\!\iint_\Omega\T_{\rm ER}(U,V):\Gradx\vv\,dx\,dz\,dt\nonumber \\
	&=\int_0^\tau\!\!\int_{\mathbb T^d}\Divx\!\big((r-\bar\vr)\,\vv\big)\big(\Lambdaz(U)-\Lambdaz(U_\epsilon)\big)\,dx\,dt
	-2c_{d,s}\int_0^\tau\!\!\iint_\Omega\T_{\rm ER}(U_\epsilon-U,V):\Gradx\vv\,dx\,dz\,dt \nonumber \\
	&\qquad+\int_0^\tau\!\!\int_{\mathbb T^d}(\vr_\epsilon-\vr)\,\vv\cdot\Gradx\Lambdaz(V)\,dx\,dt = \sum_{i=1}^{3} \ \mathcal{Z}_i.
\end{align}
As $\epsilon\to0$ each $\mathcal Z_i$ in the RHS of \eqref{stress-reg} vanishes: using \eqref{eq:eps-energy} and $r,\vv\in C^1_{t,x}$ it holds
\begin{align*}
	&	\left| \mathcal{Z}_1\right|  \leq C \lr{ || r ||_{C^1_{t,x}} , || \vv||_{C^1_{t,x}} } \int_0^\tau \big\|\Lambdaz(U_\epsilon)-\Lambdaz(U)\big\|_{\dot H^{s}_x}  \dt,\\
	&	\left| \mathcal{Z}_2\right|  \leq C \lr{  || \vv||_{C^1_{t,x}} } \int_0^\tau \|\Gradxz(U_\epsilon-U)\|_{L^2(\Om,z^{1-2s})}\|\Gradxz V\|_{L^2(\Om,z^{1-2s})}\tcbb{\,}\dt  \\
	&\left| \mathcal{Z}_3\right|  \leq C \lr{ || r ||_{C^1_{t,x} }, || \vv||_{C^1_{t,x}} } \int_0^\tau \|\vr_\epsilon-\vr\|_{\dot H^{-s}} || \Gradx\Lambdaz(V) ||_{\dot H^{s}}  \dt ,
\end{align*}
where we used 
$\|\T_{\rm ER}(U_\epsilon-U,V)\|_{L^1(\Omega)} \leq C  \|\Gradxz(U_\epsilon-U)\|_{L^2(\Om,z^{1-2s})}\|\Gradxz V\|_{L^2(\Om,z^{1-2s})}$,\;
$\Gradx\vv\in L^\infty_{t,x}$ and 
\[
\|\ \vv\cdot\Gradx\Lambdaz(V) - \langle \vv\cdot\Gradx\Lambdaz(V) \rangle  \|_{\dot H^s}
\le
C\,\|\vv \|_{W^{1,\infty}}
\|\Gradx\Lambdaz(V)\|_{\dot H^s},
\qquad 0<s<1.
\] Each right-hand factor tends to $0$ as $\epsilon\to0$, so
\begin{align}\label{eq:N-cross}
	&\int_0^\tau \int_{\mathbb T^d}(\vr-\bar\vr)\,\vv\cdot\Gradx\Lambdaz(V) \dx \dt
	+\int_0^\tau\!\!\int_{\mathbb T^d}\Divx\!\big((r-\bar\vr)\,\vv\big)\Lambdaz(U)\,dx\,dt \nonumber\\
	&\quad 		=2c_{d,s}\int_0^\tau\!\!\iint_\Omega\T_{\rm ER}(U,V):\Gradx\vv\,dx\,dz\,dt .
\end{align}
Substituting \eqref{eq:N-TV} and \eqref{eq:N-cross} into \eqref{N} and using \eqref{Tpolar}, it holds
\begin{align*}
	\mathcal N
	&=c_{d,s}\int_0^\tau\!\!\iint_\Omega\big(\T_{\rm ER}[U]-2\,\T_{\rm ER}(U,V)+\T_{\rm ER}[V]\big):\Gradx\vv \dx \,\dz \dt \\
	&=c_{d,s}\int_0^\tau\!\!\iint_\Omega\T_{\rm ER}[U-V]:\Gradx\vv \dx \,\dz\dt.
\end{align*}

\end{pf}

\begin{rmk}\label{rem:reg-scope}
	The regularization enters only in \eqref{eq:polar-eps} and is removed as $\epsilon\to0$ through
	\eqref{eq:eps-energy}; it touches neither the momentum equation nor the defect measures. In the
	final identity \eqref{N-final}, $U$ appears only through $\Gradxz U\in L^\infty(0,T;L^2(\Omega,z^{1-2s}))$,
	i.e.\ at the energy level.
\end{rmk}

\subsubsection{Proof of the main Theorem \ref{thm:ws}}

\begin{pf}
Throughout, $(\vr,\vm,U,\mathcal R_{fluid},\mathcal R_{ext},\mathcal E_{fluid},\mathcal E_{ext})$ is a
dissipative weak solution and $(r,\vv)$ the strong solution \eqref{strong-class} with the same
initial data. Set
\[
\mathcal Z(\tau):=\Erel(\tau)+\int_{\mathbb T^d}d\mathcal E_{fluid}(\tau)
+\iint_{\overline\Omega}d\mathcal E_{ext}(\tau)\ \ge 0 ,
\]
and let $C>0$ be a generic constant depending only on
$\|\Gradx\vv\|_{L^\infty_{t,x}}, \| r\|_{C^1_{t,x}}\ \underline r,\ \overline r,\ \gamma,\ d,\ \lambda_1,\ \lambda_2, \ c_{d,s}$
--- all finite since $\vv\in C^1_{t,x},\; r\in C^1_{t,x}$ and $0<\underline r\le r\le\overline r$ --- whose value may
change from line to line.

\emph{Step 1: Gr\"onwall inequality.} By Lemma~\ref{lem:reduced}, for a.e. $\tau\in(0,T)$,
\begin{equation}\label{REI3}
	\begin{aligned}
		\mathcal Z(\tau)\ \le\ \Erel(0)
		&- \int_0^\tau\!\!\int_{\mathbb T^d} \vr\lr{\tfrac{\vm}{\vr}-\vv}\otimes\lr{\tfrac{\vm}{\vr}-\vv}:\Gradx\vv \dx \dt
		- \int_0^\tau\!\!\int_{\mathbb T^d} p(\vr\,|\,r)\,\Divx\vv \dx \dt\\
		&+ c_{d,s}\int_0^\tau\!\!\iint_\Omega \T_{\rm ER}[U-V]:\Gradx\vv \dx \,\dz \dt \\
		&
		- \int_0^\tau\!\!\int_{\mathbb T^d} \Gradx\vv : d\mathcal R_{fluid}
		+ c_{d,s}\int_0^\tau\!\!\iint_{\overline\Omega} \Gradx\vv : d\mathcal R_{ext} .
	\end{aligned}
\end{equation}
We bound the last five integrals in the R.H.S of \eqref{REI3}by $\displaystyle C\,\int_0^\tau \mathcal Z(t) \dt$. Since $\Gradx\vv\in L^\infty_{t,x}$ and
$0\le p(\vr\,|\,r)\le C\,P(\vr\,|\,r)$ for $\gamma>1$, $\underline r\le r\le\overline r$
(Remark~\ref{rem:relP}), the convective and pressure terms are controlled by the kinetic and
internal parts of $\Erel$:
\[
\Big|\int_0^\tau\!\!\int_{\mathbb T^d}\vr\lr{\tfrac{\vm}{\vr}-\vv}\otimes\lr{\tfrac{\vm}{\vr}-\vv}:\Gradx\vv \dx \dt  \Big| 
+\Big|\int_0^\tau\!\!\int_{\mathbb T^d}p(\vr\,|\,r)\,\Divx\vv \dx \dt  \Big| 
\ \le\ C\int_0^\tau\Erel \dt .
\]
For the stress term, $|\Gradx W|\le|\Gradxz W|$ gives
$|\T_{\rm ER}[W]:\Gradx\vv|\le C\,z^{1-2s}|\Gradxz W|^2$, so by the isometry \eqref{eq:BI}, we have
\[
\Big| \int_0^\tau\!\!\iint_\Omega\T_{\rm ER}[U-V]:\Gradx\vv \dx\, \dz \dt\Big|
\ \le\ C\,\int_0^\tau\!\!\iint_\Omega z^{1-2s}|\Gradxz(U-V)|^2 \dx\, \dz \dt
\ \le\ C\int_0^\tau\Erel \dt .
\]

This is where retaining the extended energy term in $\Erel$ pays off: the bound is purely algebraic, uniform in $s\in(0,1)$ and independent of $\gamma>1$. For the fluid defect, \eqref{dm-compf} bounds its trace by
$\mathcal E_{fluid}$, and since $\mathcal R_{fluid}\succeq0$ we may pair it with $\Gradx\vv$ through
that trace; for the extension defect, Lemma \ref{lem:Rext-structure} ( more precisely \eqref{Rext-test} bounds the pairing directly by
$\mathcal E_{ext}$. Hence
\[
\Big|\int_0^\tau\!\!\int_{\mathbb T^d}\Gradx\vv:d\mathcal R_{fluid}\Big|
+\Big|c_{d,s}\int_0^\tau\!\!\iint_{\overline\Omega}\Gradx\vv:d\mathcal R_{ext}\Big|
\ \le\ C\int_0^\tau\!\!\int_{\mathbb T^d}d\mathcal E_{fluid}+C\int_0^\tau\!\!\iint_{\overline\Omega}d\mathcal E_{ext}.
\]
Inserting these into \eqref{REI3},
\begin{equation}\label{WS-final}
	\mathcal Z(\tau)\ \le\ \Erel(0)+C\int_0^\tau\mathcal Z(t)\,dt
	\qquad\text{for a.e. }\tau\in(0,T).
\end{equation}

\emph{Step 2: conclusion.} The initial data coincide, so the fluid part of $\Erel(0)$ vanishes; moreover
$U_0$ and $V(0)$ are both the extension of $\vr_0-\bar\vr$, hence $U_0=V(0)$ by
Proposition~\ref{prop:CS_weak}, and the bulk part vanishes too. Thus $\Erel(0)=0$, and
\eqref{WS-final} with $\mathcal Z\ge0$ and vanishing source gives, by Gr\"onwall,
$\mathcal Z\equiv0$ on $[0,T]$. The three nonnegative constituents of $\mathcal Z$ then vanish
separately: $\Erel\equiv0$ and $\mathcal E_{fluid}=\mathcal E_{ext}=0$. The tensor defects follow in
the direction of annihilation: $\operatorname{tr}\mathcal R_{fluid}\le\lambda_1^{-1}\mathcal E_{fluid}=0$
with $\mathcal R_{fluid}\succeq0$ forces $\mathcal R_{fluid}=0$, while
$\mathcal R_{ext}\ll\mathcal E_{ext}=0$ forces $\mathcal R_{ext}=0$. Finally, $\Erel\equiv0$ and
Remark~\ref{rem:coercivity} identify the fields,
\[
\vr=r,\qquad \vm=r\vv,\qquad U=V\qquad\text{a.e. on }[0,T]\times\mathbb T^d .
\]
Hence the dissipative weak solution coincides with the strong one on $[0,T]$; all defects vanish, so
the energy inequality \eqref{eq:ei} reduces to $\mathcal E(\tau)\le\mathcal E(0)$, and since
$(\vr,\vm,U)=(r,r\vv,V)$ is the classical solution, the energy is conserved. 
\end{pf}

	\appendix
	\section{Formal energy identity for Euler-Riesz system}
\label{sec:eng-formal}

We record the formal computation behind \eqref{en_def_ext}, for smooth solutions of \eqref{eq:ERc}--\eqref{eq:ERm}. 

Test the momentum equation \eqref{eq:ERm} with $\vu$. 
Using the following identities 
\begin{align*}
	&\partial_t(\vr\vu)\cdot\vu = \partial_t(\tfrac12\vr|\vu|^2) + \tfrac12|\vu|^2\partial_t\vr,\\ 
	&\Divx(\vr\vu\otimes\vu)\cdot\vu = \Divx(\tfrac12\vr|\vu|^2\vu) + \tfrac12|\vu|^2\Divx(\vr\vu), \\
	&\vu\cdot\Gradx p(\vr)=\Divx(p(\vr)\vu)-p(\vr)\Divx\vu,
\end{align*}
we have
\begin{equation}\label{ap-test}
	\partial_t\big(\tfrac12\vr|\vu|^2\big)
	+ \Divx\big[\big(\tfrac12\vr|\vu|^2 + p(\vr)\big)\vu\big]
	+ \tfrac12|\vu|^2\big(\partial_t\vr + \Divx(\vr\vu)\big)
	- p(\vr)\Divx\vu
	= -\,\vr\,\vu\cdot\Gradx\Phi.
\end{equation}
The third term vanishes by the continuity equation \eqref{eq:ERc}. For the pressure, substitute \eqref{eq:ERc} once more: since $P''(\vr)=p'(\vr)/\vr$,
\begin{equation*}
	\partial_t P(\vr) + \Divx(P(\vr)\vu) = -\,p(\vr)\Divx\vu,
\end{equation*}
so that $-p(\vr)\Divx\vu$ in \eqref{ap-test} is $\partial_t P(\vr) + \Divx(P(\vr)\vu)$. Collecting, the internal energy joins the flux and
\begin{equation*}
	\partial_t\Big(\tfrac12\vr|\vu|^2 + P(\vr)\Big)
	+ \Divx\Big[\big(\tfrac12\vr|\vu|^2 + P(\vr) + p(\vr)\big)\vu\Big]
	= -\,\vr\,\vu\cdot\Gradx \Phi.
\end{equation*}
Integrating over space, we have the following balance
\begin{equation}\label{K-mech}
	\frac{d}{dt}\int_{\mathbb T^d}\Big(\tfrac12\vr|\vu|^2 + P(\vr)\Big) \dx
	= -\int_{\mathbb T^d}\vr\,\vu\cdot\Grad_x\Phi  \dx .
\end{equation}
Before extending, we record the energy in its intrinsic form, in terms of the interaction kernel $K$. Recall $\Phi=K\ast(\vr-\bar\vr)$ and, on $\mathbb T^d$, $\Grad_x(K\ast\vr)=\Grad_x\Phi$.

For the interaction term, $K$ is even, so the bilinear form $(f,g)\mapsto\int f\,(K\ast g)$ is symmetric, and
\begin{equation}\label{K-int}
	\frac{d}{dt}\left[\frac12\int_{\mathbb T^d}(\vr-\bar\vr)\,K\ast(\vr-\bar\vr) \dx\right]
	= \int_{\mathbb T^d}\partial_t\vr\,\big(K\ast(\vr-\bar\vr)\big)  \dx
	= \int_{\mathbb T^d}\partial_t\vr\,\Phi  \dx
	= \int_{\mathbb T^d}\vr\,\vu\cdot\Grad_x\Phi  \dx,
\end{equation}
using $\partial_t\vr=-\Div_x(\vr\vu)$ and integration by parts. Adding \eqref{K-mech} and \eqref{K-int}, the interaction terms cancel and the total energy
\begin{equation}\label{K-eng}
	\mathcal E(t) = \underbrace{\int_{\mathbb T^d}\tfrac12\vr|\vu|^2 \dx}_{\text{kinetic}}
	+ \underbrace{\int_{\mathbb T^d} P(\vr) \dx}_{\text{internal}}
	+ \underbrace{\frac12\int_{\mathbb T^d}(\vr-\bar\vr)\,K\ast(\vr-\bar\vr) \dx\,\dz}_{\text{interaction}}
\end{equation}
is conserved, $\tfrac{d}{dt}\mathcal E(t)=0$.\par 

Now in Extended variable, using  the isometry \eqref{CS-prop1}, it holds 
\begin{equation*}
	\frac12\int_{\mathbb T^d}(\vr-\bar\vr)\,K\ast(\vr-\bar\vr)  \dx
	= \frac{1}{2c_{d,s}}\big\|\vr-\bar\vr\big\|_{\dot H^{-s}}^2
	= \frac{c_{d,s}}2\iint_\Omega z^{1-2s}\,|\Grad_{x,z}U|^2 \dx \, \dz  ,
\end{equation*}
with $\Lambda_0 U=\Phi$. The total energy \eqref{K-eng} thus takes the local form
\begin{equation}\label{en_def_ext}
	\mathcal E(t) = \int_{\mathbb T^d}\Big(\tfrac12\vr|\vu|^2 + P(\vr)\Big) \dx 
	+ \frac{c_{d,s}}2\iint_\Omega z^{1-2s}\,|\Grad_{x,z}U(t)|^2 \dx \,\dz ,
\end{equation}
every term now local in $(x,z)$. \par 

\begin{rmk}[Coercivity in the repulsive case]\label{rem:K-coercive}
	In the repulsive case $K=+C_{d,s}|x|^{-\alpha}$ the interaction term is non-negative. Indeed $\widehat K(k)=c_{d,s}^{-1}|k|^{-2s}\ge0$, so by Plancherel
	\begin{equation}\label{K-post}
		\frac12\int_{\mathbb T^d}(\vr-\bar\vr)\,K\ast(\vr-\bar\vr) \dx
		= \frac12\sum_{k\neq0}\widehat K(k)\,|\widehat\vr(k)|^2
		= \frac{1}{2c_{d,s}}\big\|\vr-\bar\vr\big\|_{\dot H^{-s}(\mathbb T^d)}^2 \ \ge\ 0,
	\end{equation}
	and $\mathcal E$ controls kinetic, internal and interaction energy simultaneously. In the attractive case the same quantity is non-positive, and coercivity must be recovered separately (see the introduction).
\end{rmk}
\begin{rmk}[Comment on the term $ \vr \Grad \Phi$ via Tensor $   \T $] \label{rem:stress-meaning}
	We use the stress structure to give the force meaning. Written via \eqref{T} as $ c_{d,s}\iint_\Omega\mathbb T_{\rm ER}[U]:\Grad_x\pmb{\psi} \dx \dz ,$ with $\mathbb T[U]$ quadratic in $\Grad_{x,z}U$ and hence in $L^\infty_tL^1(\Omega,z^{1-2s})$ by the energy alone, the force is a well-defined element of $L^\infty_tW^{-1,1}(\mathbb T^d)$ --- for every $s\in(0,1)$ and every $\gamma>1$, with no further hypothesis.
\end{rmk}

\section{A discussion on the Caffarelli-Silvestre Extension}	\label{sec:extension-appendix}

This appendix serves two purposes. First, we give the explicit construction of
the extension on the torus $\mathbb{T}^d$. Next, we state a proposition on the
regularity of the extension in the extended variable $z$
(Proposition~\ref{prop:CS_regularity}).

Throughout, $K_\nu$ denotes the modified Bessel function of the second kind of order $\nu$: the
decaying solution of $r^2 K_\nu''+r K_\nu'-(r^2+\nu^2)K_\nu=0$ on $(0,\infty)$, with the two
asymptotics we use
\begin{equation}\label{eq:bessel-asymp}
	K_\nu(r)\sim \tfrac12\Gamma(\nu)\big(\tfrac{2}{r}\big)^{\nu}\quad(r\to0^+)
	\quad \text{and} \quad
	K_\nu(r)\sim \sqrt{\tfrac{\pi}{2r}}\,e^{-r}\quad(r\to\infty).
\end{equation}
so $K_\nu$ is singular of order $r^{-\nu}$ at the origin and decays exponentially at infinity.

\paragraph{Explicit construction on the torus.}
On $\mathbb T^d$ the extension is diagonal in Fourier, in subordinated semigroup form \cite[eq.~(3.4)]{RS2016}, which is how we fix the constant $c_{d,s}$.%
\footnote{Roncal and Stinga write this extension in subordinated heat-semigroup
	form (the torus case of their Theorem~1.1): with $\sigma=2s$, $\tau=z$, and
	boundary datum $g:=\Lambda_0 U$,
	\[
	U(x,z)=\frac{z^{2s}}{4^{s}\Gamma(s)}\int_0^\infty e^{t\Delta}g(x)\,
	e^{-z^2/4t}\,\frac{dt}{t^{1+s}}
	=\sum_{k\neq0}\hat g_k\,e^{ik\cdot x}\,\frac{z^{2s}}{4^{s}\Gamma(s)}
	\int_0^\infty e^{-t|k|^2}e^{-z^2/4t}\,\frac{dt}{t^{1+s}} .
	\]
	Evaluating the $t$-integral by
	$\int_0^\infty t^{\nu-1}e^{-at-b/t}\,dt=2(b/a)^{\nu/2}K_\nu(2\sqrt{ab})$
	($\nu=-s$, $a=|k|^2$, $b=z^2/4$) yields $\psi(|k|z)$, so \eqref{eq:torus-U}
	coincides with \cite[eq.~(3.4)]{RS2016}; the Bessel profile is the closed form
	of their semigroup kernel. Their normalizing constant is $c_{d,s}^{-1}$: we fix
	the constant by the Dirichlet energy \eqref{CS-prop1}, they by the co-normal
	derivative.}
Equivalently, the extension can be written as a single Fourier series in the modes. Since $f=\vr-\bar\vr$ is mean-zero, only nonzero modes appear: writing $\vr-\bar\vr=\sum_{k\neq0}\hat\rho_k e^{ik\cdot x}$,
\begin{equation}\label{eq:torus-U}
	U(x,z)=\sum_{k\neq0}|k|^{-2s}\hat\rho_k\,\psi(|k|z)\,e^{ik\cdot x},
\end{equation}
where $\psi$ is the unique decaying solution of
\begin{equation}\label{eq:profile}
	\psi''+\frac{1-2s}{r}\psi'-\psi=0,\qquad \psi(0)=1,\quad \psi(\infty)=0,
\end{equation}
namely $\psi(r)=\tfrac{2^{1-s}}{\Gamma(s)}r^{s}K_s(r)$; the normalization $\psi(0)=1$ follows from
the small-$r$ asymptotic in \eqref{eq:bessel-asymp}, and for $s=\tfrac12$ it reduces to
$\psi(r)=e^{-r}$. The co-normal condition then fixes the constant. Using $\psi'(r)\sim -\tfrac{2^{1-2s}\Gamma(1-s)}{\Gamma(s)}\,r^{2s-1}$ as $r\to0$, so that $-\lim_{z\to0}z^{1-2s}\partial_z U=\tfrac{2^{1-2s}\Gamma(1-s)}{\Gamma(s)}\sum_{k\neq0}|k|^{2s}\hat\Phi_k e^{ik\cdot x}=\tfrac{2^{1-2s}\Gamma(1-s)}{\Gamma(s)}(-\Delta)^s\Phi$, matching \eqref{CS-ext} gives
\begin{align}\label{constCDS}
	c_{d,s}=\frac{\Gamma(s)}{2^{1-2s}\,\Gamma(1-s)} .
\end{align}

The isometry \eqref{CS-prop1} follows by Parseval identity: $$c_{d,s}\iint_\Omega z^{1-2s}|\Grad_{x,z}U|^2=\sum_{k\neq0}|k|^{2s}|\hat\Phi_k|^2=\|\Phi\|_{\dot H^s}^2=\|f\|_{\dot H^{-s}}^2.$$

\subsubsection*{Higher Sobolev and H\"older regularity of the extension}

The extension lifts the regularity of the boundary datum by $2s$ derivatives. Part~(i) below is a
direct Fourier-side computation, which we detail as it is the admissibility we use in
Lemma~\ref{lem:stress-id}; part~(ii) is the periodic fractional Schauder theory of
\cite[Thm.~1.4]{RS2016}.

\begin{prop}[Regularity lifting]\label{prop:CS_regularity}
	Let $f\in\dot H^{-s}(\mathbb T^d)$ have zero mean, $\int_{\mathbb{T}^d} f\,dx = 0$,
	and let $U=\mathfrak{E}(f)$ be its extension following \eqref{CS-ext}.
	\begin{enumerate}[leftmargin=*]
		\item If $f\in H^{k_0}(\mathbb T^d)$, $k_0\ge0$, then for every multi-index $\alpha$ and
		$m\in\{0,1\}$ with $|\alpha|+m\le k+1$,
		\[
		\|\partial_x^\alpha\partial_z^m U\|_{L^2(\Omega,\,z^{1-2s})}\le C(d,s)\,\|f\|_{H^{k_0}},
		\]
		and in fact $\|\partial_x^\alpha\partial_z^m U\|_{L^2(\Omega,z^{1-2s})}
		\le C(d,s)\,\|f\|_{\dot H^{\,|\alpha|+m-1-s}}$. 
		Moreover, $\Lambda_0 U=(-\Delta)^{-s}f$ gains exactly $2s$ derivatives: since
		$\widehat{\Lambda_0 U}(k)=|k|^{-2s}\hat f(k)$,
		\[
		\|\Lambda_0 U\|_{\dot H^{\sigma}}=\|f\|_{\dot H^{\sigma-2s}}\qquad(\sigma\in\mathbb R).
		\]
		On the mean-zero class the homogeneous and inhomogeneous norms are comparable, so
		$\Lambda_0 U\in H^{k_0+2s}(\mathbb T^d)$ with $\|\Lambda_0 U\|_{H^{k_0+2s}}\le C(d,s)\,\|f\|_{H^{k_0}}$.
		\item If $f \in C^{k_0,\tilde{\beta}}(\mathbb{T}^d)$, $k_0\ge0$ integer, $\tilde{\beta}\in(0,1]$,
		$k_0+\tilde{\beta}+2s\notin\mathbb Z$, then $\Lambda_0 U \in C^{k_0,\tilde{\beta}+2s}(\mathbb{T}^d)$
		and $U\in C^\infty(\mathbb T^d\times(0,\infty))$; cf.\ \cite[Thm.~1.4]{RS2016}.
	\end{enumerate}
\end{prop}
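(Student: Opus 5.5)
Since the extension \eqref{eq:torus-U} is diagonal in Fourier, I will prove part (i) by a direct Parseval computation in the variable $x$, followed by a one-dimensional weighted integral in $z$.

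Write $f=\sum_{k\neq0}\hat f_k e^{ik\cdot x}$. By \eqref{eq:torus-U}, $U=\sum_{k\ne0}|k|^{-2s}\hat f_k\,\psi(|k|z)e^{ik\cdot x}$. For a multi-index $\alpha$ and $m\in\{0,1\}$,
\[
\partial_x^\alpha\partial_z^mU=\sum_{k\neq0}(ik)^\alpha|k|^{m-2s}\hat f_k\,\psi^{(m)}(|k|z)e^{ik\cdot x}.
\]
Parseval in $x$ then gives
\[
\|\partial_x^\alpha\partial_z^mU\|^2_{L^2(\Omega,z^{1-2s})}\le (2\pi)^d\sum_{k\ne0}|k|^{2|\alpha|+2m-4s}|\hat f_k|^2\int_0^\infty z^{1-2s}|\psi^{(m)}(|k|z)|^2\,dz .
\]
Substituting $r=|k|z$, the $z$-integral becomes $|k|^{-2+2s}I_m$, where $I_m:=\int_0^\infty r^{1-2s}|\psi^{(m)}(r)|^2\,dr$. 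Collecting powers of $|k|$ gives the exponent $2(|\alpha|+m-1-s)$, which is exactly $\|f\|^2_{\dot H^{|\alpha|+m-1-s}}$ times $C I_m$.

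The only real step is to show $I_0,I_1<\infty$, using the asymptotics \eqref{eq:bessel-asymp} with $\psi(r)=\tfrac{2^{1-s}}{\Gamma(s)}r^sK_s(r)$.
\begin{itemize}
\item Near $r=0$, $\psi\to1$ and $\psi'\sim -c\,r^{2s-1}$. The integrands are therefore $O(r^{1-2s})$ and $O(r^{1-2s}r^{4s-2})=O(r^{2s-1})$, and both are integrable since $s\in(0,1)$.
\item Near $r=\infty$, $\psi$ and $\psi'$ decay like $r^{s-1/2}e^{-r}$, using $K_s'=-K_{s-1}-\tfrac{s}{r}K_s$, so both integrands are integrable.
\end{itemize}

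To pass from the homogeneous to the inhomogeneous bound, note that on mean-zero functions $|k|\ge1$. Hence $\|f\|_{\dot H^\sigma}\le\|f\|_{H^{k_0}}$ whenever $\sigma\le k_0$, which yields the stated range $|\alpha|+m-1-s\le k_0$; this holds in particular for $|\alpha|+m\le k_0+1$.

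For the trace, $\widehat{\Lambda_0U}(k)=|k|^{-2s}\hat f_k$ because $\psi(0)=1$. The identity $\|\Lambda_0U\|_{\dot H^\sigma}=\|f\|_{\dot H^{\sigma-2s}}$ is then immediate, and comparability of norms again follows from $|k|\ge1$.

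Part (ii) is quoted from \cite[Thm.~1.4]{RS2016} and is not reproved. Interior smoothness of $U$ on $\mathbb T^d\times(0,\infty)$ also follows directly from the series: for $z\ge z_0>0$, the factor $\psi^{(m)}(|k|z)$ decays like $e^{-|k|z_0}$, so all derivatives of the series converge absolutely and uniformly. I expect the main (mild) obstacle to be the $\psi'$ singularity at $r=0$ when $s<1/2$, and checking this integrability carefully is where I would concentrate.
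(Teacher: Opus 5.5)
Your proposal is correct and follows the paper's proof essentially step for step. The shared steps are: Fourier diagonalization of the extension, Parseval in $x$ with the rescaling $r=|k|z$, finiteness of $\int_0^\infty r^{1-2s}|\psi^{(m)}(r)|^2\,dr$ for $m\in\{0,1\}$ from the Bessel asymptotics at $r\to0^+$ and $r\to\infty$, $|k|\ge1$ on nonzero modes for the inhomogeneous bounds, and part (ii) quoted from \cite{RS2016}. Your inequality in the Parseval step (needed since $|k^\alpha|\le|k|^{|\alpha|}$, where the paper writes an equality) and your direct series argument for interior smoothness of $U$ are minor refinements, not a different route.
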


The restriction $m\le1$ in (i) is necessary: $\partial_z^2 U\notin L^2(\Omega,z^{1-2s})$ in general
(for $s\ne\tfrac12$).

\begin{pf}[Proof of (i)]
By \eqref{eq:torus-U}, $\partial_{x_j}$ multiplies the $k$-th mode by $ik_j$ and leaves the profile
$\psi(|k|z)$ untouched, so tangential derivatives commute with the extension and
$\Grad_x\Grad_{x,z}U$ is again diagonal in Fourier. Fix a multi-index $\alpha$ and $m\in\{0,1\}$.
and differentiate \eqref{eq:torus-U} to obtain
\[
\partial_x^\alpha\partial_z^m U
= \sum_{k\neq0} (ik)^\alpha\,|k|^{-2s+m}\,\hat\rho_k\,\psi^{(m)}(|k|z)\,e^{ik\cdot x},
\]
so that, by using Parseval identity in $x$, it holds
\[
\|\partial_x^\alpha\partial_z^m U\|_{L^2(\Omega,z^{1-2s})}^2
= \sum_{k\neq0} |k|^{2|\alpha|-4s+2m}\,|\hat\rho_k|^2
\int_0^\infty z^{1-2s}\,\psi^{(m)}(|k|z)^2\,dz .
\]
The rescaling $r=|k|z$ gives $z^{1-2s}\,dz=|k|^{2s-2}\,r^{1-2s}\,dr$, so the $z$-integral equals
$|k|^{2s-2}A_s^{(m)}$ with $A_s^{(m)}:=\int_0^\infty r^{1-2s}\,\psi^{(m)}(r)^2\,dr$, independent of $k$.
Hence
\begin{equation}\label{eq:bulk-Hk}
	\|\partial_x^\alpha\partial_z^m U\|_{L^2(\Omega,z^{1-2s})}^2
	= A_s^{(m)}\sum_{k\neq0} |k|^{2|\alpha|+2m-2-2s}\,|\hat\rho_k|^2
	= A_s^{(m)}\sum_{k\neq0} |k|^{2(|\alpha|+m-1-s)}\,|\hat\rho_k|^2 .
\end{equation}
The constant $A_s^{(m)}=\int_0^\infty r^{1-2s}\,\psi^{(m)}(r)^2\,dr$ is finite for $m\in\{0,1\}$
and every $s\in(0,1)$. Since $\psi(r)=\tfrac{2^{1-s}}{\Gamma(s)}r^sK_s(r)$ and $K_s$ is smooth and
positive on $(0,\infty)$, the integrand $r^{1-2s}\psi^{(m)}(r)^2$ is continuous on $(0,\infty)$,
so we only check the two ends.
\begin{itemize}[leftmargin=*]
	\item \emph{As $r\to\infty$.} By \eqref{eq:bessel-asymp}, $K_s(r)\sim\sqrt{\pi/2r}\,e^{-r}$,
	so $\psi$ and $\psi'$ decay exponentially; the integrand does too, and is integrable at $\infty$
	for both $m=0,1$.
	\item \emph{As $r\to0^+$.} Here $\psi(r)\to1$ and $\psi'(r)\sim c\,r^{2s-1}$ (from the small-$r$
	expansion of $K_s$), so
	\[
	r^{1-2s}\psi(r)^2\sim r^{1-2s}\quad(m=0),
	\qquad
	r^{1-2s}\psi'(r)^2\sim r^{1-2s}\cdot r^{2(2s-1)}=r^{2s-1}\quad(m=1).
	\]
	The exponents $1-2s$ and $2s-1$ are negatives of each other, and for $s\in(0,1)$ both lie in
	$(-1,1)$; in particular both are $>-1$, so each integrand is integrable at the origin.
\end{itemize}
Hence $A_s^{(m)}<\infty$. By \eqref{eq:bulk-Hk} the right-hand side is
$A_s^{(m)}\|f\|_{\dot H^{|\alpha|+m-1-s}}^2$, and since $|k|^{|\alpha|+m-1-s}\le|k|^{k_0}$ on the
nonzero modes when $|\alpha|+m\le k_0+1$, it is bounded by $A_s^{(m)}\|f\|_{H^{k_0}}^2$.

In particular, taking $|\alpha|+m=2$ (one tangential and one full
derivative), a mean-zero $f\in H^1(\mathbb T^d)$ yields $\Grad_x\Grad_{x,z}U\in L^2(\Omega,z^{1-2s})$
--- exactly the admissibility required in \eqref{stress-id}. Finally, the trace bound
$\|\Lambdaz U\|_{\dot H^\sigma}=\|f\|_{\dot H^{\sigma-2s}}$ is immediate from
$\widehat{\Lambdaz U}(k)=|k|^{-2s}\hat f(k)$. 
\end{pf}

\section*{Concluding remarks}

We close with the scope and limitations of the approach.
\begin{itemize}[leftmargin=*]
	\item \emph{Two divergence-form representations of the Riesz term.} The nonlocal force
	$\varrho\nabla K*\varrho$ can be written as a divergence in more than one way. On $\mathbb R^d$ with
	vanishing far field, Alves et. al \cite[Eq. 1-4]{AlGT2026} use the two-point representation
	$\varrho\nabla K_\alpha*\varrho=\Div S_\alpha(\varrho)$,
	\[
	S_\alpha(\varrho)(x)=\tfrac12\int_0^1\!\!\int_{\mathbb R^d}
	\varrho\big(x+(\theta-1)y\big)\,\varrho(x+\theta y)\,|y|^{\alpha-d-2}\,y\otimes y\,\dd y\,\dd\theta,
	\]
	built directly from the kernel, whereas here the same force is $\Div\!\big(\int_0^\infty
	\mathbb T_{ER}[U]\,dz\big)$, a boundary flux of a local stress in one higher dimension. The two
	tensors have the same divergence --- hence represent the same force and agree against any gradient
	test field --- but are not equal: the same singular interaction is localized through different
	auxiliary variables, a separation vector $y$ for $S_\alpha$ in \cite{AlGT2026} and an extension
	height $z$ for $\mathbb T_{ER}$ here. A systematic comparison, and a full-space version of the
	present approach, seem worth investigating.
	\item \emph{General monotone pressure.} The power law $p(\vr)=a\vr^\gamma$ enters our
	argument only through the convexity of the pressure potential and the proportionality
	$p=(\gamma-1)P$. The analysis extends to a general monotone pressure in the sense of
	Abbatiello, Feireisl and Novotn\'y~\cite{AFN2021}: $p\in C[0,\infty)\cap C^2(0,\infty)$,
	$p(0)=0$, $p'(\vr)>0$ for $\vr>0$, with pressure potential determined by
	$\vr P'(\vr)-P(\vr)=p(\vr)$, $P(0)=0$, and such that $P-\underline a\,p$ and
	$\bar a\,p-P$ are convex for some constants $\underline a,\bar a>0$. As shown in~\cite{AFN2021}, these assumptions imply $P(\vr)\geq a \vr^\gamma$ for some $a>0$ and $\gamma=1+\frac{1}{\bar{a}}$ for $\vr>1$. This two-sided convexity
	is exactly what renders the fluid defect $\mathcal R_{fluid}$ controllable by the energy
	defect $\mathcal E_{fluid}$ (cf.\ \eqref{dm-compf}); the coupling with the extension and the
	nonlocal stress is unaffected, so the relative-energy inequality carries over under this
	weaker hypothesis.
	
	\item \emph{The linear (isothermal) pressure.} The endpoint $p(\vr)=\vr$ ($\gamma=1$) is
	more delicate and is not covered above. Here $P(\vr)=\vr\log\vr$ (up to an affine term),
	convex, so the a priori bound places $\vr$ in the
	Orlicz space $L\log L(\mathbb T^d)$ rather than in $L^\gamma$, $\gamma>1$. The integrability
	gain that $L^\gamma$ supplies for free is then absent, instead the weak convergence of density must be drawn from equi-integrability (de la Vall\'ee-Poussin). Adapting the present framework to
	this case requires this additional work, which we do not pursue here; see Basari\'c \cite{B2021} for
	the linear-pressure theory in the viscous setting.
\end{itemize}
\paragraph{Acknowledgment}
The work is supported by the NAWA ULAM grant BPN/SEL/2025/1/ 00005/U/00001. The author would like to thank Piotr Mucha and Jan Peszek for organizing the reading seminar, which greatly benefited this work. The author also thanks Maja Szlenk for fruitful discussions on this topic.

	\bibliographystyle{abbrv}
	\bibliography{biblionil}
	
	\end{document}